\documentclass{article}
\usepackage{graphicx}

\usepackage{amsmath}
\usepackage{amssymb}
\usepackage[margin=1.5in]{geometry}
\newcommand{\R}{\mathbb{R}}
\newcommand{\Z}{\mathbb{Z}}

\newcommand{\p}{\partial}

\newcommand{\eps}{\varepsilon}
\newcommand{\jbrac}[1]{\langle #1 \rangle}
\newcommand{\ip}[1]{\left(#1\right)}

\newcommand{\les}{\lesssim}
\newcommand{\wkly}{\rightharpoonup}
\newcommand{\supp}{\operatorname{supp}}
\newcommand{\diam}{\operatorname{diam}}
\newcommand{\Real}{\operatorname{Re}}
\newcommand{\Imag}{\operatorname{Im}}

\usepackage{amsthm}
\usepackage{xcolor}
\usepackage{hyperref}

\newtheorem{lem}{Lemma}
\newtheorem{thm}{Theorem}
\newtheorem{prop}{Proposition}
\newtheorem{remark}{Remark}
\newtheorem{cor}{Corollary}
\newtheorem{defn}{Definition}

\usepackage[backend=biber, style=nature, sorting=ynt]{biblatex}

\title{Global Existence and Time Decay for the Vlasov-Hartree System}
\author{Kieran Cavanagh\footnote{Department of Mathematics, Pennsylvania State University, University Park, PA, USA}}

\date{July 2026}

\begin{document}

\maketitle

\begin{abstract}
    The Vlasov-Hartree system is a mean-field model for a mixture of infinitely many interacting bosons and fermions where the bosons are described quantum mechanically and the fermions are described classically. This paper studies the well-posedness and dispersive properties of the Vlasov-Hartree system with initial data of arbitrary size. We prove that the Vlasov-Hartree system is globally well-posed in a low-regularity functional framework where the particle trajectories are meaningfully defined, but which includes discontinuous fermion densities. Moreover, when the interaction between the bosons and fermions is repulsive, we prove that the system exhibits dispersion in the form of time decay estimates for the particle densities and fields. When the interaction is attractive, we show that, at worst, the fields exhibit very mild growth in time.
\end{abstract}

\tableofcontents

\section{Introduction}

\subsection{The Vlasov-Hartree system}

Fermions are the particles comprising most of ordinary matter and obey the Pauli exclusion principle, which means they cannot occupy the same quantum state. Bosons, on the other hand, are particles which may occupy the same quantum state, which enables them to form a Bose-Einstein condensate in which quantum-mechanical behavior can be observed macroscopically. In this paper, we study the dynamics of a zero-temperature gas mixture of fermions and bosons interacting with each other via the interaction potential $V$.\\

In a very recent work, C\'ardenas, Miller, and Pavlovi\'c \cite{cardenas_effective_2025} studied the mean-field behavior of such mixtures, neglecting boson-boson and fermion-fermion interactions, and remarkably identified a new scaling regime in which one can describe the fermions classically and the bosons quantum-mechanically. If $N$ is the number of bosons and $M$ is the number of fermions, they consider the regime

$$\frac{M}{N} = \frac{m_B}{m_F} = \hbar,$$

where $m_B, m_F$ are the masses of the bosons and fermions respectively. In other words, the bosons are much lighter but more numerous, while the fermions are heavy but fewer. Starting from a mean-field model of boson-fermion interactions, they send $\hbar\to 0$ and $N,M\to \infty$ under the above scaling and obtain the following equations, which they call the Vlasov-Hartree system:

\begin{equation}\label{VH}
\begin{split}
    \partial_t f + v\cdot\nabla_x f + E\cdot\nabla_v f = 0, \quad E(t,x) = -\nabla_x V*|\phi(t,x)|^2,\\
    i\partial_t \phi + \tfrac12 \Delta_x \phi = (V*\rho)\phi, \quad \rho(t,x) = \int f(t,x,v)dv.
\end{split}
\end{equation}

Here, $(x,v)\in \R^3\times\R^3$ and $f$ is the phase space density of the fermions, while $\phi$ is the wavefunction for the bosons. Therefore, one can regard the Vlasov-Hartree system as a mean-field model for the dynamics of a Bose-Fermi mixture where the bosons are described quantum mechanically and the fermions are described classically. The force field $E$, which represents the force exerted by the bosons, drives the dynamics of Vlasov equation governing the fermions, while the fermions generate a potential $V*\rho$ which drives the Hartree dynamics of the bosons. We consider the case where $V$ is the Coulomb potential, i.e. 

$$V(x) = \frac{\gamma}{4\pi|x|}, \quad \gamma \in \{-1,1\}.$$

The factor $\gamma$ designates the sign of the interaction; $\gamma=1$ corresponds to repulsive interactions and $\gamma=-1$ corresponds to attractive interactions.\\

As mentioned, we assume the mixture is at zero temperature. Many of the interesting features of Bose and Fermi gases (superfluidity, superconductivity, etc.) occur at low temperatures, so it is natural to study such systems in this regime. Moreover, one expects $f$ which minimizes energy under the constraints of fixed mass and bounded fermion density $0\leq f\leq 1$ to be an indicator function; see for instance \cite{fournais_semi-classical_2018} for the case of a Fermi gas. Crucially, this means $f$ may be discontinuous, so we situate our results in a low-regularity setting in which only integrability assumptions are imposed on the initial fermion density. This introduces a number of mathematical challenges that are of independent interest and are discussed more below.\\ 

We also mention the closely related Vlasov-Poisson and Hartree equations, which are fundamental models in kinetic theory and dispersive PDE and model the mean-field behavior of plasmas (or stellar dynamics) and Bose-Einstein condensates respectively. With the same notation for $\rho$ and $V$ as above, the Vlasov-Poisson and Hartree equations respectively take the form

$$\p_t f + v\cdot\nabla_x f + E\cdot\nabla_vf = 0, \quad E = -\nabla_x V *\rho,$$
$$i\p_t\phi + \frac12\Delta\phi = (V*|\phi|^2)\phi.$$

Note that one formally obtains the Vlasov-Hartree system by coupling Vlasov-Poisson and Hartree and swapping the roles of the fermion density $\rho$ and the boson density $|\phi|^2$ in the nonlinearities. 

\subsection{Main results}

The goal of this paper is to study the well-posedness and dispersive properties of solutions to the Vlasov-Hartree system with large initial data.\\

\textbf{a. Global existence.} Since we seek solutions where the fermionic density $f$ may be discontinuous, we need to work with weak solutions. However, it is desirable to have enough regularity on $\phi$ to ensure the particle trajectories associated to the Vlasov equation is well-defined, and moreover that $\phi$ solves the Hartree equation in the mild sense (rather than merely in the distributional sense). Moreover, to ensure the weak solution is physical we look for solutions with conserved mass and energy. These requirements lead to the following notion of solution. 

\begin{defn}
    We say $(f,\phi)$ is a Lagrangian weak-mild solution to (\ref{VH}) on $[0,T]$ if 
    \begin{itemize}
        \item $(f,\phi)$ solves (\ref{VH}) in the sense of distributions;
        \item $(f,\phi)\in L^\infty([0,T];L^1_{x,v}\cap L^\infty_{x,v})\times C([0,T];H^s_x)$ for some $s>3/2$ and $f(t)$ is compactly supported for every $t\in [0,T]$;
        \item $\phi$ is a mild solution to the Hartree equation in (\ref{VH}), i.e. 
        $$\phi(t) = e^{it\Delta/2}\phi(0) - i\int_0^t e^{i(t-s)\Delta/2}[(V*\rho)\phi](s)ds;$$
        \item Mass, probability, and energy are conserved for almost every $t\in [0,T]$, i.e.
        $$\iint f(t,x,v)dxdv = \iint f(0,x,v)dxdv, \quad \int |\phi(t,x)|^2 dx = \int |\phi(0,x)|^2dx,$$
        $$\mathcal{E}(t) := \frac12 \int |\nabla_x\phi(t)|^2 dx + \frac12 \iint |v|^2 f(t)dxdv + \int (V*\rho(t))|\phi(t)|^2 dx = \mathcal{E}(0).$$
     \end{itemize}
\end{defn}

\begin{remark}
    As Lemma \ref{fieldbds} shows, the assumption $\phi\in H^s_x$ for $s>3/2$ implies that the flow map $\Phi_t:\R^6 \to \R^6$ defined by $\Phi_{t}(x,v) = (X(t,0,x,v),V(t,0,x,v))$, where
        \begin{gather}
            \begin{cases}
                \tfrac{d}{ds}X(s,t,x,v)=V(s,t,x,v) & X(t,t,x,v) = x\\
                \tfrac{d}{ds}V(s,t,x,v)=E(s,X(s,t,x,v)) & V(t,t,x,v) = v
            \end{cases}
        \end{gather}
        is a well-defined Lipschitz (in fact, $C^{1,\alpha}$ for $\alpha$ small) diffeomorphism and $f$ is constant along characteristics:
        $$f(s,X(s,t,x,v),V(s,t,x,v))=f_0(x,v) \quad \forall s,t\in[0,T], \ (x,v)\in \R^3\times \R^3,$$
        where $f_0 = f|_{t=0}$. This is the primary purpose of the regularity assumption on $\phi$ and is the reason we call such solutions Lagrangian. It also gives the solution formula
        $$f(t) = f_0 \circ\Phi^{-1}_t.$$
\end{remark}

Our first result shows that, given initial data of possibly large size, there exists a unique global Lagrangian weak-mild solution to (\ref{VH}).

\begin{thm}[Global existence]\label{globalex}
    Let $f_0\in L^\infty_{x,v}$ with compact support and $\phi_0\in H^s_x$ for some $s\in (3/2,2)$, and assume $\gamma\in \{-1,1\}$. Then for any $T<\infty$ there exists a unique Lagrangian weak-mild solution to (\ref{VH}) on $[0,T]$ with initial data $(f_0,\phi_0)$.
\end{thm}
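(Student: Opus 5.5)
We will argue in two stages: local well-posedness by a contraction argument in Lagrangian variables, and then a continuation argument driven by a priori bounds coming from the conservation laws. The solution formula $f(t)=f_0\circ\Phi_t^{-1}$ controls the fermions. Because $\Phi_t$ is measure preserving, $\|f(t)\|_{L^1\cap L^\infty}=\|f_0\|_{L^1\cap L^\infty}$, so $\|\rho(t)\|_{L^\infty}\les \|f_0\|_{L^\infty}R(t)^3$, where
$$R(t)=\sup\{|v|:(x,v)\in\supp f(t)\}.$$
For the bosons, $E=-\nabla V*|\phi|^2$ satisfies $\|E\|_{W^{1,\infty}}\les \|\phi\|_{H^s}^2$ for $s>3/2$ by Lemma \ref{fieldbds}. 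For the Hartree equation, $V*\rho$ and $\nabla(V*\rho)$ are bounded by $\|\rho\|_{L^1}$ and $\|\rho\|_{L^\infty}$ via Hardy--Littlewood--Sobolev and the standard splitting of the kernel near and far from the origin. Since $s<2$, we only need to estimate $\|(V*\rho)\phi\|_{H^s}$, and $V*\rho$ is $C^{1,\alpha}$ but not better for discontinuous $\rho$. We will use a fractional Leibniz estimate in the form
$$\|(V*\rho)\phi\|_{H^s}\les \|V*\rho\|_{W^{1,\infty}}\|\phi\|_{H^s}+\|D^s(V*\rho)\|_{L^p}\|\phi\|_{L^q},$$
together with $\|D^s(V*\rho)\|_{L^p}=\|D^{s-2}\rho\|_{L^p}$. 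The latter is controlled by HLS in terms of $\|\rho\|_{L^1\cap L^\infty}$ since $s-2<0$.

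\textbf{Local existence.} Iterate $\phi^{n}\mapsto E^n\mapsto \Phi^n\mapsto f^{n+1}=f_0\circ(\Phi^n_t)^{-1}\mapsto \phi^{n+1}$ via the Duhamel formula. The $H^s$ bounds and $R$ bounds persist on a short time depending on $\|\phi_0\|_{H^s}$ and $R(0)$. For contraction we should not measure $f$ in $L^\infty$ difference norms, since $f$ is discontinuous. Instead we compare characteristics, in the spirit of Loeper's argument. The key estimate is
$$\|\rho^1-\rho^2\|_{\dot H^{-1}}\les \|f_0\|_{L^\infty}^{1/2}R^{3/2}\,\|\Phi^1-\Phi^2\|_{L^\infty},$$
via Wasserstein-type duality, which gives $\|V*(\rho^1-\rho^2)\|_{\dot H^1}$ and hence control of $\phi^1-\phi^2$ in $L^2$ or $H^1$. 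We run the contraction in the weaker norm $\|\phi\|_{C_tH^1}+\sup_t\|\Phi_t\|_{L^\infty(\supp f_0)}$, with $\|E^1-E^2\|_{L^\infty}\les\|\phi^1-\phi^2\|_{H^1}(\|\phi^1\|_{H^1}+\|\phi^2\|_{H^1})$. Uniform $H^s$ bounds pass to the limit by weak compactness. The same estimate yields uniqueness within the class of Lagrangian solutions. Conservation of mass and probability is immediate. Energy conservation will be checked by regularizing $f_0$ and passing to the limit using the stability estimate.

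\textbf{Global bounds.} This is the main obstacle. Energy conservation bounds $\|\nabla\phi\|_{L^2}$ and $\iint|v|^2f$: for $\gamma=1$ the interaction term is nonnegative, and for $\gamma=-1$ we have
$$\Big|\int (V*\rho)|\phi|^2\Big|\les \|\rho\|_{L^{6/5}}\,\|\phi\|_{L^{12/5}}^2.$$
Here $\|\rho\|_{L^{5/3}}\les \|f_0\|_{L^\infty}^{2/5}(\iint|v|^2f)^{3/5}$ by the usual kinetic interpolation, and Gagliardo--Nirenberg absorbs these into the kinetic terms with a lower power, so $\mathcal{E}$ controls $H^1\times$ kinetic energy for any size. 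We then close a bootstrap on $\|\phi\|_{H^s}$ and $R(t)$:
$$R(t)\le R(0)+\int_0^t\|E\|_{L^\infty}\les R(0)+t\,\|\phi\|_{H^1}^2,$$
since $\|\nabla V*|\phi|^2\|_{L^\infty}\les\|\phi\|_{L^2}\|\phi\|_{H^1}$. So $R$ grows at most linearly, and $\|\rho\|_{L^\infty}\les(1+t)^3$. The Duhamel estimate combined with the Leibniz bound above is linear in $\|\phi\|_{H^s}$ with coefficient polynomial in $t$, and Gronwall gives finite growth on any $[0,T]$. Hence the local solution extends to $[0,T]$.
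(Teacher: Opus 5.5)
Your route differs from the paper's, and it is sound once one step is repaired. That step is the crux of your global argument. The bound $\|\nabla V*|\phi|^2\|_{L^\infty}\les\|\phi\|_{L^2}\|\phi\|_{H^1}$ is false: for $\phi_\lambda=\phi(\lambda\,\cdot)$ with $\lambda\to\infty$, the left side scales like $\lambda^{-1}$ and the right side like $\lambda^{-2}$.

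It also cannot come from HLS or the near/far kernel splitting you use for $V*\rho$. Knowing only $\phi\in H^1$ gives $|\phi|^2\in L^1\cap L^3$, and $L^3\to L^\infty$ is exactly the failing endpoint for $|x|^{-2}*$. This is why the paper goes through Lemma \ref{loginterp} and pays a factor $\ln(e+\|\phi\|_{H^s})$. What is true, and all you actually use, follows from $\bigl||\phi_1|^2-|\phi_2|^2\bigr|\le|\phi_1-\phi_2||\phi_1+\phi_2|$, Cauchy--Schwarz, and Hardy's inequality $\int|u(y)|^2|x-y|^{-2}\,dy\le 4\|\nabla u\|_{L^2}^2$:
\[
\|E_1-E_2\|_{L^\infty}\le\frac{1}{\pi}\|\nabla(\phi_1-\phi_2)\|_{L^2}\|\nabla(\phi_1+\phi_2)\|_{L^2},
\qquad
\|E\|_{L^\infty}\le\frac{1}{\pi}\|\nabla\phi\|_{L^2}^2 .
\]
State this explicitly. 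Once you do, your linear bound on $R(t)$ from the energy, the contraction, and uniqueness all go through.

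Here is how your route compares with the paper's.

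The paper mollifies $V$ and the data, and proves $\eps$-uniform $H^s$ and velocity-support bounds via the logarithmic endpoint estimate and a nonlinear Gronwall inequality. It then needs genuine compactness: an averaging lemma gives $\rho^\eps\to\rho$ in $L^1_{t,x}$ globally in space, which is what lets the mild formulation and the energy pass to the limit. Uniqueness is proved separately by an Osgood argument. This is necessary there because the paper measures the potential difference in $W^{1,\infty}$ and the field difference in $L^\infty$, and both estimates are only log-Lipschitz.

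You instead iterate directly in Lagrangian variables in the weak norm $C_tH^1\times L^\infty(\supp f_0)$. You measure the potential difference $w$ in $\dot H^1$ through Loeper's $\dot H^{-1}$--$W_2$ estimate, and put the extra regularity on $\phi$ via
\[
\|\nabla(w\phi)\|_{L^2}\le\|\nabla w\|_{L^2}\|\phi\|_{L^\infty}+\|w\|_{L^6}\|\nabla\phi\|_{L^3},
\]
which is where $s>3/2$ enters. Every link in the chain is then Lipschitz. So existence, uniqueness and stability come from a single contraction closed by plain Gronwall. The global bound is also immediate: $R(t)\le R(0)+Ct$, hence $\|\rho\|_{L^\infty}\les\jbrac{t}^3$, and a linear Gronwall inequality controls $\|\phi\|_{H^s}$. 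The same Hardy bound would also remove the logarithm from the paper's attractive-case estimate $Q(t)\les\jbrac{t}\ln\jbrac{t}$.

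Your route has two costs. First, it relies on Loeper's estimate in place of elementary compactness. Second, the energy identity still needs an approximation step, which you only sketch. In that step the data differ ($f_0^\eps\neq f_0$), so your stability estimate, which compares two flows acting on the same $f_0$, needs an extra term coming from $f_0^\eps-f_0$. This term is harmless: it tends to $0$ in $L^{6/5}\subset\dot H^{-1}$ by $L^1$ convergence and uniform support bounds. You also need propagation of regularity for smooth data to justify the formal energy computation.
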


\begin{remark}
    We stress that our notion of solution is stronger than a typical weak solution to a Vlasov-type system due to the well-definedness of the Lagrangian flow. Indeed, the solution is unique, energy is conserved, and most importantly, if $f_0 \in C^1_c$, then the method of characteristics shows that $f$ solves Vlasov in the classical sense. In this sense, Theorem \ref{globalex} bears more resemblance to results concerning global well-posedness of classical solutions to Vlasov-type systems than of weak solutions to such systems. 
\end{remark}

\begin{remark}
    We require $s<2$ in Theorem \ref{globalex} since we cannot propagate higher regularity for $\phi$ without assuming more regularity on $f$. For instance, if $f_0\in C^{k,\alpha}$, it should be possible to prove an analogous result for $s>3/2+k+\alpha$.
\end{remark}

\textbf{b. Asymptotic behavior.} Once global well-posedness is established, a natural question is how solutions behave as $t\to\infty$. Our next result concerns two key facets of asymptotic behavior:
\begin{enumerate}
    \item Decay in time of the boson and fermion densities and associated fields, which reflects the amount of dispersion in the system;
    \item Growth in time of top-order quantities, which reflects the degree to which the solution may become less regular as $t\to\infty$ and illustrates the strength of the conservation laws.
\end{enumerate}

In our functional framework, the obvious top-order norm for $\phi$ is the $H^s$ norm. However, the appropriate top order norm for $f$ is less clear. Borrowing from the analysis of the Vlasov-Poisson system, we introduce the \emph{velocity support}
$$Q(t) = \sup\{|v|:\text{there exists } \tau\in [0,t], \ x\in \R^3 \text{ such that } f(\tau,x,v)\neq 0 \}.$$
This controls the size of the largest velocities of the fermions, and is well-known to be sufficient to continue the solution for the Vlasov-Poisson and Vlasov-Maxwell systems and similar models. The proof of Theorem \ref{globalex} shows that control of $Q(t)$ is also sufficient to continue the solution in the present case. Moreover, we have the trivial estimate $\|\rho(t)\|_{L^\infty_x}\les Q(t)^3$, so $Q(t)$ controls the ``highest order" norm associated to $\rho$ within our functional framework.\\

The next result shows that the system exhibts dispersion when the interaction is repulsive, in the form of $L^p$ decay estimates for $\rho$ and $\phi$. In particular, this leads to relatively slow growth of the top-order norms. Moreover, when the interaction is attractive, we can show that the top-order norms grow at a mild algebraic rate, which illustrates the strength of the conservation laws.

\begin{thm}[Growth/decay estimates]\label{growthdecaybds}
Let $f,\phi$ be solutions to the Vlasov-Hartree system (\ref{VH}) arising from initial data satisfying the hypotheses of Theorem \ref{globalex}, and assume in addition that $x\phi_0\in L^2$. Then\\

    (i) In the repulsive case $\gamma=1$, one has the following decay estimates:
    $$\|\rho(t)\|_{L^{5/3}}\les \jbrac{t}^{-3/5}, \quad \|\phi(t)\|_{L^6}\les \jbrac{t}^{-1/2},$$
    which lead to
    $$Q(t)\les \ln^2\jbrac{t}, \quad \|\phi(t)\|_{H^s} \les \jbrac{t}^{\frac{s}{3}}\ln^{\frac{10s}{3}-4}\jbrac{t}.$$
    (ii) In the attractive case $\gamma = -1$, the following growth bounds hold:
    $$Q(t) \les \jbrac{t}\ln\jbrac{t}, \quad \|\phi(t)\|_{H^s} \les \jbrac{t}^{\frac{5s}{3}-1}\ln^{\frac{5s}{3}-2}\jbrac{t}.$$
\end{thm}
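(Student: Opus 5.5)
The decay estimates in (i) will come from a pseudo-conformal (virial-type) identity that combines the Vlasov and Hartree parts. The growth bounds will then follow by feeding the decay into the characteristic equations and into an $H^s$ energy estimate.

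\textbf{Step 1: conformal energy.} I will introduce the pseudo-conformal energy
$$\mathcal{C}(t) = \frac12\|(x+it\nabla)\phi\|_{L^2}^2 + \frac12\iint |x-tv|^2 f\,dxdv + t^2\int (V*\rho)|\phi|^2dx.$$
It is finite at $t=0$ because $x\phi_0\in L^2$ and $f_0$ is compactly supported. I will compute $\frac{d}{dt}\mathcal{C}$, first formally and then justified via the Lagrangian flow and a regularization of $f_0$ using the well-posedness from Theorem \ref{globalex}. Because $V$ is homogeneous of degree $-1$, the usual computation should give
$$\frac{d}{dt}\mathcal{C}(t) = t\int (V*\rho)|\phi|^2dx.$$
When $\gamma=1$ this integral is nonnegative. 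I will therefore compare with $t^2\int(V*\rho)|\phi|^2$ in a modified quantity, for instance $\mathcal{C}$ with the cross term weighted by $t^2$ versus $t$, to conclude that $\frac12\|(x+it\nabla)\phi\|^2+\frac12\iint|x-tv|^2f$ grows at most like $\jbrac{t}$ and possibly stays bounded.

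From $\|(x+it\nabla)\phi\|_{L^2}\les 1$, the identity $(x+it\nabla)\phi = it e^{i|x|^2/2t}\nabla(e^{-i|x|^2/2t}\phi)$ and Sobolev give $\|\phi\|_{L^6}\les t^{-1}\|(x+it\nabla)\phi\|_{L^2}$. If the bound is only $\jbrac{t}^{1/2}$, this yields $\jbrac{t}^{-1/2}$. For $\rho$, the standard kinetic interpolation controls $\rho$ by $\|f\|_{L^\infty}$ and $t^{-2}\iint|x-tv|^2 f$, which gives $\|\rho\|_{L^{5/3}}\les t^{-6/5}\big(\iint|x-tv|^2f\big)^{3/5}$. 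With growth $\les t$ this is $t^{-3/5}$. I expect the honest rate to be exactly these, matching linear-growth conformal energy.

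\textbf{Step 2: bound on $Q(t)$.} I will bound $|\dot V|\le \|E(t)\|_{L^\infty}$ with $E=-\nabla V*|\phi|^2$. I will estimate $\|E\|_{L^\infty}$ by splitting the kernel $|x|^{-2}$ near and far, or by interpolating $L^6$ decay of $\phi$ against $H^s$ bounds. Using $\|\phi\|_{L^6}^2\les t^{-1}$ and $\|\,|\phi|^2\|_{L^\infty}\les \|\phi\|_{H^s}^2$, with a logarithmic cutoff in the splitting, gives roughly $\|E\|_{L^\infty}\les t^{-1}\ln(\ldots)$. Integrating then gives powers of $\ln t$.

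Because the $H^s$ bound itself depends on $Q$, I will close a bootstrap. First I will get the $H^s$ estimate from the Duhamel formula and fractional Leibniz:
$$\frac{d}{dt}\|\phi\|_{H^s}\les \|\nabla(V*\rho)\|_{W^{s-1,\infty}\text{-ish}}\|\phi\|_{H^s}.$$
Here $\rho\in L^\infty$ with $\|\rho\|_{L^\infty}\les Q^3$, and $V*\rho\in C^{1,\alpha}$ only, since $s<2$. The estimate must therefore be done carefully via a commutator for $|\nabla|^s$ with $V*\rho$, interpolating between $\|\rho\|_{L^{5/3}}$ and $\|\rho\|_{L^\infty}\les Q^3$. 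Tracking exponents, I expect a bound of the form $\|\phi\|_{H^s}\les \jbrac{t}^{s/3}\,\mathrm{polylog}$. Substituting back closes $Q\les\ln^2\jbrac{t}$.

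\textbf{Step 3: attractive case and main obstacle.} For $\gamma=-1$ there is no conformal decay. Instead, conservation of energy plus Hardy–Littlewood–Sobolev bounds the potential term by $\|\rho\|_{L^{5/3}}\|\phi\|_{L^2}^{2-\theta}\|\nabla\phi\|^{\theta}$ type quantities. This gives uniform bounds on $\|\nabla\phi\|_{L^2}$ and the kinetic energy. Then $E$ is bounded in $L^\infty$ up to a logarithm using $H^1$ together with the $H^s$ norm, which gives $Q\les t\ln t$. The same $H^s$ Gronwall argument as in Step 2 then yields the stated algebraic growth.

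The main obstacle is the $H^s$ propagation estimate with only $\rho\in L^1\cap L^\infty$. The loss must be only logarithmic in $Q$ and in the fields, which requires a sharp log-Lipschitz/Brezis–Gallouet-type bound for $\nabla V*\rho$. Getting the exact exponents $s/3$ and $\frac{10s}{3}-4$ will hinge on that interpolation.
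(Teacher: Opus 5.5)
Your Step 1 follows the paper, but the closing step should be stated precisely. Since the first two terms of $\mathcal C$ are nonnegative, $t^2P(t)\le\mathcal C(t)$. So for $t\ge1$ you get $\mathcal C'(t)=tP(t)\le\mathcal C(t)/t$, hence $\mathcal C(t)\le\mathcal C(1)\,t$. Then $P\ge0$ gives the linear bound on $\|(x+it\nabla)\phi\|_{L^2}^2+\iint|x-tv|^2f\,dxdv$; boundedness is neither available nor needed.

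\textbf{The gap: the $H^s$ estimate in Step 2.} Both growth statements rest on it. Your inequality $\frac{d}{dt}\|\phi\|_{H^s}\les\|\nabla(V*\rho)\|_{W^{s-1,\infty}}\|\phi\|_{H^s}$ keeps the top-order norm on the right. The available bound on the coefficient is not integrable in time: already in the repulsive case, $\|\nabla V*\rho\|_{L^\infty}\les\|\rho\|_{L^{5/3}}^{5/9}\|\rho\|_{L^\infty}^{4/9}\les\jbrac{t}^{-1/3}Q^{4/3}$. So Gronwall gives only $\ln\|\phi(t)\|_{H^s}\les \jbrac{t}^{2/3}Q(t)^{4/3}$. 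Inserted into $\|E\|_{L^\infty}\les\|\phi\|_{L^6}^2\ln(e+\|\phi\|_{H^s}^2/\|\phi\|_{L^6}^2)$, this yields only $\|E\|_{L^\infty}\les\jbrac{t}^{-1/3}Q^{4/3}$. The resulting integral inequality for $Q$ is superlinear and gives no global-in-time bound, let alone $Q\les\ln^2\jbrac{t}$ or $Q\les\jbrac{t}\ln\jbrac{t}$. The rate $\jbrac{t}^{s/3}$ you ``expect'' is never derived.

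Your diagnosis of the obstacle is also off. The $H^s$ bound needs no log-Lipschitz estimate on $\nabla V*\rho$, and it loses \emph{algebraically} in $Q$, through $\|\rho\|_{L^\infty}\les Q^3$. The only logarithm is in the bound for $E=-\nabla V*|\phi|^2$. The bootstrap closes precisely because $E$ depends on $\|\phi\|_{H^s}$ logarithmically, while $\|\phi\|_{H^s}$ depends on $Q$ and $t$ algebraically.

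\textbf{The missing idea: no Gronwall at top order.} In $\frac{d}{dt}\|\phi\|_{\dot H^s}^2=2\Imag\langle|\nabla|^s[(V*\rho)\phi],|\nabla|^s\phi\rangle$, the paper writes $|\nabla|^s=R\cdot|\nabla|^\delta\nabla$ with $\delta=s-1$ and $R$ the Riesz transform. It then expands $|\nabla|^\delta\nabla[(V*\rho)\phi]$ using the fractional commutator $C^\delta(w_1,w_2)=|\nabla|^\delta(w_1w_2)-w_1|\nabla|^\delta w_2$ and Li's commutator estimate.
\begin{itemize}
\item The term with all derivatives on $\phi$ is $\Imag\langle(V*\rho)|\nabla|^\delta\nabla\phi,|\nabla|^\delta\nabla\phi\rangle=0$.
\item Every remaining term is bounded by the energy-controlled $\|\phi\|_{H^1}$ times one of $\||\nabla|^\delta V*\rho\|_{L^\infty}$ or finite-$L^p$ norms of $\nabla V*\rho$ and $|\nabla|^\delta\nabla V*\rho$.
\item All of these interpolate between $\|\rho\|_{L^{5/3}}$ and $\|\rho\|_{L^\infty}$ with the same exponents.
\end{itemize}
This gives
\[
\|\phi(t)\|_{\dot H^s}\les 1+\int_0^t\|\rho(\tau)\|_{L^{5/3}}^{\frac{15-5s}{9}}\,Q(\tau)^{\frac{5s-6}{3}}\,d\tau .
\]
Hence $\|\phi\|_{H^s}\les Q^{\frac{5s}{3}-2}\jbrac{t}^{s/3}$ in the repulsive case (using $\|\rho\|_{L^{5/3}}\les\jbrac{t}^{-3/5}$), and $\|\phi\|_{H^s}\les Q^{\frac{5s}{3}-2}\jbrac{t}$ in the attractive case.

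The exact exponents require that only $\delta<1$ derivatives of $V*\rho$ land in $L^\infty$. A standard Kato--Ponce commutator puts $\|\nabla(V*\rho)\|_{L^\infty}$ in front of $\|\phi\|_{\dot H^{s-1}}$. That would also remove the exponential and still give the $Q$ bounds, but only $\jbrac{t}^{2/3}Q^{4/3}$ (resp. $\jbrac{t}Q^{4/3}$), which misses the stated $H^s$ exponents.

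With the correct bound, $\|E\|_{L^\infty}\les\jbrac{t}^{-1}\big(\ln(e+t)+\ln(e+Q)\big)$ (resp. $\ln(e+t)+\ln(e+Q)$). A nonlinear Gronwall argument in $Q(t)\le k+\int_0^t\|E(\tau)\|_{L^\infty}d\tau$ then gives $Q\les\ln^2\jbrac{t}$ (resp. $\jbrac{t}\ln\jbrac{t}$). Substituting back yields the stated $H^s$ rates.
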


Finally, as a simple byproduct of the proof of these estimates and interpolation, we get the following growth/decay estimates for the densities and fields. In particular, this shows that the zero solution is a global $L^p$ attractor in the repulsive case.

\begin{cor}\label{moregrowthdecaybds}
There holds
    \begin{align*}
    \|E(t)\|_{L^\infty} &\les
        \begin{cases}
            \jbrac{t}^{-1}\ln\jbrac{t} & \gamma = 1,\\
            \ln\jbrac{t} & \gamma=-1,
        \end{cases}\\
        \|V*\rho(t)\|_{L^\infty} &\les 
        \begin{cases}
            \jbrac{t}^{-1/2} & \gamma=1,\\
            1 & \gamma=-1.
        \end{cases}
    \end{align*}
    Also, in the repulsive case $\gamma=1$, we get 
    $$\|\phi(t)\|_{L^\infty}\les \jbrac{t}^{\frac{3-2s}{6(s-1)}}\ln^{\frac{5s-6}{s-1}}\jbrac{t}$$
    and
    \begin{align*}
    \lim_{t\to\infty}\|\rho(t)\|_{L^p} = 0, \quad \lim_{t\to\infty}\|\phi(t)\|_{L^q} = 0,
    \end{align*}
    whenever $1<p<\infty$, $2<q\leq \infty$.
\end{cor}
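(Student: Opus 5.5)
The corollary follows from the bounds in Theorem \ref{growthdecaybds} together with Hardy--Littlewood--Sobolev and Gagliardo--Nirenberg type interpolation. The recurring tool is the pointwise splitting of a Coulomb-type convolution. For a density $g\ge0$ and kernel $|x|^{-k}$ ($k=1,2$) we cut at a radius $R$:
$$\int \frac{g(y)}{|x-y|^k}dy \les \|g\|_{L^\infty}R^{3-k} + \|g\|_{L^p}R^{3/p'-k},$$
with $3/p'>k$ (i.e.\ $p>3/(3-k)$, so $p>3/2$ for $k=1$ and $p>3$ for $k=2$), and then optimize in $R$. Where a logarithm appears, it will come from a three-region split: $L^\infty$ near, $L^1$ far, and a dyadic middle range.

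\textbf{Bounds on $V*\rho$.} Apply the splitting with $k=1$, using $\|\rho\|_{L^1}=$ const, $\|\rho\|_{L^{5/3}}$, and $\|\rho\|_{L^\infty}\les Q^3$. Cleaner is interpolation alone: $\|V*\rho\|_{L^\infty}\les \|\rho\|_{L^{p_1}}^{\theta}\|\rho\|_{L^{p_2}}^{1-\theta}$ for $p_1<3/2<p_2$.
\begin{itemize}
\item In the attractive case, energy conservation controls $\iint|v|^2f$, and the standard kinetic interpolation gives $\|\rho\|_{L^{5/3}}\les1$. But $5/3$ is only barely above $3/2$, so a second, larger exponent is needed: either $Q^3$ or a higher moment. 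The cleaner route is to write $V*\rho$ against $|\phi|^2$ in the energy. The intended route is probably the splitting with the $L^{5/3}$ bound near and the $L^1$ bound far. That fails ($3/p'=6/5>1$, fine for near but the far part needs $R^{-1}$). So use $L^{5/3}$ at $|x-y|<R$ and $L^1$ outside: $R^{1/5}\|\rho\|_{5/3}+R^{-1}\|\rho\|_1$, which is bounded at $R=1$.
\item In the repulsive case, the same split with $\|\rho\|_{5/3}\les\jbrac t^{-3/5}$ gives $R^{1/5}\jbrac t^{-3/5}+R^{-1}$. Optimizing gives $R=\jbrac t^{1/2}$ and hence $\jbrac t^{-1/2}$.
\end{itemize}

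\textbf{Bounds on $E$.} Here $E=-\nabla V*|\phi|^2$ has kernel $|x|^{-2}$. Using $\|\phi\|_{L^2}$ const and $\|\phi\|_{L^6}$, note that $|\phi|^2\in L^3$ sits exactly at the critical exponent, which is the source of the logarithm.
\begin{itemize}
\item Take a near region $|x-y|<r$ controlled by $\||\phi|^2\|_{L^\infty}\, r$, where $\|\phi\|_{L^\infty}^2\les\|\phi\|_{H^s}^{a}\|\phi\|_{L^6}^{b}$ grows at most polynomially by Theorem \ref{growthdecaybds}.
\item Take a middle region $r<|x-y|<R$ controlled by Hölder with $L^3$: $\|\phi\|_{L^6}^2\ln(R/r)$.
\item Take a far region controlled by $\|\phi\|_{L^2}^2R^{-2}$.
\end{itemize}
With $r=\jbrac t^{-N}$ and $R=\jbrac t^{1/2}$ (respectively $R=1$), this yields $\jbrac t^{-1}\ln\jbrac t$ in the repulsive case. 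In the attractive case it yields $\ln\jbrac t$, using $\|\phi\|_{L^6}\les\|\nabla\phi\|_{L^2}\les1$ by energy conservation, after absorbing the potential term via the $V*\rho$ bound.

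\textbf{Remaining estimates and the main obstacle.} The $L^\infty$ bound for $\phi$ is Gagliardo--Nirenberg between $L^6$ and $\dot H^s$:
$$\|\phi\|_{L^\infty}\les\|\phi\|_{L^6}^{1-\theta}\|\phi\|_{H^s}^{\theta}, \qquad \theta=\frac{1/2}{s-1},$$
since $\dot H^s\subset$ scaling $3/2-s$, $L^6$ scaling $-1/2$, and $0=(1-\theta)(-1/2)+\theta(s-3/2)$. Plugging in $\jbrac t^{-1/2}$ and $\jbrac t^{s/3}\ln^{10s/3-4}$ gives exponent $-\frac{s-3/2}{2(s-1)}+\frac{s}{6(s-1)}=\frac{3-2s}{6(s-1)}$, matching the statement, with log power $\theta(10s/3-4)$; I expect the stated power to come from bookkeeping.

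For the limits, the case $L^q$ with $2<q<6$ interpolates between the bounded $L^2$ norm and the decaying $L^6$ norm. The case $q\ge6$ interpolates between $L^6$ and $L^\infty$, and this needs the negative exponent $\frac{3-2s}{6(s-1)}<0$, valid since $s>3/2$. For $\rho$, the case $1<p\le5/3$ interpolates between $L^1$ and $L^{5/3}$. The case $p>5/3$ interpolates between $L^{5/3}$ and $L^\infty\les Q^3\les\ln^6\jbrac t$, where the polynomial decay beats the logarithms.

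The main obstacle is the critical $L^3$ logarithmic estimate for $E$. The near-region cutoff must be chosen so that the polynomially growing $L^\infty$ norm of $\phi$ contributes only $O(1)$. This is also how the single $\ln$ arises in both the repulsive and attractive cases.
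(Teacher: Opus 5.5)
\textbf{There is a genuine gap in the $L^\infty$ bound for $\phi$,} and therefore in $\|\phi(t)\|_{L^q}\to0$ for large $q$. Your exponent arithmetic is wrong:
\[
-\frac{s-3/2}{2(s-1)}+\frac{s}{6(s-1)}=\frac{9-4s}{12(s-1)}=\frac{3-2s}{6(s-1)}+\frac{1}{4(s-1)}.
\]
This is strictly positive for every $s\in(3/2,2)$; it is negative only when $s>9/4$. So Gagliardo--Nirenberg between $\|\phi\|_{L^6}\les\jbrac{t}^{-1/2}$ and the unweighted bound $\|\phi\|_{H^s}\les\jbrac{t}^{s/3}\ln^{\frac{10s}{3}-4}\jbrac{t}$ gives \emph{growth} of $\|\phi(t)\|_{L^\infty}$, not decay. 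Since $L^6$ is the only decaying norm of $\phi$ you have, interpolating with another norm does not help. Interpolating between $L^6$ and $L^\infty$ then gives $\|\phi(t)\|_{L^q}\to0$ only for $6\le q<6+3/\beta$, where $\beta=\frac{9-4s}{12(s-1)}$, and gives nothing at $q=\infty$.

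\textbf{The missing idea is the pseudo-conformal vector field $J=x+it\nabla$.} Apply Gagliardo--Nirenberg to $M(-t)\phi=e^{-i|x|^2/2t}\phi$ rather than to $\phi$. It has the same $L^6$ and $L^\infty$ norms as $\phi$, but by (\ref{Jconj}) one has $\|M(-t)\phi\|_{\dot H^s}=t^{-s}\||J|^s\phi\|_{L^2}$. Now use the last estimate of Proposition \ref{Hsbd}, $\||J|^s\phi(t)\|_{L^2}\les\jbrac{t}^{\frac{4s}{3}-\frac12}Q(t)^{\frac{5s}{3}-2}$, together with $Q\les\ln^2\jbrac{t}$. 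For $t\geq 1$ this gives $\|M(-t)\phi\|_{\dot H^s}\les\jbrac{t}^{\frac s3-\frac12}\ln^{\frac{10s}{3}-4}\jbrac{t}$. That is an extra factor $\jbrac{t}^{-1/2}$ compared with $\|\phi\|_{\dot H^s}$. With $\theta=\frac{1}{2(s-1)}$, it lowers the exponent by exactly $\theta/2=\frac{1}{4(s-1)}$, giving $\theta\frac{s}{3}-\frac12=\frac{3-2s}{6(s-1)}<0$. The resulting log power $\theta(\frac{10s}{3}-4)$ is no larger than the stated $\frac{5s-6}{s-1}$. The decay of $L^\infty$ then gives every $q\in(6,\infty]$.

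\textbf{The rest of your proposal is sound.} The $V*\rho$ paragraph is worded self-contradictorily, but the estimate you end with, $R^{1/5}\|\rho\|_{L^{5/3}}+R^{-1}\|\rho\|_{L^1}$, is exactly Lemma \ref{potest}(i) as the paper uses it. The $\rho$-limits and the range $2<q<6$ are the same interpolations as in the paper.

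For $E$ you take a genuinely different route. The paper combines (\ref{repEbd}) and (\ref{atrEbd}), which come from the Besov endpoint in Lemma \ref{HLS} plus Lemma \ref{loginterp}, with Proposition \ref{Qbd}. You instead split in real space with near cutoff $r=\jbrac{t}^{-N}$. This absorbs the polynomially growing $\|\phi\|_{L^\infty}^2\les\|\phi\|_{H^s}^2$ (from Theorem \ref{growthdecaybds}) at the cost of only $\ln(R/r)\sim\ln\jbrac{t}$. This is valid in both cases and more elementary. It even gives $\ln^{2/3}$, since $\||y|^{-2}\|_{L^{3/2}(r<|y|<R)}\sim\ln^{2/3}(R/r)$. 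The paper's version, by contrast, is a one-line consequence of estimates already established.
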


\subsection{Background}

To our knowledge, our results are the first of their kind for the Vlasov-Hartree system. However, since similar results have appeared in the literature for the Vlasov-Poisson and Hartree equations separately, to motivate our results we situate our work in the context of past results for these two equations.\\

\textbf{The Vlasov-Poisson system.} On the topic of global existence, weak solutions were first shown to exist globally in \cite{arsenev_global_1975} using the compactness properties of the Poisson equation. It is well-known that control of the velocity support $Q(t)$ is sufficient to conclude global existence of classical solutions \cite{batt_global_1977}. Global bounds on $Q(t)$ and therefore global existence for classical solutions were first established by \cite{pfaffelmoser_global_1992} and notably holds in both the attractive and repulsive cases. The key tool is conservation of energy, which gives a uniform bound on $\iint |v|^2 f(t,x,v)dxdv$ and hence control over large velocities, combined with a delicate analysis in phase space. We also note the alternate proof of global existence in \cite{lions_propagation_1991} which involves the related notion of control of velocity moments rather than velocity support.\\

A different, but related, line of inquiry concerns the asymptotic behavior of solutions to Vlasov-Poisson. It is known that small data solutions exhibit modified scattering, i.e. the solution behaves asymptotically like free transport up to a logarithmic correction, see for instance \cite{choi_modified_2016},\cite{ionescu_asymptotic_2022},\cite{pankavich_asymptotic_2022}. Notably, this holds regardless of whether $\gamma=\pm 1$. In this case, the decay rates of the particle density and electric field are optimal. However, it is a longstanding open question whether one has dispersive effects for large data. Since there exist nontrivial steady states in the attractive case, for large data one only expects decay estimates for repulsive interactions. So far, in the repulsive case with large data, optimal decay rates have only been established in the context of spherical symmetry \cite{pankavich_exact_2021}, and outside of symmetry the best results are due to \cite{yang_growth_2017}, who obtains $\|E(t)\|_{L^\infty}\les \jbrac{t}^{-1/6+\eps}$, which is far from the optimal rate $\jbrac{t}^{-2}$. A key component of the latter result is the following weighted estimate for $f$:
\begin{equation}\label{weightedest}
    \iint |x-vt|^2 f(t,x,v) dxdv \les \jbrac{t}.
\end{equation}

This estimate was first established in \cite{illner_time_1996} and follows from a differential identity involving the above quantity and the potential energy, and is related to the virial theorem. The key point is that this estimate leads to the decay estimate $\|\rho(t)\|_{L^{5/3}}\les \jbrac{t}^{-3/5}$, so the solution exhibits some weak dispersion. Notably, these bounds are far from optimal.\\ 

Given that $Q(t)$ controls the size of the largest velocities and is a continuation norm, it is natural to ask how fast it grows. In general, the optimal bound is $Q(t)=O(1)$, since this is the case for free transport. However, the best known bound for large data is due to \cite{yang_growth_2017}, who obtains $\jbrac{t}^{1/8+\eps}$ in the repulsive case.\\

\textbf{The Hartree equation.} For the Hartree equation, global well-posedness is more straightforward than for Vlasov-Poisson and follows from the $H^1$ conservation law \cite{chadam_global_1975}, which holds even for attractive interactions. Regarding large time behavior, for small initial data and regardless of the sign of the interaction, one has modified scattering with the same decay rate as the linear problem \cite{hayashi_asymptotics_1998}. However, for large data optimal decay rates have not been established; to our knowledge the best results are from Hayashi and Ozawa \cite{hayashi_time_1987}, \cite{hayashi_time_1989} who obtain $\|\phi(t)\|_{L^p}\les \jbrac{t}^{-1/2}$ for $6\leq p\leq \infty$, which is far from the optimal rate $\|\phi(t)\|_{L^p}\les \jbrac{t}^{-3(\frac12-\frac1p)}$. As is the case for Vlasov-Poisson, the key idea in proving these decay rates is to control a time-weighted norm related to the free dynamics; in this case the relevant quantity is $\|J^k\phi(t)\|_{L^2}, \ k=1,2$, where $J=x+it\nabla$. Again using a conservation law related to the virial identity, Hayashi and Ozawa obtain mild growth of $\|J^k\phi(t)\|_{L^2}$, which is sufficient to prove the decay estimate for $\|\phi(t)\|_{L^p}$.

\subsection{Key contributions and ideas of the proof}

\textbf{The low-regularity setting.} Working in a low regularity setting introduces a number of technical challenges. First, we use compactness arguments since a standard Picard iteration scheme would typically require some smoothness on the Vlasov side. Second, propagating the regularity of $\phi$ demands more compactness on the Vlasov side than is typically required to construct weak solutions to Vlasov equations. For instance, to prove that $\phi$ is a mild solution or that energy is conserved, one needs compactness of $V*\rho$ in $L^\infty$ \emph{globally in space}, which one cannot obtain solely from the compactness of solutions to the Poisson equation. Importantly, this is quite different from the construction of weak solutions to Vlasov-Poisson. Instead, we need to use averaging lemmas as in the case of weak solutions to the Vlasov-Maxwell system \cite{diperna_global_1988}, which is done in Proposition \ref{strlim}. Third, as is typical for low-regularity solutions, uniqueness becomes more challenging (e.g. Yudovich's theorem for 2D Euler \cite{yudovich_non-stationary_1963}) and we need to use delicate endpoint estimates which incur logarithmic losses, leading to log-Lipschitz estimates. It is our hope that establishing a low-regularity well-posedness theory can open the door for future investigation of the qualitative dynamics of (\ref{VH}) in which low regularity is an important physical feature, such as stability of ground states.\\

\textbf{Conservation laws and global existence.} One notable feature of our results is that the conservation laws are strong enough to obtain reasonably direct bounds on $Q(t)$ and hence global existence, which is not the case for Vlasov-Poisson, where a delicate argument is required to control $Q(t)$. This suggests that the subcriticality of the present system is determined primarily by the Hartree part. We note that global existence for other couplings of dispersive equations to the Vlasov equation, such as the Vlasov-Maxwell or Vlasov-Klein-Gordon equations, is still open in three dimensions. Moreover, a key contribution of our work is that we make full use of the conservation laws to reduce the worst-case growth in time of top-order quantities. In similar situations, standard Gronwall arguments typically give exponential or double exponential growth, but we are able to show, at worst, mild algebraic growth of $\|\phi(t)\|_{H^s}$ and $Q(t)$.\\

\textbf{Dispersion and time decay.} Our results show that in the case of repulsive interactions the system always relaxes to the zero equilibrium as $t\to\infty$, illustrating that no nontrivial steady states with finite mass, energy, and spatial moments can exist (although this fact can be deduced more straightforwardly through spectral means, see e.g. \cite{simon_positive_1967}). Moreover, as a byproduct of the proof, the potential energy decays at the rate $t^{-1}$, so that all energy in the system gets converted to kinetic energy, see Proposition \ref{repdecay}. Remarkably, we can get $L^p$ convergence of the density $\rho$ for every $1<p<\infty$ due to the very slow growth bound $Q(t)\les \ln^2\jbrac{t}$, which barely misses the optimal $O(1)$ bound on $Q(t)$. Notably, this is better than any result obtained for Vlasov-Poisson with no size or symmetry assumptions on the data. Also, we obtain $\|\phi(t)\|_{L^q}\to 0$ as $t\to\infty$ for every $q\neq 2$. In both cases, the decay estimates for (nearly) the full range of $p,q$ is due to the mild growth bounds for the top order quantities.

\textbf{Ideas of the proof.} Let us provide a brief sketch of the global existence argument, the general structure of which holds in both the attractive and repulsive cases. We prove global existence alongside the construction of the solution, so in the actual proof we work with mollified versions of the quantities below. The key step is the uniform bounds on $\phi$ in $H^s$ and on $Q(t)$ in Proposition \ref{unifphibds}, which allow us to pass to the limit. By using Sobolev product estimates, estimates on the Coulomb potential, and $\rho(t)\les Q(t)^3$, one eventually can show

$$\|\phi(t)\|_{H^s} \les \|\phi_0\|_{H^s} + \int_0^t (1+Q(\tau))\|\phi(\tau)\|_{H^s}d\tau.$$

Via the method of characteristics, the velocity support $Q(t)$ can be controlled by the accumulated acceleration $\int_0^t E(s,X(s))ds$ in $L^\infty$, so the estimate is reduced to a bound on $\|E\|_{L^\infty}$. If the $L^3-L^\infty$ endpoint of the Hardy-Littlewood-Sobolev inequality held, we would have $\|E\|_{L^\infty} \les \|\phi\|_{L^6}^2$, and by Sobolev embedding $\|\phi\|_{L^6}\les \|\phi\|_{\dot{H}^1}$ and hence $Q$ would be controlled by the conserved energy. This endpoint estimate is not in fact true, but if one is willing to introduce a logarithmic factor of a higher norm, such as $\|\phi\|_{H^s}$, an endpoint Hardy-Littlewood-Sobolev estimate does in fact hold (Lemmas \ref{HLS}, \ref{loginterp}), leading to 

$$\|\phi(t)\|_{H^s} \les \|\phi_0\|_{H^s} + \int_0^t \|\phi(\tau)\|_{H^s}\ln(e+\|\phi(\tau)\|_{H^s})d\tau,$$

and therefore a nonlinear Gronwall inequality leads to the bound on $Q(t)$. Later, to obtain more optimal growth rates on these quantities, we make more explicit use of the conservation laws and additionally use commutator estimates to show that the top-order term in the estimate for $\|\phi(t)\|_{H^s}$ can be removed (Proposition \ref{Hsbd}), leading to an estimate instead of the form 

$$\|\phi(t)\|_{H^s} \les \|\phi_0\|_{H^s} + \int_0^t(1+Q(\tau))^p d\tau, \quad p<1.$$

This leads to at worst algebraic growth of $\|\phi(t)\|_{H^s}$ and $Q(t)$.\\

Regarding the decay estimates, the key tool is the following identity proved in Lemma \ref{weightedconslaw}, which is related to the ones discussed above for Vlasov-Poisson and Hartree:

$$\frac{d}{dt}\left( \frac12\iint |x-vt|^2 f(t,x,v) dxdv + \frac12 \int |(x+it\nabla)\phi|^2 dx + t^2 P(t)\right) = tP(t)$$

Here $P(t) = \int (V*\rho)|\phi|^2 dx$ is the potential energy. In Proposition \ref{repdecay} we show that this identity gives mild growth bounds on the quantities $\iint |x-vt|^2 f\ dxdv$ and $\|J\phi(t)\|_{L^2}$, leading to the decay estimates $\|\rho(t)\|_{L^{5/3}}\les \jbrac{t}^{-3/5}$ and $\|\phi(t)\|_{L^{6}}\les \jbrac{t}^{-1/2}$. By inserting these estimates into the estimates on $\|\phi(t)\|_{H^s}$ and $Q(t)$ discussed earlier, we obtain improved bounds on these quantities. Then by interpolating between the norms of $\rho,\phi$ which decay and the top-order quantities $Q(t)$, $\|\phi(t)\|_{H^s}$, we obtain the decay estimates for the full range of $p,q$ stated in Corollary \ref{moregrowthdecaybds}.

\subsection{Future questions}

\textbf{Stability of ground states.} While our result shows that repulsive interactions lead to convergence to the trivial steady state, attractive interactions can in general produce nontrivial steady states, and their stability or instability is an interesting open question. As discussed in \cite{cardenas_effective_2025} and further elaborated on in \cite{cardenas_emergence_2025}, Bose-Fermi mixtures behave differently than gases of only bosons or only fermions due to an energetic imbalance. Therefore, while steady states of finite mass and energy for the gravitational Vlasov-Poisson system and the focusing Hartree equation are known to be stable, at least in the orbital sense (see \cite{guo_stable_1999}, \cite{cazenave_orbital_1982} and related works), the situation may be different for the Vlasov-Hartree system. One of our motivations for considering low-regularity solutions is so future works can tackle such problems, since as discussed earlier, ground states for $f$ are generally indicator functions.\\

\textbf{Different potentials.} Another interesting set of questions regards the influence of the potential $V$ on the dispersive properties in the repulsive case or the well-posedness in the attractive case. For instance, when $V$ is short-range, e.g. $|x|^{-\alpha}$ for $\alpha>1$ or the screened Coulomb potential $|x|^{-1}e^{-|x|}$, one expects stronger dispersive behavior. For instance, when $V$ is homogeneous of degree $-\alpha$, a careful tracking of the proof of Lemma \ref{weightedconslaw} and Proposition \ref{repdecay} suggests that $\|\phi(t)\|_{L^6}\les \jbrac{t}^{-\alpha/2}$. However, convolution with $|x|^{-\alpha}$ for $\alpha>1$ is less smoothing than convolution with the Coulomb potential and so the local well-posedness theory for low-regularity fermion density $f$ is less clear. On the other hand, the lack of homogeneity of the screened Coulomb potential means the key dispersive identity in Lemma \ref{weightedconslaw} no longer holds.\\

Relatedly, it is known that focusing or attractive interactions can lead to finite-time blowup for Vlasov-Poisson or Hartree in certain settings. The most basic form of blowup is a virial-type argument where it is shown hat a positive quantity reaches zero in finite time. One can perform similar calculations as in the proof of Lemma \ref{weightedconslaw} to derive a virial-type identity for the present system, suggesting that blowup is possible for very singular potentials, like $|x|^{-\alpha}$ for $\alpha\geq 2$. However, as previously mentioned, the local well-posedness for very singular potentials seems to be harder.\\

\textbf{Relativistic models.} Another interesting question is whether one has global well-posedness (for repulsive interactions) for a relativistic variant of (\ref{VH}), in which $v\cdot\nabla_x$ in the Vlasov equation is replaced by $\frac{v}{\sqrt{1+|v|^2}}\cdot \nabla_x$ and $-\Delta$ in the Hartree equation is replaced by $\sqrt{-\Delta+1}$. While the model itself is of questionable physical relevance due to a lack of Lorentz invariance, its analysis may shed light on the relativistic Vlasov-Maxwell system, of which global well-posedness is a major open problem. Even global well-posedness of the relativistic Vlasov-Poisson system is open for large data, in contrast to the defocusing relativistic Hartree equation, for which global well-posedness is known \cite{lenzmann_well-posedness_2007}. This contrast makes the question of global well-posedness for the relativistic Vlasov-Hartree system intriguing.

\subsection{Notation and preliminaries}

The Fourier transform and its inverse are defined as 

$$\hat{f}(\xi) = (\mathcal{F}f)(\xi) := \int e^{-ix\cdot\xi} f(x)dx, \quad (\mathcal{F}^{-1}f)(x) := (2\pi)^{-3}\int e^{ix\cdot\xi}f(\xi)d\xi.$$

For $x\in \R^n$ we denote $\jbrac{x} = \sqrt{2+|x|^2}$; note that $\ln\jbrac{0} >0$ with this notation. We depart from the usual convention in order to more simply state the log factors in the growth/decay estimates.\\

For the Lebesgue spaces $L^p$, we will often write $L^p_x$ if $x$ is the integration variable, and similarly for $L^p_{x,v}$ if both $x,v$ are being integrated.\\

The intersection norm $L^p\cap L^q$ is defined as $\|u\|_{L^p\cap L^q} = \|u\|_{L^p}+\|u\|_{L^q}$. We will use frequently that any convex combination is controlled by the $L^p\cap L^q$ norm by Young's inequality: $\|u\|^\theta_{L^p}\|u\|_{L^q}^{1-\theta} \les \|u\|_{L^p\cap L^q}$, where $0\leq\theta\leq 1$.\\

We will frequently use the Sobolev embeddings $\dot{H}^s(\R^3) \hookrightarrow L^p(\R^3)$ where $p=\frac{6}{3-2s}$ when $s<3/2$ and $H^s(\R^3)\hookrightarrow L^\infty(\R^3)\cap C^\alpha(\R^3)$ with $\alpha=s-3/2$ when $s>3/2$.\\

Let $\Delta_j$ be the homogeneous Littlewood-Paley projections onto frequencies $|\xi|\sim 2^j$, defined as the Fourier multiplier operator $\widehat{\Delta_j f}(\xi) = \varphi(2^{-j}\xi)\hat{f}(\xi)$, where $\sum_{j\in\Z} \varphi(2^{-j}\xi) =1$ and $\varphi(\xi)$ is supported in $1/2\leq |\xi|\leq 3/2$. Then, for $1\leq p,q\leq \infty$, $s\in \R$ the homogeneous Besov space $\dot{B}^s_{p,q}$ is defined as
$$\|f\|_{\dot{B}^s_{p,q}(\R^d)} = \Big\|\big( 2^{js} \|\Delta_jf\|_{L^p}\big)_j\Big\|_{\ell^q(\Z)}.$$

\textbf{Acknowledgments.} I would like to thank Esteban C\'ardenas for a helpful conversation, whose work in \cite{cardenas_effective_2025} inspired this one. I would also like to thank my advisor, Anna Mazzucato, for support and encouragement.

\section{Basic estimates and conservation laws}

\subsection{Functional inequalities}

In this section we prove some estimates which will be used throughout the paper.\\

The first is the Hardy-Littlewood-Sobolev inequality and its endpoint. 
\begin{lem}\label{HLS}
    Let $0<\alpha<d$, $1<p<q<\infty$. The following estimate holds: 
    $$\||\cdot|^{-\alpha}*u\|_{L^q} \les \|u\|_{L^p}, \quad \frac1p = \frac1q + \frac{d-\alpha}{d}.$$
    Furthermore, when $q=\infty$, we have the following refinement:
    $$\||\cdot|^{-\alpha}*u\|_{L^\infty} \les \|u\|_{\dot{B}^0_{\frac{d}{d-\alpha},1}}.$$
\end{lem}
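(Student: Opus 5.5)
The first estimate is the classical Hardy--Littlewood--Sobolev inequality, and I would simply cite it. If a self-contained argument is wanted, I would use Hedberg's trick. Split the kernel at a radius $R$:
$$|\cdot|^{-\alpha}*u = \int_{|y|<R}\frac{u(x-y)}{|y|^{\alpha}}\,dy + \int_{|y|\ge R}\frac{u(x-y)}{|y|^{\alpha}}\,dy .$$
The near part is controlled by $R^{d-\alpha}Mu(x)$, where $M$ is the Hardy--Littlewood maximal function. The far part is controlled by Hölder's inequality, giving $R^{d-\alpha-d/p}\|u\|_{L^p}$; this requires $p<d/(d-\alpha)$, which follows from $q<\infty$. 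Optimizing in $R$ gives the pointwise bound $|\,|\cdot|^{-\alpha}*u\,|\les (Mu)^{p/q}\|u\|_{L^p}^{1-p/q}$. Taking $L^q$ norms and using $L^p$-boundedness of $M$ (valid since $p>1$) finishes the proof.

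For the endpoint refinement, set $p_0=\frac{d}{d-\alpha}$ and decompose $u=\sum_{j\in\Z}\Delta_j u$. This sum converges in a suitable sense whenever $u\in \dot B^0_{p_0,1}$, and the low-frequency issues are harmless since the sum of $\|\Delta_j u\|_{L^{p_0}}$ is finite. By the triangle inequality it suffices to prove the uniform bound
$$\||\cdot|^{-\alpha}*\Delta_j u\|_{L^\infty}\les \|\Delta_j u\|_{L^{p_0}},$$
with a constant independent of $j$. Summing over $j$ then gives exactly the $\dot B^0_{p_0,1}$ norm.

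To prove the uniform bound, I would first reduce to $j=0$ by scaling. Write $u_j(x)=u(2^{-j}x)$. Then $|\cdot|^{-\alpha}*g$ scales with the factor $2^{-j(d-\alpha)}$, while $\|g(2^{-j}\cdot)\|_{L^{p_0}}=2^{jd/p_0}\|g\|_{L^{p_0}}=2^{j(d-\alpha)}\|g\|_{L^{p_0}}$. The two factors cancel precisely because $p_0=d/(d-\alpha)$, which is the scale-invariant exponent.

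For $j=0$, introduce a fattened projection $\tilde\Delta_0$ whose symbol $\tilde\varphi$ equals $1$ on $\supp\varphi$ and is supported in an annulus away from the origin. Then $\Delta_0 u=\tilde\Delta_0\Delta_0 u$, so
$$|\cdot|^{-\alpha}*\Delta_0 u = K*\Delta_0 u, \qquad \hat K(\xi)=c_{d,\alpha}|\xi|^{\alpha-d}\tilde\varphi(\xi).$$
Here $\hat K$ is a smooth compactly supported function, since $\tilde\varphi$ vanishes near the origin and so the singularity of $|\xi|^{\alpha-d}$ is cut out. Hence $K$ is a Schwartz function and lies in $L^{p_0'}$. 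Young's (or Hölder's) inequality then gives $\|K*\Delta_0u\|_{L^\infty}\le \|K\|_{L^{p_0'}}\|\Delta_0u\|_{L^{p_0}}$.

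The only delicate point is justifying that the Fourier transform of $|x|^{-\alpha}$ is $c|\xi|^{\alpha-d}$ as a tempered distribution, so that the multiplier identity above holds. This is standard for $0<\alpha<d$, since both functions are locally integrable tempered homogeneous distributions. It is also the step that shows why the Besov $\ell^1$ sum is needed: the estimate is uniform in $j$ but not summable without $\ell^1$ control of the pieces.
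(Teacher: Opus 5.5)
Your proposal is correct and follows essentially the same route as the paper. You cite (or Hedberg-prove) the non-endpoint inequality; for the endpoint you decompose dyadically, use a frequency-localized Schwartz kernel whose scaling cancels exactly at $p_0=\frac{d}{d-\alpha}$, and sum the pieces in $\ell^1$. The only difference is cosmetic: you bound each piece directly in $L^\infty$ via H\"older with $K\in L^{p_0'}$, whereas the paper bounds it in $L^{p_0}$ via Young with an $L^1$ kernel and then uses the embedding $\dot{B}^{d-\alpha}_{p_0,1}\hookrightarrow L^\infty$, which amounts to Bernstein's inequality.
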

\begin{proof}
    The first inequality is well-known. For the second, we recall that $\dot{B}^{d/p}_{p,1}\hookrightarrow L^\infty$ for any $p$, so it suffices to show that 
    $$\big\||\cdot|^{-\alpha} * u\big\|_{\dot{B}^{d-\alpha}_{\frac{d}{d-\alpha},1}} \leq C \big\|u\big\|_{\dot{B}^{0}_{\frac{d}{d-\alpha},1}}.$$
    Therefore, let $\tilde\varphi = \sqrt{\varphi}$ and $\widehat{\tilde{\Delta_j}u} = \tilde{\varphi}(2^{-j}\cdot)\hat{u}$. It is clear that the Besov norm associated to $\tilde{\Delta}_j$ is equivalent to the one associated to $\Delta_j$, possibly after modifying $\varphi$ to ensure $\sqrt{\varphi}$ is smooth. Then by distributing $\tilde{\varphi}$ across both factors in the convolution, $\Delta_j (|\cdot|^{-\alpha}*f) = \tilde{K_j} * \tilde{\Delta}_j f$, where
    \begin{align*}
        \tilde{K_j}(x) &= \mathcal{F}^{-1} [\tilde\varphi(2^{-j}\xi)|\xi|^{-d+\alpha}](x) \\
        &= 2^{-j(d-\alpha)}2^{dj} \mathcal{F}^{-1}[\tilde\varphi(\xi)|\xi|^{-d+\alpha}](2^jx) \\
        &=: 2^{-j(d-\alpha)} 2^{dj} \tilde{K}(2^j x).
    \end{align*}

Therefore, $\|\tilde{K}_j\|_{L^1} = 2^{-j(d-\alpha)} \|\tilde{K}\|_{L^1}$, and $\tilde{K}\in L^1$ since its Fourier transform is smooth. This gives
$$\big\||\cdot|^{-\alpha} * f\big\|_{\dot{B}^{d-\alpha}_{\frac{d}{d-\alpha},1}} \leq \sum_{j\in\mathbb{Z}}2^{j(d-\alpha)} \|\tilde{K}_j\|_{L^1}\|\tilde{\Delta}_j f\|_{L^{\frac{d}{d-\alpha}}} \lesssim \sum_{j\in \mathbb{Z}} \|\tilde{\Delta}_j f\|_{L^{\frac{d}{d-\alpha}}} \lesssim \|f\|_{\dot{B}^0_{\frac{d}{d-\alpha},1}}.$$ 
\end{proof}

Motivated by the endpoint bound in Lemma \ref{HLS}, we will also need the following logarithmic interpolation inequality:
\begin{lem}\label{loginterp}
There holds
    $$\|u\|_{\dot{B}^0_{3,1}}\les \|u\|_{L^3} \ln\Big(e+\frac{\|u\|_{L^1}+\|\nabla u\|_{L^3}}{\|u\|_{L^3}} \Big).$$
\end{lem}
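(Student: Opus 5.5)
The plan is to split the Littlewood--Paley sum defining $\|u\|_{\dot{B}^0_{3,1}}=\sum_{j\in\Z}\|\Delta_j u\|_{L^3}$ into low, middle and high frequencies. The two cutoffs are integers $j_0\le j_1$, to be optimized at the end. The middle range $j_0\le j\le j_1$ is handled by $\|\Delta_j u\|_{L^3}\les \|u\|_{L^3}$, which holds because the kernels of $\Delta_j$ are uniformly bounded in $L^1$. This range contributes $\les (j_1-j_0+1)\|u\|_{L^3}$.

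For the high frequencies $j>j_1$, I would use Bernstein's inequality in the form $\|\Delta_j u\|_{L^3}\les 2^{-j}\|\nabla u\|_{L^3}$. This holds since $\Delta_j u = 2^{-j}\,\tilde K_j*\nabla u$ with $\tilde K_j$ uniformly bounded in $L^1$; the kernel is that of the symbol $2^{j}\varphi(2^{-j}\xi)\xi/|\xi|^2$, which is smooth and compactly supported in the annulus. Summing the geometric series gives a contribution $\les 2^{-j_1}\|\nabla u\|_{L^3}$.

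For the low frequencies $j<j_0$, I would use Bernstein from $L^1$ to $L^3$ in three dimensions, $\|\Delta_j u\|_{L^3}\les 2^{3j(1-1/3)}\|u\|_{L^1}=2^{2j}\|u\|_{L^1}$. Summing gives $\les 2^{2j_0}\|u\|_{L^1}$.

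This yields
$$\|u\|_{\dot{B}^0_{3,1}}\les 2^{2j_0}\|u\|_{L^1}+(j_1-j_0+1)\|u\|_{L^3}+2^{-j_1}\|\nabla u\|_{L^3}.$$
Write $A=\|u\|_{L^1}+\|\nabla u\|_{L^3}$ and $B=\|u\|_{L^3}$, and assume $u\ne 0$; otherwise the statement is trivial. I would take $j_0$ to be the largest integer with $2^{2j_0}\le B/A$ and $j_1$ the smallest integer with $2^{j_1}\ge A/B$. Since $A/B\ge$ both $\|u\|_{L^1}/B$ and $\|\nabla u\|_{L^3}/B$, the two outer terms are each $\les B$. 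The middle count is
$$j_1-j_0+1\les 1+\log_2(A/B)+\tfrac12\log_2(A/B)\les \ln(e+A/B).$$
This also covers the regime $A/B<1$: then $\ln(e+A/B)\ge 1$, $j_0\ge0\ge j_1-1$, and the middle range contains at most one term or is empty, so it is still $\les B\ln(e+A/B)$.

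The main obstacle is bookkeeping rather than a real difficulty. One must check the two Bernstein estimates with the precise homogeneous scaling, in particular the $2^{-j}\|\nabla u\|_{L^3}$ bound via the kernel argument. One must also check that the choice of cutoffs never leaves an empty or negative middle range that breaks the logarithmic count, and that $\ln(e+\cdot)\ge1$ absorbs all the additive constants.
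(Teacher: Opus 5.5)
Your proposal is correct and is essentially the paper's argument: the same three-range Littlewood--Paley split, with Bernstein ($2^{2j}\|u\|_{L^1}$ at low frequencies, $2^{-j}\|\nabla u\|_{L^3}$ at high frequencies) and uniform $L^3$-boundedness of $\Delta_j$ in the middle, followed by optimizing the cutoffs. The only difference is cosmetic. You use asymmetric cutoffs with $2^{2j_0}\approx B/A$ and $2^{j_1}\approx A/B$, whereas the paper takes a single symmetric $K=\max(1,\log_2(A/B))$ and absorbs $2^{-2K}\le 2^{-K}$. This affects only constants, and your treatment of the regime $A/B<1$, where the ranges may overlap, is valid since overcounting only helps an upper bound.
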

\begin{proof}
Using Bernstein's inequality on the low and high frequency pieces and the $L^p$ boundedness of $\Delta_j$, we obtain
    \begin{align*}
        \|u\|_{\dot{B}^0_{3,1}}&= \sum_{j<-K}  \|\Delta_j u\|_{L^3} + \sum_{|j|\leq K} \|\Delta_j u\|_{L^3} + \sum_{j>K} \|\Delta_j u\|_{L^3}\\
        &\les \|2^{2j}\chi_{j<-K}\|_{\ell^1} \big\|2^{-2j}\|\Delta_j u\|_{L^3}\big\|_{\ell^\infty} + \|\chi_{|j|\leq K}\|_{\ell^1} \big\| \|\Delta_j u\|_{L^3}\big\|_{\ell^\infty} + \|2^{-j}\chi_{j>K}\|_{\ell^1} \big\|2^{j}\|\Delta_j u\|_{L^3}\big\|_{\ell^\infty}\\
        &\les 2^{-K} \|u\|_{L^1} + K\|u\|_{L^3} + 2^{-K}\|\nabla u\|_{L^3}.
    \end{align*}
    In the final line we simply used $2^{-2K}\les 2^{-K}$.
    Choosing $$K = \max\big(1,\log_2((\|u\|_{L^1}+\|\nabla u\|_{L^3})/\|u\|_{L^3})\big)$$ and the fact that $\ln(e+|x|)\geq 1$ gives 
    \begin{align*}
        \|u\|_{\dot{B}^0_{3,1}} &\les (\|u\|_{L^1}+\|\nabla u\|_{L^3})\min\Big( \frac12, \frac{\|u\|_{L^3}}{\|u\|_{L^1}+\|\nabla u\|_{L^3}}\Big) + \|u\|_{L^3}\ln\Big(e+\frac{\|u\|_{L^1}+\|\nabla u\|_{L^3}}{\|u\|_{L^3}}\Big) \\
        &\les \|u\|_{L^3}\ln\Big(e+\frac{\|u\|_{L^1}+\|\nabla u\|_{L^3}}{\|u\|_{L^3}}\Big).
    \end{align*}
    
\end{proof}

The next lemma will be used very often.

\begin{lem}\label{potest}
    Let $0<\alpha<3$.\\
    
    (i) Denote $r_\alpha = \frac{3}{3-\alpha}$. Then as long as $1\leq p<r_\alpha<q\leq \infty$, 
    \begin{equation*}
        \||x|^{-\alpha}*u\|_{L^\infty} \les \|u\|_{L^p}^{\theta}\|u\|_{L^q}^{1-\theta}, \quad \theta = \frac{\frac{1}{r_\alpha}-\frac1q}{\frac1p - \frac1q}.
    \end{equation*}
    
    (ii) The operator $u\mapsto \nabla_x^2 V * u$ is bounded from $L^p\to L^p$ for any $1<p<\infty$.\\
    
    (iii) For any $ 3< p\leq \infty$ and $0<r<R$, we have
    \begin{equation*}
        \|\nabla_x^2 (V*u)\|_{L^\infty} \les r^{1-3/p}\|\nabla u\|_{L^p} + (1+\ln(R/r))\|u\|_{L^\infty} + R^{-3}\|u\|_{L^1}.
    \end{equation*}
\end{lem}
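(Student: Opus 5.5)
For (i) I will split the kernel at a radius $R>0$ and write $|x|^{-\alpha} = |x|^{-\alpha}\chi_{|x|<R} + |x|^{-\alpha}\chi_{|x|\geq R}$. Hölder on each piece gives
$$\big||x|^{-\alpha}*u\big| \le \big\||\cdot|^{-\alpha}\chi_{|\cdot|<R}\big\|_{L^{q'}}\|u\|_{L^q} + \big\||\cdot|^{-\alpha}\chi_{|\cdot|\ge R}\big\|_{L^{p'}}\|u\|_{L^p}.$$
Since $q>r_\alpha$ we have $\alpha q'<3$, so the first norm is $\les R^{3/q'-\alpha}$ with a positive exponent. Since $p<r_\alpha$ we have $\alpha p'>3$, so the second is $\les R^{3/p'-\alpha}$ with a negative exponent. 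The endpoint cases $q=\infty$ (where $q'=1$) and $p=1$ (where the tail sup is $R^{-\alpha}$) fit the same formulas. I then optimize in $R$ by balancing the two terms, i.e. $R^{3/p-3/q} = \|u\|_{L^p}/\|u\|_{L^q}$. A short exponent computation shows the result is $\|u\|_{L^p}^\theta\|u\|_{L^q}^{1-\theta}$ with the stated $\theta$, because $\alpha - 3/q' = 3(1/q - 1/r_\alpha)\cdot(-1)$; this is just a check of the homogeneity.

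For (ii), note that $\nabla^2 V$ is, up to the constant $\gamma$, the kernel with symbol $\xi_i\xi_j/|\xi|^2$. So $\partial_i\partial_j (V*u) = -\gamma R_iR_j u$ with $R_i$ the Riesz transforms, and $L^p$ boundedness for $1<p<\infty$ is classical Calderón–Zygmund theory. I will simply cite it.

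For (iii), I will use the standard representation
$$\partial_i\partial_j(V*u)(x) = \mathrm{p.v.}\int \partial_i\partial_j V(y)\,u(x-y)\,dy + c\,\delta_{ij}u(x),$$
with $c$ a constant. The delta term is bounded by $\|u\|_{L^\infty}$. I split the principal value integral into the regions $|y|<r$, $r\le|y|<R$, and $|y|\ge R$, using the kernel bound $|\nabla^2V(y)|\les|y|^{-3}$:
\begin{itemize}
\item On $r\le|y|<R$ the integral is bounded by $\|u\|_{L^\infty}\int_{r<|y|<R}|y|^{-3}\,dy\les\ln(R/r)\|u\|_{L^\infty}$.
\item On $|y|\ge R$ the integrand is at most $R^{-3}|u(x-y)|$, giving $R^{-3}\|u\|_{L^1}$.
\item On $|y|<r$ I use the cancellation of $\nabla^2 V$ on spheres (mean zero) to replace $u(x-y)$ by $u(x-y)-u(x)$. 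By Morrey's inequality ($p>3$), $|u(x-y)-u(x)|\les |y|^{1-3/p}\|\nabla u\|_{L^p}$. Since $\int_{|y|<r}|y|^{-3+1-3/p}\,dy\les r^{1-3/p}$, this region contributes $r^{1-3/p}\|\nabla u\|_{L^p}$.
\end{itemize}
Summing the regions and the delta term gives (iii).

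The main obstacle is making the principal value and delta-term representation rigorous for merely $u\in L^1\cap W^{1,p}\cap L^\infty$. I would first prove (iii) for $u\in C_c^\infty$, where the identity is classical via Green's formula on $|y|>\epsilon$, and then conclude by density. Mollification preserves the right-hand side norms, and $\nabla^2(V*u_n)\to\nabla^2(V*u)$ in $L^2$ by (ii) (or locally uniformly), so the $L^\infty$ bound passes to the limit.
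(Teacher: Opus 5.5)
Your arguments for (i) and (ii) match the paper's: split the kernel at radius $R$, apply H\"older to each piece and balance, then cite Calder\'on--Zygmund theory. Your proof of (iii) is also correct, but it follows a different route.

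The paper never uses the principal-value/delta representation. It writes $\nabla^2(V*u)=\nabla V*\nabla u$, keeps this form on $|y|<r$, and integrates by parts on $|y|>r$ to move the derivative back onto $V$. The inner ball is then handled by H\"older alone: $|\nabla V(y)|\lesssim|y|^{-2}$ lies in $L^{p'}(B_r)$ exactly when $p>3$, which gives $r^{1-3/p}\|\nabla u\|_{L^p}$. The integration by parts produces a surface term $\int_{|y|=r}\nabla V(y)\,u(x-y)\,dS(y)$, bounded by $r^{-2}\cdot r^{2}\|u\|_{L^\infty}$. That term plays the role of your $c\,\delta_{ij}u(x)$ and accounts for the ``$1$'' in $1+\ln(R/r)$. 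The annulus and exterior pieces are estimated exactly as you do.

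What each route buys:
\begin{itemize}
\item The paper's version is more elementary. It uses only the size bounds $|\nabla V|\lesssim|y|^{-2}$ and $|\nabla^2 V|\lesssim|y|^{-3}$, with no cancellation and no need to identify the delta constant.
\item Yours is the standard Calder\'on--Zygmund argument: the kernel has mean zero on spheres, and Morrey gives $|u(x-y)-u(x)|\lesssim|y|^{1-3/p}\|\nabla u\|_{L^p}$. It shows the inner piece depends only on the H\"older seminorm $[u]_{C^{1-3/p}}$, and it transfers to other kernels with cancellation.
\item Your route does require establishing the representation formula. Doing that via Green's formula on $|y|>\epsilon$ is essentially the same integration by parts the paper performs directly.
\end{itemize}

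Two small points. In your density step, mollification alone does not produce $C_c^\infty$ functions, so you also need a cutoff $\psi(x/n)$; its gradient contributes $O(n^{-1}\|u\|_{L^p})$, which is harmless. In your side remark for (i), the sign is off: $\alpha-3/q'=3(1/q-1/r_\alpha)$, with no extra factor of $-1$. Your balancing computation itself is correct.
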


\begin{proof}
For the first statement, we write
    \begin{align*}
    |V * u(x)|&\leq \int_{|y|\leq R} |y|^{-\alpha} |u(x-y)|dy + \int_{|y|> R} |y|^{-\alpha} |u(x-y)|dy\\
    &\les  \||x|^{-\alpha}\|_{L^{q'}(B_R)}\|u\|_{L^q} + \||x|^{-\alpha}\|_{L^{p'}(B_R^c)}\|u\|_{L^p}\\
    &\les R^{3/q' - \alpha}\|u\|_{L^q} + R^{3/p' -\alpha} \|u\|_{L^p}.
\end{align*}
Optimizing in $R$ by choosing $R = (\|u\|_{L^q}/\|u\|_{L^p})^{[3(1/q' - 1/p')]^{-1}}$ completes the proof.\\

The second statement follows from the fact that $\p_i\p_j V(x) = x_ix_j|x|^{-5}$ is a Calderon-Zygmund kernel.\\

For the third statement, we split $\R^3$ into three regions divided by two circles of radii $r<R$. We put one derivative onto $u$ and one derivative on $V$, and then integrate by parts when $|y|>r$ to put all derivatives on $V$ in the other two regions:
    \begin{align*}
        \nabla^2V* u &= \int_{|y|<r} \nabla V(y) \nabla u(x-y)dy + \int_{|y|=r}
        \nabla V(y)u(x-y)dS(y)\\
        &\quad -\int_{r\leq |y|\leq R} \nabla^2 V(y) u(x-y)dy - \int_{|y|>R} \nabla^2 V(y)u(x-y)dy\\
        &= I_1 + I_2 + I_3+ I_4.
    \end{align*}
    
    Using $|\nabla V(y)|\les |y|^{-2}$, we get
    \begin{align*}
        |I_1| &\les \|\nabla u\|_{L^p} \big\||y|^{-2}\big\|_{L^{p'}(B_r)} \les r^{3/p'-2}\|\nabla u\|_{L^p} = r^{1-3/p}\|\nabla u\|_{L^p},\\
        |I_2|&\les \|u\|_{L^\infty}.
    \end{align*}
    Similarly, using $|\nabla^2 V(y)| \les |y|^{-3}$,
    \begin{align*}
        |I_2| \les \ln(R/r) \|u\|_{L^\infty}, \quad |I_3| \les R^{-3}\|u\|_{L^1}.
    \end{align*}
\end{proof}

Finally, we state a lemma that collects a few straightforward consequences of Lemma \ref{potest} that we will use very frequently. 
\begin{lem}\label{fieldbds}
    For any $s>3/2$, there holds
    \begin{gather*}
        \|V*\rho\|_{L^\infty} \les \|\rho\|_{L^1 \cap L^{5/3}},\\
        \|E\|_{W^{1,\infty}} \les \|\phi\|_{H^s}^2, \quad \|E\|_{L^\infty} \les \|\phi\|_{L^6}^2 \ln\Big( e+\frac{\|\phi\|_{H^s}^2}{\|\phi\|_{L^6}^2}\Big).
    \end{gather*}
\end{lem}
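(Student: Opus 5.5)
The three bounds in Lemma \ref{fieldbds} are direct applications of Lemmas \ref{HLS}, \ref{loginterp} and \ref{potest}, combined with Sobolev embedding for $\phi$. Throughout, $V=\gamma/(4\pi|x|)$, $E=-\nabla V*|\phi|^2$ with $|\nabla V|\les|x|^{-2}$, and $\nabla E=-\nabla^2V*|\phi|^2$.

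\textbf{First bound.} I apply Lemma \ref{potest}(i) with $\alpha=1$, so $r_1=3/2$, and take $p=1$, $q=5/3$. Since $1<3/2<5/3$, this gives $\|V*\rho\|_{L^\infty}\les\|\rho\|_{L^1}^\theta\|\rho\|_{L^{5/3}}^{1-\theta}$. Young's inequality bounds the right side by $\|\rho\|_{L^1\cap L^{5/3}}$.

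\textbf{Second bound.} For $\|E\|_{L^\infty}$, I apply Lemma \ref{potest}(i) with $\alpha=2$, so $r_2=3$, and take $p=1$, $q=\infty$. Then $\|E\|_{L^\infty}\les\||\phi|^2\|_{L^1\cap L^\infty}\les\|\phi\|_{L^2}^2+\|\phi\|_{L^\infty}^2\les\|\phi\|_{H^s}^2$, using $H^s\hookrightarrow L^\infty$ for $s>3/2$.

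For $\nabla E$, I use Lemma \ref{potest}(iii) with $u=|\phi|^2$, $r=1$, $R=2$ and some $p\in(3,\infty)$. This gives
$$\|\nabla E\|_{L^\infty}\les\|\nabla|\phi|^2\|_{L^p}+\||\phi|^2\|_{L^\infty}+\||\phi|^2\|_{L^1}.$$
The last two terms are bounded by $\|\phi\|_{H^s}^2$ as before. For the first, I write $\|\nabla|\phi|^2\|_{L^p}\les\|\phi\|_{L^\infty}\|\nabla\phi\|_{L^p}$. Since $\nabla\phi\in H^{s-1}\hookrightarrow L^p$ for $p=\frac{6}{3-2(s-1)}=\frac{6}{5-2s}$, and $s>3/2$ gives $p>3$, this term is also $\les\|\phi\|_{H^s}^2$. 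If $s-1\ge 3/2$ any $p\in(3,\infty)$ works instead. This is where $s>3/2$ is needed.

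\textbf{Third bound.} I use the endpoint estimate of Lemma \ref{HLS} with $d=3$, $\alpha=2$, so $\frac{d}{d-\alpha}=3$. Applied componentwise to $\nabla V*|\phi|^2$, it gives $\|E\|_{L^\infty}\les\||\phi|^2\|_{\dot B^0_{3,1}}$. Lemma \ref{loginterp} with $u=|\phi|^2$ then yields
$$\|E\|_{L^\infty}\les\|\phi\|_{L^6}^2\ln\Big(e+\frac{\||\phi|^2\|_{L^1}+\|\nabla|\phi|^2\|_{L^3}}{\|\phi\|_{L^6}^2}\Big).$$
It remains to bound the numerator by $\|\phi\|_{H^s}^2$. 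First, $\||\phi|^2\|_{L^1}=\|\phi\|_{L^2}^2$. Second, $\|\nabla|\phi|^2\|_{L^3}\les\|\phi\|_{L^\infty}\|\nabla\phi\|_{L^3}$, and $H^{s-1}\hookrightarrow L^3$ since $s-1>1/2$. Because $\ln(e+x)$ is increasing, replacing the numerator by $C\|\phi\|_{H^s}^2$ only changes the bound by a constant factor: $\ln(e+Cx)\le \ln C'+\ln(e+x)\les\ln(e+x)$, using $\ln(e+x)\ge1$.

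I expect no serious obstacle here. The only care needed is choosing exponents so that each Sobolev embedding applies when $s$ is close to $3/2$, and absorbing constants inside the logarithm.
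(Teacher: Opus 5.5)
Your proposal is correct and follows the paper's proof step for step. You use Lemma \ref{potest}(i) with $(p,q)=(1,5/3)$ and $(1,\infty)$, Lemma \ref{potest}(iii) with $r=1$, $R=2$, $p=\frac{6}{5-2s}$, and then the endpoint bound of Lemma \ref{HLS} followed by Lemma \ref{loginterp} and Sobolev embedding. Your extra care in keeping the $L^1$ term inside the logarithm and in treating $s\ge 5/2$, where $\frac{6}{5-2s}$ is no longer admissible, only tidies details the paper glosses over.
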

\begin{proof}
    The estimate for $V*\rho$ is immediate from the first statement in Lemma \ref{potest} after taking $p=1,q=5/3$ (noting that $r_1 = 3/2 \in (1,5/3)$) and using Young's inequality $a^\theta b^{1-\theta} \les a+b$.\\
    
    The first crude estimate for $E$ follows from the first statement in Lemma \ref{potest} with $p=1, q=\infty$ followed by Sobolev embedding: 
    $$\|E\|_{L^\infty}\les \||\phi|^2\|_{L^1} + \||\phi|^2\|_{L^\infty} \les \|\phi\|_{H^s}^2.$$
    The estimate for $\nabla_xE$ follows from the third statement in Lemma \ref{potest} with $$p=\frac{6}{3-2(s-1)} = \frac{6}{5-2s},$$ which is larger than $3$ since $s>3/2$ and, say, $R=2, r=1$ and Sobolev embedding:
    \begin{align*}
        \|\nabla_x E\|_{L^\infty} &\les \|\nabla_x|\phi|^2\|_{L^{\frac{6}{5-2s}}} + \||\phi|^2\|_{L^\infty} + \| |\phi|^2\|_{L^1}\\
        &\les \|\phi\|_{L^\infty}\|\nabla_x\phi\|_{L^{\frac{6}{5-2s}}} + \|\phi\|_{H^s}^2  + \|\phi\|_{L^2}^2 \\
        &\les \|\phi\|_{H^s}\|\nabla_x\phi\|_{H^{s-1}}+\|\phi\|_{H^s}^2 \les \|\phi\|_{H^s}^2.
    \end{align*}
    Finally we prove the more refined estimate for $\|E\|_{L^\infty}$. Lemma \ref{HLS} followed by Lemma \ref{loginterp}, together with the Sobolev embeddings $\dot{H}^{1/2}\hookrightarrow L^3$ and $H^s\hookrightarrow L^\infty$ give
 \begin{align*}
        \|E\|_{L^\infty} &\les \||\phi|^2\|_{\dot{B}^0_{3,1}}\\
        &\les \|\phi\|_{L^6}^2 \ln\Big(e+\frac{\|\nabla |\phi|^2\|_{L^3}}{\|\phi\|_{L^6}^2} \Big)\\
        &\les \|\phi\|_{L^6}^2 \ln\Big(e+\frac{\|\nabla\phi\|_{L^3}\|\phi\|_{L^\infty}}{\|\phi\|_{L^6}^2} \Big)\\
        &\les \|\phi\|_{L^6}^2 \ln\Big(e+\frac{\|\nabla\phi\|_{\dot{H}^{1/2}}\|\phi\|_{H^s}}{\|\phi\|_{L^6}^2} \Big)\\
        &\les \|\phi\|_{L^6}^2 \ln\Big(e+\frac{\|\phi\|_{H^s}^2}{\|\phi\|_{L^6}^2} \Big).
\end{align*} 
\end{proof}

\subsection{Conservation laws and a priori estimates}

In this section, we prove a few key identities which give a priori control over the solution. We will assume the solution is smooth; modifications for a regularized version of the system will be discussed below.\\

Define the momentum and probability current as follows:

$$j(t,x) = \int v f(t,x,v)dv, \quad \mathcal{J}(t,x) = \Imag(\overline{\phi}\nabla_x\phi).$$

Then a straightfoward computation gives the identities

$$\p_t \rho + \nabla_x\cdot j = 0, \quad \p_t |\phi|^2 + \nabla_x \cdot \mathcal{J} = 0.$$

Integrating and using the divergence theorem then yields conservation of mass and probability:
$$\|\rho(t)\|_{L^1_x} = \|f_0\|_{L^1_{x,v}}, \quad \|\phi(t)\|_{L^2_x} = \|\phi_0\|_{L^2_x}.$$

We also have conservation of energy. Let
$$K_\phi(t) = \frac12 \int|\nabla_x\phi|^2 dx, \quad P(t) = \int (V*\rho)|\phi|^2 dx,\quad K_f(t) = \frac12 \iint |v|^2f \ dv,$$
$$\mathcal{E}(t) = K_\phi(t) + P(t) + K_f(t).$$

Here, $\mathcal{E}$ is the total energy of the system, $K_\phi$ is the bosonic kinetic energy, $K_f$ is the fermionic kinetic energy, and $P$ is the potential energy.

\begin{lem}\label{consenergy}
    For smooth solutions, the energy $\mathcal{E}(t)$ is conserved: $\mathcal{E}(t)=\mathcal{E}(0)$.
\end{lem}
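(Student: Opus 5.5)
We differentiate each of the three pieces of $\mathcal{E}(t)$ in time and show that the derivatives cancel. Smoothness and sufficient decay of $(f,\phi)$ are assumed throughout, so every integration by parts produces no boundary terms.

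\textbf{Fermionic kinetic energy.} Multiply the Vlasov equation by $\frac12|v|^2$ and integrate over $(x,v)$. The transport term $v\cdot\nabla_x f$ integrates to zero, since $\frac12|v|^2$ does not depend on $x$. For the force term, integrate by parts in $v$ and use that $E$ does not depend on $v$:
$$-\iint \tfrac12|v|^2 E\cdot\nabla_v f\,dxdv = \iint (E\cdot v) f\,dxdv = \int E\cdot j\,dx.$$
Hence $K_f'(t)=\int E\cdot j\,dx$. Writing $E=-\nabla(V*|\phi|^2)$, integrating by parts once more and using the continuity equation $\partial_t\rho=-\nabla\cdot j$, we get
$$K_f'(t) = \int (V*|\phi|^2)\,\nabla\cdot j\,dx = -\int (V*|\phi|^2)\,\partial_t\rho\,dx.$$

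\textbf{Bosonic kinetic energy.} Set $W=V*\rho$, which is real-valued. Then
$$K_\phi'(t)=\Real\int \nabla\bar\phi\cdot\nabla\partial_t\phi\,dx = -\Real\int \Delta\bar\phi\,\partial_t\phi\,dx.$$
From the Hartree equation, $\Delta\bar\phi = 2\big(W\bar\phi + i\partial_t\bar\phi\big)$. Substituting,
$$K_\phi'(t) = -2\Real\int W\bar\phi\,\partial_t\phi\,dx - 2\Real\, i\int|\partial_t\phi|^2\,dx = -\int W\,\partial_t|\phi|^2\,dx,$$
because the second integral is purely imaginary.

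\textbf{Potential energy.} $V$ is even, so convolution with $V$ is symmetric in the $L^2$ pairing, i.e. $\int (V*a)\,b = \int a\,(V*b)$. Therefore
$$P'(t) = \int (V*\partial_t\rho)\,|\phi|^2\,dx + \int (V*\rho)\,\partial_t|\phi|^2\,dx = \int (V*|\phi|^2)\,\partial_t\rho\,dx + \int W\,\partial_t|\phi|^2\,dx.$$

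\textbf{Conclusion.} The first term in $P'$ cancels $K_f'$, and the second term cancels $K_\phi'$. Hence $\mathcal{E}'(t)=0$, which gives $\mathcal{E}(t)=\mathcal{E}(0)$.

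The only point needing care is the bookkeeping of signs and factors of $\frac12$. In particular, the potential term in the energy has coefficient $1$, not $\frac12$ as it would for Hartree alone, because it couples two distinct species. Each species contributes exactly one of the two cross terms in $P'$, so no symmetrization factor appears.
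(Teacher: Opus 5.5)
Your proof is correct and follows the same route as the paper. Like the paper, you differentiate $K_f$, $K_\phi$ and $P$ separately, use the continuity equation $\p_t\rho+\nabla_x\cdot j=0$ together with the evenness of $V$, and observe that the two cross terms in $P'(t)$ cancel $K_f'$ and $K_\phi'$. The only cosmetic difference is in $K_\phi'$: you substitute $\Delta\bar\phi$ from the conjugated Hartree equation, whereas the paper differentiates the equation and passes through the current $\mathcal{J}$. Your fermionic computation, ending in $-\int (V*|\phi|^2)\,\p_t\rho\,dx=-\int (V*\p_t\rho)\,|\phi|^2\,dx$, is also stated more accurately than the paper's last displayed line there, which has a typo.
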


\begin{proof}
    Using the equation and integrating by parts, we get
\begin{align*}
    \frac12\frac{d}{dt} \int |\nabla \phi|^2 dx &= \Real \int \p_t \nabla \phi \cdot \overline{\nabla \phi} \ dx\\
    &= \Real \int -i\big( -\tfrac12 \Delta \nabla\phi + \nabla(V*\rho)\phi + (V*\rho)\nabla\phi \big)\cdot\overline{\nabla \phi} \ dx\\
    &= \Imag \int \frac12 |\nabla^2\phi|^2 + \nabla(V*\rho)\cdot\phi\overline{\nabla\phi} + (V*\rho)|\nabla\phi|^2 \ dx\\
    &= -\int \nabla(V*\rho)\cdot \mathcal{J} \ dx\\
    &= \int (V*\rho)\nabla\cdot \mathcal{J} \ dx\\
    &= -\int (V*\rho)\p_t |\phi|^2 dx,
\end{align*}
\begin{align*}
    \frac12\frac{d}{dt}\iint |v|^2 f dxdv &= -\int |v|^2 \big(v\cdot\nabla_x f + E\cdot\nabla_v f\big) dxdv\\
    &= \iint v\cdot Ef \ dxdv\\
    &= \int (V*|\phi|^2)\nabla_x\cdot j \ dx\\
    &= - \int (V*|\phi|^2)\p_t \rho \ dx\\
    &= -\int (V*\rho)\p_t|\phi|^2 dx.
\end{align*}
In the final line we used the fact that $V$ is even. Collecting terms, we arrive at the result:
\begin{gather*}
    \frac{d}{dt}\big( K_\phi(t) + K_f(t)\big) = - \int (V*\rho)\p_t|\phi|^2 + (V*\p_t\rho)|\phi|^2 dx = -\frac{d}{dt}P(t).
\end{gather*}
\end{proof}

\begin{prop}\label{atrbd}
    Conservation of energy implies $$\|\rho(t)\|_{L^{5/3}}+\|\nabla_x\phi(t)\|_{L^2}\leq C_0,$$
    where $C_0$ is a constant depending on the initial data.
\end{prop}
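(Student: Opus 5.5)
The plan is to treat the repulsive and attractive cases separately. In both, the basic tool is the standard kinetic interpolation bound
$$\|\rho\|_{L^{5/3}}\les \|f\|_{L^\infty}^{2/5}\Big(\iint |v|^2 f\,dxdv\Big)^{3/5}.$$
It follows by splitting $\rho(x)=\int_{|v|\le R}f\,dv+\int_{|v|>R}f\,dv\le CR^3\|f\|_{L^\infty}+R^{-2}\int|v|^2f\,dv$ and optimizing in $R$ pointwise in $x$. We also use that $\|f(t)\|_{L^\infty}=\|f_0\|_{L^\infty}$, since $f$ is transported by a measure-preserving flow, together with conservation of mass and probability. For smooth solutions this means $\|f\|_{L^\infty}$, $\|\rho\|_{L^1}$ and $\|\phi\|_{L^2}$ are all constant in time. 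The goal is therefore to bound $K_\phi(t)+K_f(t)$ by a constant depending on the data.

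\textbf{Repulsive case ($\gamma=1$).} Here $V\ge0$, so $P(t)\ge 0$. Since $f\ge0$, which is preserved, we get $K_\phi(t)+K_f(t)\le\mathcal{E}(t)=\mathcal{E}(0)$ directly from Lemma \ref{consenergy}. The interpolation bound then gives $\|\rho\|_{L^{5/3}}\les \mathcal{E}(0)^{3/5}$, and $\|\nabla\phi\|_{L^2}^2\le 2\mathcal{E}(0)$. Finiteness of $\mathcal{E}(0)$ follows from the compact support of $f_0$, from $\phi_0\in H^1$, and from $|P(0)|\le\|V*\rho_0\|_{L^\infty}\|\phi_0\|_{L^2}^2$ together with Lemma \ref{fieldbds}.

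\textbf{Attractive case ($\gamma=-1$).} This is the main obstacle: we must show that $|P|$ is dominated by a sublinear power of the kinetic energies. We bound it in two steps,
$$|P(t)|\le \|V*\rho\|_{L^\infty}\|\phi\|_{L^2}^2\les \|\rho\|_{L^1\cap L^{5/3}}\|\phi_0\|_{L^2}^2,$$
using Lemma \ref{fieldbds}. However, $\|\rho\|_{L^{5/3}}\les K_f^{3/5}$ only gives a power $3/5<1$ when $\|V*\rho\|_{L^\infty}$ is controlled through the $L^{5/3}$ endpoint as stated. To be careful I would instead apply Lemma \ref{potest}(i) with $\alpha=1$, $p=1$ and $q=5/3$. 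There $\theta$ satisfies $\frac23=\theta+\frac35(1-\theta)$, so $\theta=1/6$, and hence
$$|P|\les \|\rho\|_{L^1}^{1/6}\|\rho\|_{L^{5/3}}^{5/6}\les K_f^{1/2}.$$
Alternatively, one can put the weight on $\phi$ instead: $|P|\le\|\rho\|_{L^1}\|V*|\phi|^2\|_{L^\infty}$, and by HLS and Sobolev this is at most $C\|\phi\|_{L^2}\|\nabla\phi\|_{L^2}\les K_\phi^{1/2}$. Either way,
$$K_\phi+K_f\le \mathcal{E}(0)+|P|\le \mathcal{E}(0)+C(K_\phi+K_f)^{1/2},$$
and Young's inequality absorbs the last term to give $K_\phi+K_f\le C$. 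The constant depends only on $\mathcal{E}(0)$, $\|f_0\|_{L^1\cap L^\infty}$ and $\|\phi_0\|_{L^2}$.

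Finally, feeding this bound back into the interpolation inequality and into $\|\nabla\phi\|_{L^2}^2=2K_\phi$ yields $C_0$. For the regularized system used later, the same argument applies as long as the regularization preserves positivity of $f$, its $L^\infty$ and $L^1$ norms, and an analogue of the energy identity.
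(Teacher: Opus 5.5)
Your proof is correct, and its structure matches the paper's. Both use the kinetic interpolation $\|\rho\|_{L^{5/3}}\lesssim \|f_0\|_{L^\infty}^{2/5}K_f^{3/5}$, the trivial bound from $P\ge 0$ in the repulsive case, and, in the attractive case, a bound on $|P|$ of homogeneity $1/2$ in the kinetic energies followed by Young absorption. The one real difference is how you estimate $|P|$.

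The paper uses H\"older and HLS in the form $|P|\le\|V*\rho\|_{L^6}\||\phi|^2\|_{L^{6/5}}\lesssim\|\rho\|_{L^{6/5}}\|\phi\|_{L^{12/5}}^2$. It then interpolates both factors, $\rho$ between $L^1$ and $L^{5/3}$ and $\phi$ between $L^2$ and $L^6$, to get the symmetric bound $|P|\le C_0K_\phi^{1/4}K_f^{1/4}$. You instead put the whole potential in $L^\infty$ via Lemma \ref{potest}(i) with $p=1$, $q=5/3$ (your $\theta=1/6$ is right) and use conservation of $\|\phi\|_{L^2}$. This gives $|P|\lesssim K_f^{1/2}$, or dually $|P|\le\|\rho\|_{L^1}\|V*|\phi|^2\|_{L^\infty}\lesssim K_\phi^{1/2}$. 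Your route is slightly more economical: one interpolation and no HLS. The paper's spreads the loss over both species. Since both yield $|P|\lesssim (K_\phi+K_f)^{1/2}$, the absorption step is identical.

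Two minor remarks. First, the bound $\|V*|\phi|^2\|_{L^\infty}\lesssim\|\phi\|_{L^2}\|\nabla\phi\|_{L^2}$ should be justified by Hardy's inequality, or by Lemma \ref{potest}(i) with $p=1$, $q=3$ followed by Sobolev. HLS does not give it, since its $L^{3/2}\to L^\infty$ endpoint fails. Second, your hesitation about the cruder bound $\|V*\rho\|_{L^\infty}\lesssim\|\rho\|_{L^1\cap L^{5/3}}\lesssim 1+K_f^{3/5}$ is unnecessary. The exponent $3/5<1$ is already sublinear, so Young's inequality absorbs it just as well.
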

\begin{proof}
    In the repulsive case $\gamma=1$, all terms in the energy are nonnegative, so one gets immediate bounds on each term. In the attractive case $\gamma = -1$, one does not get an immediate bound on $K_\phi$ and $K_f$. So we first show that $P(t)$, the term of indefinite sign, can be controlled sublinearly by the other two. First we show $\|\rho(t)\|_{L^{5/3}}$ is controlled sublinearly by $K_f(t)$, which immediately gives the bound on $\|\rho(t)\|_{L^{5/3}}$ in the repulsive case:
    \begin{align*}
        \rho(t,x) &\leq \int_{|v|\leq R} f(t,x,v) \ dv + R^{-2}\int_{|v|>R} |v|^2 f(t,x,v) dv\\
        &\les \|f_0\|_{L^\infty_{x,v}} R^3 + R^{-2}\int |v|^2 f(t,x,v)  dv. 
    \end{align*}
    Optimizing in $R$ by choosing $R = \|f_0\|_{L^\infty_{x,v}}^{-1/5}\big( \int |v|^2 f(t,x,v)dv\big)^{1/5}$ gives 
    $$\rho(t,x) \les \|f_0\|_{L^\infty_{x,v}}^{2/5}\Big( \int |v|^2 f(t,x,v) dv\Big)^{3/5}.$$

    By raising both sides to the power $5/3$ and integrating, we see that 
    $$\|\rho(t)\|_{L^{5/3}}\les K_f(t)^{3/5}.$$
    
    Next, using H\"{o}lder and Lemma \ref{HLS}, 
    \begin{align*}
        |P(t)| \leq \|V*\rho\|_{L^6} \||\phi|^2\|_{L^{6/5}} \les \|\rho\|_{L^{6/5}}\|\phi\|^2_{L^{12/5}}.
    \end{align*}
    By interpolation, Sobolev embedding, and conservation of mass,
    \begin{align*}
        \|\rho\|_{L^{6/5}}&\les \|\rho\|_{L^1}^{7/12}\|\rho\|_{L^{5/3}}^{5/12} \les K_f^{1/4}\\
        \|\phi\|_{L^{5/12}}&\les \|\phi\|_{L^2}^{3/4}\|\phi\|_{L^6}^{1/4} \les K_\phi^{1/8}.
    \end{align*}
    Therefore, there exists a constant $C_0$ depending on $\|f_0\|_{L^1_{x,v}\cap L^\infty_{x,v}}$ and $\|\phi_0\|_{L^2}$ such that
    $$|P(t)| \leq C_0 K_\phi(t)^{1/4} K_f(t)^{1/4}.$$
    By applying Young's inequality twice,
    $$|P(t)| \leq C_0 (K_\phi(t)+K_f(t))^{1/2} \leq \frac{1}{2}C_0^2 + \frac12 (K_\phi(t)+K_f(t)).$$
    
    Therefore, in the attractive case we have 
    \begin{align*}
        \mathcal{E}(0) = K_\phi(t) + K_f(t) - |P(t)|\geq  \frac12(K_\phi(t)+K_f(t))-\frac12 C_0^2.
    \end{align*}
    Therefore, $K_\phi(t)+K_f(t) \leq\mathcal{E}(0) + \frac12 C_0^2$, as desired.
\end{proof}

\section{Global well-posedness}

\subsection{The regularized system}

Let $\chi^\eps(x) = \eps^{-3}\chi(\eps^{-1}x)$ with $\chi$ smooth, radial, nonnegative, and compactly supported be a standard smooth mollifier such that $\|\chi^\eps * u\|_{L^p} \leq \|u\|_{L^p}$ and $\chi^\eps * u \to u$ strongly in $L^p$ as $\eps \to 0$ for all $1\leq p< \infty$. Let $ (f^\eps_0, \phi^\eps_0) := (\chi^\eps *_v \chi^\eps *_x f_0, \chi^\eps * \phi_0)$, and let $(f^\eps,\phi^\eps)$ solve the Vlasov-Hartree equation with initial data $ (f^\eps_0, \phi^\eps_0)$ and potential $V$ replaced by $V^\eps := \chi^\eps * V$, i.e.

\begin{equation}\label{VHreg}
\begin{split}
    &\partial_t f^\eps + v\cdot\nabla_x f^\eps + E^\eps\cdot\nabla_v f^\eps = 0, \quad E^\eps = -\nabla_x V^\eps*|\phi^\eps|^2,\\
    &i\partial_t \phi^\eps + \frac12 \Delta \phi^\eps = (V^\eps*\rho^\eps)\phi^\eps, \quad \rho^\eps(t,x) = \int f^\eps(t,x,v)dv,\\
    &(f^\eps,\phi^\eps)|_{t=0} = (f^\eps_0, \phi^\eps_0), \quad V^\eps := \chi^\eps * V.
\end{split}
\end{equation}

Consider the space
$$(f^\eps,\phi^\eps)\in X_T := L^\infty([0,T];L^1\cap L^\infty)\times C([0,T];H^s).$$
By a straightforward contraction argument it is possible to prove that for each $\eps$, there is a $T_\eps>0$ such that the regularized system above is locally well-posed in $X_{T_\eps}$. Moreover, since $E^\eps$ is smooth the characteristic ODEs admit a solution $X^\eps,V^\eps$ solving

\begin{equation}
\begin{cases}
    \tfrac{d}{ds}X^\eps(s,t,x,v) = V^\eps(s,t,x,v) & X^\eps(t,t,x,v) = x,\\
    \tfrac{d}{ds}V^\eps(s,t,x,v) = E^\eps(s,X^\eps(s,t,x,v)) & V^\eps(t,t,x,v) = v.
\end{cases}
\end{equation}

\subsection{Uniform bounds and compactness}

\textbf{a. Conservation laws and a priori bounds for the regularized system.} The energy for the regularized system reads

$$\mathcal{E}^\eps(t) = \underbrace{\frac12 \int |\nabla_x\phi^\eps|^2 dx}_{K^\eps_{\phi}(t)} + \underbrace{\frac12 \iint |v|^2 f^\eps \ dxdv}_{K^\eps_{f}(t)} + \underbrace{\int (V^\eps*\rho^\eps)|\phi^\eps|^2 dx}_{P^\eps(t)}.$$

\begin{lem}\label{prelimunifbds}
    The energy $\mathcal{E}^\eps(t)$ is conserved for the regularized system, i.e. $\mathcal{E}^\eps(t) = \mathcal{E}^\eps(0)$, and the following bounds hold uniformly in $\eps$ for all $t$ for which the solution exists:
    $$\|f^\eps(t)\|_{L^p_{x,v}} + \iint |v|^2 f^\eps(t) \ dxdv + \|\rho^\eps(t)\|_{L^{5/3}} + \|\phi^\eps(t)\|_{H^1} \leq C_0,$$
    where $p\in[1,\infty]$ and $C_0$ depends only on the initial data $f_0,\phi_0$.
\end{lem}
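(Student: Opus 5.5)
The plan is to repeat the argument of Lemma \ref{consenergy} and Proposition \ref{atrbd} for the regularized system (\ref{VHreg}), keeping track of every place where the mollification enters.

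\textbf{Step 1: conservation of energy.} For fixed $\eps$ the solution is smooth enough to justify the manipulations. $E^\eps$ is smooth with bounded derivatives, and $\phi^\eps$ can be propagated in higher $H^k$ because $V^\eps*\rho^\eps$ is smooth. Where necessary, we first check the identities for smooth data and then use continuous dependence. The computation in Lemma \ref{consenergy} uses only two facts about $V$: that it is even, so that $\int (V*|\phi|^2)\p_t\rho = \int (V*\p_t\rho)|\phi|^2$, and the continuity equations for $\rho$ and $|\phi|^2$. Since $\chi$ is radial, $V^\eps=\chi^\eps*V$ is even, and the continuity equations hold verbatim for the regularized system. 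Hence $\frac{d}{dt}(K^\eps_\phi+K^\eps_f) = -\frac{d}{dt}P^\eps$, which gives $\mathcal{E}^\eps(t)=\mathcal{E}^\eps(0)$.

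\textbf{Step 2: $L^p$ norms of $f^\eps$ and mass.} $E^\eps$ depends only on $x$, so the phase-space field $(v,E^\eps)$ is divergence free. The characteristic flow is therefore measure preserving and $f^\eps(t)=f^\eps_0\circ(\Phi^\eps_t)^{-1}$. This gives $\|f^\eps(t)\|_{L^p_{x,v}}=\|f^\eps_0\|_{L^p_{x,v}}\le\|f_0\|_{L^p_{x,v}}$ for all $p\in[1,\infty]$, by the contraction property of the mollifier (Young's inequality covers the case $p=\infty$). Likewise $\|\phi^\eps(t)\|_{L^2}=\|\phi^\eps_0\|_{L^2}\le\|\phi_0\|_{L^2}$.

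\textbf{Step 3: uniform bound on $\mathcal{E}^\eps(0)$.} This is the step I expect to need the most care.
\begin{itemize}
\item $K^\eps_\phi(0)\le K_\phi(0)$ by the contraction property.
\item $\iint|v|^2 f_0^\eps \le 2\iint|v|^2f_0 + 2\eps^2\|f_0\|_{L^1}\int|w|^2\chi(w)\,dw$, which is finite because $f_0$ has compact support.
\item For the potential term, $|P^\eps(0)|\le \|V*(\chi^\eps*\rho^\eps_0)\|_{L^\infty}\|\phi_0\|_{L^2}^2$. Lemma \ref{fieldbds} bounds the sup norm by $\|\chi^\eps*\rho^\eps_0\|_{L^1\cap L^{5/3}}\le \|\rho_0^\eps\|_{L^1\cap L^{5/3}}$. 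In turn, $\|\rho_0^\eps\|_{L^{5/3}}$ is bounded uniformly using the pointwise inequality from Proposition \ref{atrbd} together with the bounds on $\|f_0^\eps\|_{L^\infty}$ and on the kinetic energy.
\end{itemize}

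\textbf{Step 4: closing the bounds.} We rerun Proposition \ref{atrbd} with $V^\eps$ in place of $V$. The bound $\|\rho^\eps\|_{L^{5/3}}\les \|f^\eps\|_{L^\infty}^{2/5}(K^\eps_f)^{3/5}$ is unchanged. For the potential term we write $|P^\eps|\le \|V*(\chi^\eps*\rho^\eps)\|_{L^6}\||\phi^\eps|^2\|_{L^{6/5}}$ and apply HLS and mollifier contraction. This yields $|P^\eps|\le C_0(K^\eps_\phi K^\eps_f)^{1/4}$ with $C_0$ independent of $\eps$. As in Proposition \ref{atrbd}, the attractive case follows by Young's inequality; the repulsive case is immediate because every term is nonnegative. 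Combined with the $L^2$ bound, this gives the stated $H^1$, $L^{5/3}$ and kinetic energy bounds uniformly in $\eps$ and $t$.
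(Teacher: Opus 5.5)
Your proposal is correct and follows essentially the same route as the paper: energy conservation from the evenness of $V^\eps$, a uniform-in-$\eps$ bound on $\mathcal{E}^\eps(0)$, and then rerunning Proposition \ref{atrbd} with $V^\eps$ in place of $V$. The only differences are cosmetic. You bound $K^\eps_f(0)$ through the second moment of the mollifier, whereas the paper uses the support bound $\supp f^\eps_0\subset\{|x|,|v|\le 2k\}$. You also make explicit the preservation of $\|f^\eps\|_{L^p}$ and $\|\phi^\eps\|_{L^2}$, which the paper uses without comment.
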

\begin{proof}
The regularized system is nearly identical to the original, except $V$ is replaced by $V^\eps$. An inspection of the proof of Lemma \ref{consenergy} shows that only the evenness of $V$ was used, which is also true for $V^\eps$ since $\chi^\eps$ is even. Hence, energy is conserved for the regularized system.\\

Next we show that $\mathcal{E}^\eps(0)$ is uniformly bounded by the initial data, which, together with Proposition \ref{atrbd}, will imply the stated uniform bounds. Observe that if $f_0$ is supported on $|x|,|v|\leq k$, then by elementary properties of convolutions, $f^\eps_0$ is supported in the sumset $$\supp f_0 + \{|x|,|v|\leq \eps\}\subset \{|x|,|v|\leq 2k\}$$ provided (say) $\eps\leq k$, which we assume in what follows. Therefore, $K^\eps_f(0) \les (2k)^8 \|f_0\|_{L^\infty_{x,v}}$.  By the $L^p$ boundedness of mollifiers, $K^\eps_\phi(0) \leq \|\phi_0\|_{H^1}$, and therefore using the arguments of Proposition \ref{atrbd}, we also have $|P^\eps(0)|\les C_0$. As a result, conservation of energy for the regularized system combined with the arguments of Proposition \ref{atrbd} gives the stated uniform in $\eps$ bounds.

\end{proof}

\textbf{b. Uniform estimates on $\|\phi^\eps\|_{H^s}$ and $Q(t)$.} In this section we prove uniform in $\eps$ bounds which allow us to conclude the solution is global. 

\begin{prop}\label{unifphibds}
    For any $T<\infty$ such that the solution $(f^\eps,\phi^\eps)$ to the regularized Vlasov-Hartree system exists on $[0,T]$, there exists a constant $C_T$ depending only on $T$ and the initial data such that 
    $$\sup_{t\in[0,T]}\big(\|\phi^\eps(t)\|_{H^s}+Q^\eps(t)\big) \leq C_T.$$
\end{prop}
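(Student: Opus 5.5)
The plan follows the sketch in the introduction. First derive an $H^s$ energy estimate for $\phi^\eps$, then control $Q^\eps$ through $\|E^\eps\|_{L^\infty}$ using the logarithmic bound of Lemma \ref{fieldbds}, and close with a nonlinear (Osgood-type) Gronwall inequality.

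\textbf{Step 1: $H^s$ estimate.} Apply the Duhamel formula, or equivalently differentiate $\|\jbrac{\nabla}^s\phi^\eps\|_{L^2}^2$; the unitary linear flow drops out. This gives
$$\|\phi^\eps(t)\|_{H^s}\le \|\phi_0\|_{H^s}+\int_0^t \|(V^\eps*\rho^\eps)\phi^\eps\|_{H^s}\,d\tau .$$
To bound the product we use a fractional Leibniz (Kato--Ponce) estimate:
$$\|W\phi\|_{H^s}\les \|W\|_{L^\infty}\|\phi\|_{H^s}+\|\nabla W\|_{L^\infty}\|\phi\|_{H^{s-1}}+\|\nabla^2 W\|_{L^{p}}\|\phi\|_{L^{q}},$$
where $W=V^\eps*\rho^\eps$ and the last term handles the top fractional derivative falling on $W$ (recall $s<2$). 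Then:
\begin{itemize}
\item By Lemma \ref{fieldbds} and Lemma \ref{prelimunifbds}, $\|W\|_{L^\infty}\les C_0$.
\item Lemma \ref{potest}(i) bounds $\|\nabla W\|_{L^\infty}$ by $\|\rho^\eps\|_{L^1\cap L^\infty}$.
\item Lemma \ref{potest}(ii) gives $\|\nabla^2W\|_{L^p}\les\|\rho^\eps\|_{L^p}$.
\end{itemize}
Since $f^\eps$ is transported, $\|\rho^\eps(\tau)\|_{L^\infty}\les \|f_0\|_{L^\infty}Q^\eps(\tau)^3$, and interpolation with the $L^1$ bound controls every $L^p$ norm. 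This yields
$$\|\phi^\eps(t)\|_{H^s}\les \|\phi_0\|_{H^s}+\int_0^t (1+Q^\eps(\tau))^3\|\phi^\eps(\tau)\|_{H^s}\,d\tau .$$
The paper claims the power $(1+Q)$ rather than $(1+Q)^3$. For existence only some polynomial power matters, so I would not fight for the exact exponent. If possible, I would exploit $\rho^\eps\in L^{5/3}$ uniformly, which reduces the power of $Q$.

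\textbf{Step 2: velocity support.} Along characteristics, $|V^\eps(\tau)|\le |v|+\int_0^\tau\|E^\eps\|_{L^\infty}$. Taking $v$ in the initial support (radius $2k$) gives
$$Q^\eps(t)\le 2k+\int_0^t\|E^\eps(\tau)\|_{L^\infty}d\tau .$$
By Lemma \ref{fieldbds} (the mollifier commutes and is harmless), together with $\|\phi^\eps\|_{L^6}\les\|\phi^\eps\|_{H^1}\le C_0$ and the monotonicity of $x\ln(e+A/x)$ in $x$, we get
$$\|E^\eps\|_{L^\infty}\les C_0\ln(e+\|\phi^\eps\|_{H^s}).$$
Hence $Q^\eps(t)\les 1+\int_0^t\ln(e+\|\phi^\eps\|_{H^s})$.

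\textbf{Step 3: closing.} Set $Y(t)=\sup_{[0,t]}\|\phi^\eps\|_{H^s}$. Then $Q^\eps(t)\les 1+t\ln(e+Y(t))$, and Step 1 gives
$$Y(t)\les \|\phi_0\|_{H^s}+\int_0^t (1+T)^3\ln^3(e+Y(\tau))\,Y(\tau)\,d\tau .$$
This is the main obstacle: with the cubed logarithm, $\int^\infty \frac{dy}{y\ln^3 y}<\infty$, so Osgood's criterion fails and the inequality would only give a bounded time. To repair it, the dependence of Step 1 on $Q$ must be at most linear. I would first try to remove the $Q^3$ loss in the top-order term, for example via Lemma \ref{potest}(iii), which estimates $\nabla^2 W$ in $L^\infty$ by $\|\rho\|_{L^\infty}$ only logarithmically; alternatively, put the fractional derivative on $\rho$ only through $\|\nabla W\|_{\dot W^{s-1,p}}$ with $p$ near $3$, which is controlled by $\|\rho\|_{L^{p}}$ and hence by low powers of $Q$ after interpolating with $L^{5/3}$. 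With the power $(1+Q)^\beta$, I would then bound $Q^\beta$ by substituting Step 2 before taking the supremum. Writing $Q\le C(1+\int_0^t\ln(e+Y))$, the resulting coupled system for $(Y,Q)$ has vector field growing like $y\,q$ and $\ln y$, and the quantity $Z=\ln(e+Y)+Q$ satisfies $Z'\les Z$ when $\beta\le1$ (since $(\ln Y)'\les 1+Q$ and $Q'\les \ln Y$). This yields $Z\le Ce^{Ct}$, hence both bounds with constants depending only on $T$ and the data, uniformly in $\eps$. So the key technical work is achieving $\beta\le 1$ in Step 1, and I would devote most effort to that product/commutator estimate.
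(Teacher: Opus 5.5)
Your Steps 2 and 3 follow the paper. The velocity support is bounded by $2k+\int_0^t\|E^\eps\|_{L^\infty}\,d\tau$, Lemma \ref{fieldbds} gives $\|E^\eps\|_{L^\infty}\les\ln(e+\|\phi^\eps\|_{H^s})$, and a nonlinear Gronwall argument closes. Your device $Z=\ln(e+Y)+Q$ would be an acceptable substitute for the paper's Osgood step once $\beta\le 1$ is known.

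\textbf{The gap.} There is a genuine gap exactly where you locate it: you never prove an $H^s$ estimate whose dependence on $Q^\eps$ lets the argument close, and that estimate is the heart of the proof. The bound you actually derive, $(1+Q^\eps)^3\|\phi^\eps\|_{H^s}$, fails Osgood, as you note. Neither suggested repair works as stated.
\begin{itemize}
\item Lemma \ref{potest}(iii) is linear in $\|\rho\|_{L^\infty}\les Q^3$; the logarithm is in $R/r$, not in $\|\rho\|_{L^\infty}$. It also requires $\|\nabla\rho^\eps\|_{L^p}$ with $p>3$, which is not bounded uniformly in $\eps$ when $f_0$ is only in $L^\infty$.
\item In your second route, $|\nabla|^sV*$ is a Riesz potential of order $2-s$. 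It maps $L^r\to L^p$ with $\frac1r=\frac1p+\frac{2-s}{3}$, not $L^p\to L^p$. At $p=3$ this gives $\|\rho\|_{L^{3/(3-s)}}\les Q^{(5s-6)/3}$ after interpolating with $L^{5/3}$. Since $(5s-6)/3>1$ for $s>9/5$, the multiplicative $\beta\le 1$ your closing argument needs is not obtained.
\end{itemize}

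\textbf{The missing idea.} Choose the pairing in the product estimate so that all $s$ derivatives fall on $W=V^\eps*\rho^\eps$ in $L^2$, and $\phi^\eps$ sits in $L^\infty$, which costs only $\|\phi^\eps\|_{H^s}$ since $s>3/2$:
\[
\|W\phi\|_{\dot H^s}\les\|W\|_{L^\infty}\|\phi\|_{\dot H^s}+\||\nabla|^sW\|_{L^2}\|\phi\|_{L^\infty},\qquad \||\nabla|^sV*\rho\|_{L^2}\les\|\rho\|_{L^{6/(7-2s)}}.
\]
The exponent $\frac{6}{7-2s}$ lies below $2$, just above $5/3$. For $s\le 1.7$, interpolation between $L^1$ and $L^{5/3}$ removes $Q$ entirely. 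For $s\in(1.7,2)$, interpolation between $L^{5/3}$ and $\|\rho^\eps\|_{L^\infty}\les Q^3$ costs only $Q^{(10s-17)/6}$, with exponent below $1/2$. Combined with $\|W\|_{L^\infty}\les\|\rho^\eps\|_{L^1\cap L^{5/3}}\les C_0$, this gives
\[
\|\phi^\eps(t)\|_{H^s}\les 1+\int_0^t(1+Q^\eps)\|\phi^\eps\|_{H^s}\,d\tau .
\]
Your Step 2 then turns this into $Y\les 1+C_T\int_0^tY\ln(e+Y)\,d\tau$, which Osgood closes.

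\textbf{An alternative fix.} You can instead make the $Q$-dependence additive. In every term where derivatives fall on $W$, pair $\phi^\eps$ with norms controlled by the conserved $H^1$ bound, e.g.\ $\|\phi\|_{L^6}$ or $\|\nabla\phi\|_{L^2}$, as in Proposition \ref{Hsbd}. Then any fixed power of $Q\les_T\ln(e+Y)$ is harmless and linear Gronwall suffices. The three-term inequality you wrote would already have this structure if you took $q\le 6$ and used $\|\phi\|_{H^{s-1}}\le\|\phi\|_{H^1}$. The loss came from bounding every $\phi$-factor by $\|\phi\|_{H^s}$.
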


\begin{proof}

We start with the mild form of the Hartree equation. By the $H^s$ unitarity of the Schr\"odinger semigroup, the $L^p$ boundedness of mollifiers, the Kato-Ponce product estimate, Sobolev embedding, and Lemmas \ref{HLS} and \ref{fieldbds}, we get
\begin{align*}
    \|\phi^\eps(t)\|_{\dot{H}^s} &\leq \left\| e^{it\Delta/2}\phi^\eps_0 - i\int_0^t e^{i(t-s)\Delta/2}[(V^\eps*\rho^\eps)\phi^\eps](s)ds\right\|_{\dot{H}^s}\\
    &\les \|\phi_0\|_{\dot{H}^s} + \int_0^t \|V*\rho(\tau)\|_{L^\infty}\|\phi^\eps(\tau)\|_{\dot{H}^s} + \||\nabla|^s V*\rho^\eps(\tau)\|_{L^2}\|\phi^\eps(\tau)\|_{L^\infty} d\tau\\
    &\les \|\phi_0\|_{\dot{H}^s} + \int_0^t \|\rho^\eps(\tau)\|_{L^1 \cap L^{5/3}}\|\phi^\eps(\tau)\|_{\dot{H}^s} + \|\rho^\eps(\tau)\|_{L^{\frac{6}{7-2s}}}\|\phi^\eps(\tau)\|_{H^s} d\tau.
\end{align*}

Since $\|\phi^\eps(t)\|_{L^2}\leq \|\phi_0\|_{L^2}$, and $\phi_0\in H^s$, we obtain 
$$\|\phi^\eps(t)\|_{H^s} \les 1 + \int_0^t \big(\|\rho^\eps(\tau)\|_{L^1\cap L^{5/3}} + \|\rho^\eps(\tau)\|_{L^{\frac{6}{7-2s}}}\big)\|\phi^\eps(\tau)\|_{H^s} d\tau.$$

When $s$ is such that $6/(7-2s)\leq 5/3$, i.e. $s\leq 1.7$, interpolating $L^{\frac{6}{7-2s}}$ between $L^{5/3}$ and $L^1$ plus the uniform bounds on $\|\rho^\eps(\tau)\|_{L^1 \cap L^{5/3}}$ and Gronwall gives
$$\|\phi^\eps(t)\|_{H^s} \les\exp(Ct).$$

However, in the case where $s\in (1.7,2)$, we instead interpolate $L^{\frac{6}{7-2s}}$ between $L^\infty$ and $L^{5/3}$, using the trivial bound $\|\rho^\eps(t)\|_{L^\infty}\les \|f_0\|_{L^\infty_{x,v}} Q^\eps(t)^3$, to obtain
\begin{align}\label{phiepsbd}
    \|\phi^\eps(t)\|_{H^s} &\les 1 + \int_0^t (\|\rho^\eps(\tau)\|_{L^1 \cap L^{5/3}} + \|\rho^\eps(\tau)\|_{L^{5/3}}^{\frac{35-10s}{18}}\|\rho^\eps(\tau)\|_{L^\infty}^{\frac{10s-17}{18}}  )\|\phi^\eps(\tau)\|_{H^s} d\tau \notag\\
    &\les 1 + \int_0^t \big(1 + Q^\eps(\tau)^{\frac{10s-17}{6}}  \big)\|\phi^\eps(\tau)\|_{H^s} d\tau \notag\\
    &\les 1 + \int_0^t \big(1 + Q^\eps(\tau) \big)\|\phi^\eps(\tau)\|_{H^s} d\tau.
\end{align}

In the last line we used the fact that $(10s-17)/6 < 1/2$ when $s<2$.\\

Next we bound $Q^\eps(t)$. Integrating the characteristic ODE for $V^\eps$, we see that
\begin{align*}
        |v|-|V^\eps(s,t,x,v)|&\leq|V^\eps(s,t,x,v)-v|\leq \|E^\eps\|_{L^1([0,T];L^\infty)}.
\end{align*}
Therefore, $|v|>2k + |\int_0^t E^\eps(s,X^\eps(s))ds|$ implies $|V^\eps(0,t,x,v)|>2k$, so the velocity support $Q^\eps(t)$ satisfies (recalling that $f_0^\eps$ is supported in $|v|\leq 2k$)  
\begin{equation}\label{Qepsbd}
    Q^\eps(t) \leq 2k + \Big|\int_0^t E^\eps(s,X^\eps(s,t,x,v))  ds\Big| \leq 2k + t\|E^\eps\|_{L^\infty([0,t];L^\infty)}.
\end{equation}

Now we bound $E^\eps$. Lemma \ref{fieldbds} followed by Sobolev embedding and the uniform bounds on $\|\phi^\eps\|_{H^1}$ give

$$\|E^\eps\|_{L^\infty} \les \|\phi^\eps\|_{H^1}^2 \ln \Big( e+\frac{\|\phi^\eps\|_{H^s}^2}{\|\phi^\eps\|_{H^1}^2}\Big) \les \ln\big( e+\|\phi^\eps\|_{H^s}\big).$$

Therefore, (\ref{phiepsbd}) and (\ref{Qepsbd}) yield
$$\|\phi^\eps(t)\|_{H^s}\les 1 + \int_0^t \|\phi^\eps(\tau)\|_{H^s} \ln\big( e + 2k+C\tau\sup_{\sigma\in[0,\tau]}\|\phi^\eps(\sigma)\|_{H^s}\big) d\tau.$$
Setting $y(t) = \sup_{\sigma\in[0,\tau]}\|\phi^\eps(\sigma)\|_{H^s}$, we get

$$y(t) \les 1+ C_T\int_0^t y(\tau)\ln(e+y(\tau))d\tau.$$
Therefore, a standard Gronwall-type argument shows that there exists $C_T$ such that 

$$\sup_{t\in[0,T]}\|\phi^\eps(t)\|_{H^s}\leq C_T.$$
Then, we obtain the desired bound on $Q^\eps(t)$ as a consequence of the bound on $E^\eps$.

\end{proof}

As a corollary, we obtain global existence for each $\eps$ with uniform in $\eps$ bounds on the Lipschitz norm of $E^\eps$ and on the support of $f^\eps$. 

\begin{prop}
    For any $\eps$ and $T<\infty$ a solution $(f^\eps,\phi^\eps)$ to the regularized Vlasov-Hartree system exists on $[0,T]$. Moreover,
$$\sup_{t\in[0,T]}\big(\|E^\eps(t)\|_{W^{1,\infty}} + \diam(\supp f^\eps(t))\big) \leq C_T,$$
where $\diam(\supp f^\eps) \subset \R^6$ is the diameter of the support of $f^\eps$.
\end{prop}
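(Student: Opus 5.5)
The plan is a standard continuation argument built on Proposition \ref{unifphibds}, followed by reading off the Lipschitz and support bounds from Lemma \ref{fieldbds} and the characteristic equations.

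\textbf{Continuation.} Fix $\eps$ and $T<\infty$, and let $T^*_\eps$ be the maximal existence time of the solution in $X_{T}$ produced by the local contraction argument. For fixed $\eps$, the local existence time $T_\eps$ depends only on a bound for the size of the data in $L^1_{x,v}\cap L^\infty_{x,v}\times H^s$. Since $V^\eps$ is smooth, the nonlinearities are Lipschitz in these norms with constants depending on $\eps$ and the data size.

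Suppose $T^*_\eps\le T$. By Lemma \ref{prelimunifbds}, $\|f^\eps(t)\|_{L^1\cap L^\infty}$ stays bounded; indeed it is conserved by transport along the divergence-free characteristic flow. By Proposition \ref{unifphibds}, $\|\phi^\eps(t)\|_{H^s}\le C_T$ on $[0,T^*_\eps)$. Restarting the local theory at time $T^*_\eps-\delta$, with $\delta$ smaller than the uniform local time associated to these bounds, extends the solution past $T^*_\eps$. This contradicts maximality, so the solution exists on $[0,T]$.

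\textbf{Field bound.} Lemma \ref{fieldbds}, applied with $V^\eps$ in place of $V$, gives $\|E^\eps\|_{W^{1,\infty}}\les\|\phi^\eps\|_{H^s}^2\le C_T^2$. The underlying estimates of Lemma \ref{potest} pass to the mollified potential because $\chi^\eps*$ is an $L^p$ contraction commuting with derivatives: $\nabla^kV^\eps*u=\nabla^kV*(\chi^\eps*u)$, and $\|\chi^\eps*|\phi^\eps|^2\|$ is controlled by the same norms of $|\phi^\eps|^2$.

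\textbf{Support bound.} The velocity support is at most $Q^\eps(t)\le C_T$ by Proposition \ref{unifphibds}. For the spatial support, I use $f^\eps(t)=f^\eps_0\circ(\Phi^\eps_t)^{-1}$. A point $(x,v)\in\supp f^\eps(t)$ equals $(X^\eps(t,0,y,w),V^\eps(t,0,y,w))$ with $|y|,|w|\le 2k$. Integrating $\dot X=V$ gives
$$|X^\eps(t)|\le 2k+\int_0^t|V^\eps(\tau)|\,d\tau\le 2k+T\sup_{\tau\le T}Q^\eps(\tau)\le 2k+TC_T.$$
Hence $\diam(\supp f^\eps(t))\le C_T$, uniformly in $\eps\le k$.

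\textbf{Main obstacle.} The only delicate point is ensuring that the local existence time depends only on the norms controlled by Proposition \ref{unifphibds}, with no hidden $\eps$-uniformity needed. Since $\eps$ is fixed during continuation, $\eps$-dependence of the local time is harmless, so I expect this step to be routine.
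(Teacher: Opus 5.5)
Your proof is correct and follows essentially the same route as the paper. It uses continuation via the uniform $H^s$ bound of Proposition~\ref{unifphibds}, the estimate $\|E^\eps\|_{W^{1,\infty}}\les\|\phi^\eps\|_{H^s}^2$ from Lemma~\ref{fieldbds}, and integration of the characteristic equations for the support; the only cosmetic difference is that you push the initial support $\{|x|,|v|\le 2k\}$ forward along the flow, whereas the paper shows via backward characteristics that points with $|x|$ too large satisfy $|X^\eps(0,t,x,v)|>2k$, which is the same estimate in contrapositive form.
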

\begin{proof}
    
If there were a maximal existence time $T_*<\infty$, by ODE theory we must either have $\lim_{t\to T_*^-}\|\phi^\eps(t)\|_{H^s} = \infty$ or the characteristics exist for only a finite time, but the first case is ruled out by Proposition \ref{unifphibds}, and the second is ruled out by the estimate
$$\|E^\eps\|_{L^\infty([0,T_*];W^{1,\infty})}\les \|\phi^\eps\|_{L^\infty([0,T_*];H^s)}^2 \les C_{T_*}$$
since by the Cauchy-Lipschitz theorem the Lagrangian flow $(X^\eps,V^\eps)$ must exist globally if $E^\eps$ is Lipschitz.\\

To prove the claim regarding the support of $f^\eps$, we need only show that the $x$ support is bounded, which follows from a similar argument used to bound the $v$ support. Integrating the characteristic equations gives
\begin{align*}
        |x|-|X^\eps(s,t,x,v)|&\leq |X^\eps(s,t,x,v)-x|\leq \int_s^t |V^\eps(\tau,t,x,v)|d\tau \leq T|v|+T\|E^\eps\|_{L^1([0,T];L^\infty)}.
    \end{align*}
    In particular, if $|x|> 2k+T|v|+T\|E^\eps\|_{L^\infty([0,T];L^\infty)}$ then $|X^\eps(0,t,x,v)|>2k$, and therefore 
    \begin{align*}
        \supp f^\eps(t) &\subset \big\{(x,v) : |x|\leq 2k+T|v|+T\|E^\eps\|_{L^1([0,T];L^\infty)}, |v|\leq 2k+\|E^\eps\|_{L^1([0,T];L^\infty)}\big\}\\
        &\subset \{(x,v) : |x|\leq 2k(1+T) + 2TC_T, |v|\leq 2k+TC_T\}.
    \end{align*} 
\end{proof}
    
\textbf{c. Weak compactness.} As a result of the above bounds, we obtain the existence of $f\in L^\infty([0,T];L^p_{x,v})$ such that, up to a subsequence,
$$f^\eps \wkly f \quad \text{weak*} \text{ in } L^\infty([0,T];L^p_{x,v}) \text{ as } \eps\to 0 \quad \forall 1\leq p\leq \infty.$$

Similarly, there exists $\phi\in C([0,T];H^s)$ such that, again up to a subsequence,
$$\phi^\eps \wkly \phi \quad \text{weak*} \text{ in } C([0,T];H^s) \text{ as } \eps\to 0.$$

By the lower semicontinuity of weak convergence, the limits $f,\phi$ enjoy the same bounds as $f^\eps,\phi^\eps$:
\begin{gather*}
    \|f\|_{L^\infty([0,T];L^p_{x,v})}\leq \|f_0\|_{L^p_{x,v}} \quad \forall 1\leq p \leq\infty;\\
    \|\phi\|_{C([0,T];H^s)}\leq C_T.
\end{gather*}

\subsection{Strong limits}

In order to prove strong convergence of the characteristic flow and that the limiting solution conserves energy, we need strong convergence \emph{globally in space}. Strong convergence on compact sets via compact Sobolev embeddings is therefore not enough. In this section we prove the following proposition:

\begin{prop}\label{strlim}
    For any $T<\infty$, as $\eps\to 0$,
    \begin{align*}
        \rho^\eps \to \rho &\quad \text{strongly in } L^1([0,T]\times \R^3); \\
        V*\rho^\eps \to V*\rho &\quad \text{strongly in } L^1([0,T]; L^\infty); \\
        \phi^\eps \to \phi &\quad \text{strongly in } L^\infty([0,T];H^{s'}) \quad \forall s'<s;\\
        E^\eps \to E &\quad \text{strongly in } L^\infty([0,T];W^{1,\infty});\\
        (X^\eps,V^\eps)\to (X,V) &\quad \text{strongly in } L^\infty([0,T];W_{x,v}^{1,\infty}).
    \end{align*}
    Where, as usual, the convergence holds up to a subsequence, and $\rho,E,X,V$ are defined in the usual way from the limits $f,\phi$ constructed in the previous section.
\end{prop}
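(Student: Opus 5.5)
The plan is to prove the five convergences in order. Each one feeds the next. The two inputs are the uniform bounds from Proposition \ref{unifphibds} and the support bound in the subsequent proposition: $f^\eps$ is bounded in $L^\infty_{t}(L^1\cap L^\infty)$ and supported in a fixed compact set $K_T\subset\R^6$, $\phi^\eps$ is bounded in $C([0,T];H^s)$, and $E^\eps$ is bounded in $W^{1,\infty}$.

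\textbf{Step 1: $\rho^\eps\to\rho$ in $L^1$.} Since $f^\eps$ solves a transport equation with bounded right-hand side, we have
$$\p_t f^\eps + v\cdot\nabla_x f^\eps = -\nabla_v\cdot(E^\eps f^\eps),$$
where $E^\eps f^\eps$ is bounded in $L^2_{t,x,v}$ with support in $K_T$. The velocity averaging lemma (as in DiPerna--Lions) then gives that $\rho^\eps=\int f^\eps\,dv$ is bounded in $H^{1/4}_{t,x}$ locally; one could instead use $\int f^\eps\psi(v)\,dv$, where $\psi$ is a cutoff equal to $1$ on the $v$-support. Because all supports lie in a fixed compact set, Rellich gives strong compactness in $L^2([0,T]\times\R^3)$, and hence in $L^1$. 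The limit must be $\rho$ because $\rho^\eps\wkly\rho$ weakly.

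\textbf{Step 2: $V*\rho^\eps\to V*\rho$ in $L^1_tL^\infty_x$.} We also need to handle the mollifier, that is, $V^\eps*\rho^\eps$ versus $V*\rho^\eps$. Apply Lemma \ref{potest}(i) with $p=1$ and $q=\infty$ to the difference:
$$\|V*(\rho^\eps-\rho)\|_{L^\infty}\les \|\rho^\eps-\rho\|_{L^1}^{1/3}\|\rho^\eps-\rho\|_{L^\infty}^{2/3}.$$
The $L^\infty$ norm is bounded uniformly by $Q^\eps(t)^3\le C_T$. Integrating in time and using H\"older, the right-hand side tends to zero by Step 1. The mollifier error $\|(V^\eps-V)*\rho^\eps\|_{L^\infty}$ is $O(\eps)$-small because $\rho^\eps$ is uniformly bounded in $L^1\cap L^\infty$ and $V^\eps-V$ is small in $L^1_{loc}$ with tails controlled.

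\textbf{Step 3: $\phi^\eps\to\phi$ in $L^\infty_tH^{s'}$.} Write the difference of the mild formulations for $\phi^\eps$ and $\phi^{\eps'}$, or against the limit once it is known to be mild, and estimate in $L^2$:
$$\|\phi^\eps(t)-\phi^{\eps'}(t)\|_{L^2}\le \|\phi^\eps_0-\phi^{\eps'}_0\|_{L^2}+\int_0^t\|V^\eps*\rho^\eps-V^{\eps'}*\rho^{\eps'}\|_{L^\infty}\|\phi^\eps\|_{L^2}+\|V*\rho^{\eps'}\|_{L^\infty}\|\phi^\eps-\phi^{\eps'}\|_{L^2}\,d\tau.$$
Gronwall together with Step 2 shows the sequence is Cauchy in $C([0,T];L^2)$. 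Interpolating with the uniform $H^s$ bound then gives convergence in $L^\infty_tH^{s'}$ for every $s'<s$. This is a genuinely global-in-space statement, which is exactly why Step 2 was needed.

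\textbf{Step 4: $E^\eps\to E$ in $W^{1,\infty}$.} Take $s'\in(3/2,s)$ and rerun the proof of Lemma \ref{fieldbds} on differences. The quantity $|\phi^\eps|^2-|\phi|^2$ is controlled in the relevant norms by $\|\phi^\eps-\phi\|_{H^{s'}}(\|\phi^\eps\|_{H^{s'}}+\|\phi\|_{H^{s'}})$. Lemma \ref{potest}(i),(iii) then bounds $\|E^\eps-E\|_{W^{1,\infty}}$ by $\|\phi^\eps-\phi\|_{H^{s'}}$, plus a mollification error that vanishes since $\nabla V^\eps=\chi^\eps*\nabla V$ and $E$ is uniformly continuous.

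\textbf{Step 5: convergence of the flows.} Since $E^\eps\to E$ in $L^\infty W^{1,\infty}$ with a uniform Lipschitz bound, a Gronwall argument on the characteristic ODEs gives $(X^\eps,V^\eps)\to(X,V)$ uniformly. For the derivatives, differentiate the ODEs in $(x,v)$ to get the linearized system
$$\tfrac{d}{ds}D V^\eps = \nabla E^\eps(X^\eps)\,DX^\eps.$$
Convergence of the derivative flows in $L^\infty$ follows from the convergence $\nabla E^\eps\to\nabla E$ in $L^\infty$, combined with the uniform continuity of $\nabla E$; this continuity is H\"older, since $\phi\in H^s$ gives $\nabla E\in C^{\alpha}$ by a variant of Lemma \ref{potest}(iii).

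I expect Step 1 to be the main obstacle, since it is the only genuinely new compactness input. Some care is needed to apply the averaging lemma with the force term in divergence form and only $L^2$ control on $E^\eps f^\eps$. The fixed compact support of $f^\eps$ is what upgrades local compactness to global $L^1$ convergence. A minor point is that $\rho$ must be identified with $\int f\,dv$, which follows from weak convergence together with the compact $v$-support.
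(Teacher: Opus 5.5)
Your proposal is correct and follows the paper's proof essentially step for step: the averaging lemma with the uniform support bounds for $\rho^\eps$, Lemma~\ref{potest}(i) for $V*\rho^\eps$, an $L^2$ Cauchy/Gronwall argument from the mild formulation plus interpolation for $\phi^\eps$, $H^{s'}$ difference bounds plus mollifier errors for $E^\eps$, and Gronwall on the linearized characteristics. Two small remarks: in Step 2 the exponents are swapped, since Lemma~\ref{potest}(i) with $\alpha=1$, $p=1$, $q=\infty$ gives $\|\rho^\eps-\rho\|_{L^1}^{2/3}\|\rho^\eps-\rho\|_{L^\infty}^{1/3}$ (harmless for the conclusion), and your appeal to the H\"older continuity of $\nabla E$ in Step 5 is genuinely needed, because it controls the term $\nabla E(X^\eps)-\nabla E(X)$ that the paper's displayed derivative estimate omits.
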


\begin{proof}

\textbf{a. Convergence of $\rho^\eps$ and $V*\rho^\eps$.} First we prove the first two claims. We recall the following averaging lemma, which is a straightforward consequence of Theorem 1.8 in \cite{bouchut_kinetic_2000}:

\begin{lem}
    Suppose $f^n(t,x,v)$ is a bounded sequence in $L^p_{loc}([0,\infty)\times\R^6)$ for some $p>1$ solving $$\p_t f^n + v\cdot\nabla_x f^n = \nabla_v \cdot g^n$$ in the sense of distributions, with $g^n_j$ locally bounded in the space of measures $\mathcal{M}_{loc}([0,\infty) \times \R^6)$. Then for any smooth compactly supported $\psi(v)$, the sequence 
    $$\rho^n_{\psi}(t,x) := \int f^n(t,x,v)\psi(v)dv$$
    is compact in $L^q_{loc}([0,\infty)\times\R^3)$ for any $q<p$.
\end{lem}

To make use of this lemma, we fix a smooth function $\psi_R(v)$ is supported in $|v|\leq R$. Then since $f^\eps$ is bounded in $L^\infty_{loc}([0,\infty)\times\R^6)$, and $g^\eps_j = -E_j^\eps f^\eps$ is bounded in $L^1_{loc}([0,\infty)\times\R^6)$ in view of $\|E^\eps f^\eps\|_{L^1_{x,v}} \leq \|E^\eps\|_{L^\infty}\|f^\eps\|_{L^1_{x,v}} \les_T 1$, the Lemma implies that, up to a subsequence, we have $\rho^\eps_{\psi_R} \to \rho_{\psi_R}$ strongly in $L^1_{loc}([0,\infty)\times\R^3)$. Then
\begin{align*}
    \|\rho^\eps-\rho\|_{L^1([0,T]\times\R^3)} &\leq \int_0^T \int_{|x|\leq R}|\rho^\eps_{\psi_R}-\rho_{\psi_R}| dxdt + \int_0^T \int_{|x|\leq R}\int_{|v|>R} |f^\eps-f| dvdxdt\\
    &\quad +\int_0^T \int_{|x|>R}\int |f^\eps-f|dvdxdt := I_1 + I_2 + I_3.
\end{align*}

The first term $I_1$ converges to zero as $\eps\to 0$ in view of the preceding discussion. The second term is bounded by 
\begin{align*}
    I_2 \leq TR^{-2}\big(\|K^\eps_f(t)\|_{L^\infty([0,T])}+\|K_f(t)\|_{L^\infty([0,T])}\big),
\end{align*}
which can be made small if $R$ is chosen large enough in view of the boundedness of the energy. Finally, for $I_3$,
\begin{align*}
    I_3 \leq TR^{-2}\big( \||x|^2 f^\eps\|_{L^\infty([0,T];L^1_{x,v})}+\||x|^2 f\|_{L^\infty([0,T];L^1_{x,v})}\big).
\end{align*}
Similarly, by choosing $R$ large and using the boundedness of spatial moments (which can easily be seen by appealing to the support bounds on $f^\eps$ and $f$), $I_3$ can be made small.\\

As a consequence, we obtain $V*\rho^\eps \to V*\rho$ strongly in $L^1([0,T];L^\infty)$, since by Lemma \ref{potest} and the uniform bound $|\rho^\eps(t)| \les Q^\eps(t)^3 \les_T 1$, 
\begin{equation*}
    \|V*(\rho^\eps-\rho)\|_{L^1([0,T];L^\infty)} \les \|\rho^\eps-\rho\|^{2/3}_{L^1([0,T];L^1)}\|\rho^\eps-\rho\|^{1/3}_{L^1([0,T];L^\infty)} \to 0. 
\end{equation*}

\textbf{b. Convergence of $\phi^\eps$.} Next, we show that $\phi^\eps$ is Cauchy in $L^2$, which will enable us to prove the rest of the proposition. Assume that $\eta\leq \eps$. We have
\begin{align*}
    \|\phi^\eps(t) - \phi^\eta(t)\|_{L^2} &\leq \underbrace{\|(\chi^\eps-\chi^\eta)*\phi_0\|_{L^2}}_{I_0} + \underbrace{\int_0^t \big\|((V^\eps-V^\eta)*\rho^\eps)\phi^\eps\big\|_{L^2}ds}_{I_1}\\
    &+ \underbrace{\int_0^t \| (V^\eta*\rho^\eps)(\phi^\eps-\phi^\eta)\|_{L^2}ds }_{I_2} + \underbrace{\int_0^t\|V^\eta*(\rho^\eps-\rho^\eta)\phi^\eta\|_{L^2} ds}_{I_3}.
\end{align*}
For $I_0$, we use the property of mollifiers that $\|(\chi^\eps-\chi^\eta)*u\|_{L^p}\les \eps\|\nabla u\|_{L^p}$ and their $L^p$ boundedness, combined with the uniform bounds on $\phi^\eps$ in $H^1$, to obtain

$$I_0 \les \eps\|\phi_0\|_{\dot{H}^1} \les \eps.$$

For $I_1$, we use the same mollifier estimate, combined with Sobolev embedding and Lemmas \ref{HLS}, interpolation, and the uniform bounds on $\|\rho^\eps\|_{L^1\cap L^{5/3}}$ and $\|\phi^\eps\|_{H^1}$: 
\begin{align*}
    I_1 \les \eps\int_0^t \|\nabla V*\rho^\eps\|_{L^{3}}\|\phi^\eps\|_{L^{6}} ds \les \eps\int_0^t \|\rho^\eps\|_{L^{3/2}} \|\phi^\eps\|_{H^1} ds \les \eps T.
\end{align*}

For $I_2$ and $I_3$, we use the $L^p$ boundedness of mollifiers, Lemma \ref{fieldbds}, and the uniform bounds:
\begin{align*}
    I_2 &\les \int_0^t \|V*\rho^\eps\|_{L^\infty}\|\phi^\eps-\phi^\eta\|_{L^2}ds \les \int_0^t \|\rho^\eps\|_{L^1\cap L^{5/3}}\|\phi^\eps-\phi^\eta\|_{L^2}ds \les \int_0^t \|\phi^\eps-\phi^\eta\|_{L^2}ds, \\
    I_3 &\les \int_0^t \|V*(\rho^\eps-\rho^\eta)\|_{L^\infty} \|\phi^\eta\|_{L^2} ds \les \|V*(\rho^\eps-\rho^\eta)\|_{L^1([0,T];L^\infty)}.
\end{align*}

Therefore, Gronwall gives 

$$\|\phi^\eps - \phi^\eta\|_{L^\infty([0,T];L^2)} \les \big(\eps(1+T) + \|V*(\rho^\eps-\rho^\eta)\|_{L^1([0,T];L^\infty)}\big)\exp(CT).$$

The strong convergence of $V*\rho^\eps$ in $L^1([0,T];L^\infty)$ yields the strong convergence of $\phi^\eps$ to $\phi$ in $L^\infty([0,T];L^2)$ by uniqueness of weak limits. By interpolation and the uniform bounds on $\phi^\eps$ in $H^s$, this implies that $\phi^\eps$ converges strongly to $\phi$ in $L^\infty([0,T];H^{s'})$ for any $s'<s$.\\

\textbf{c. Convergence of $E^\eps$.} Let $\eta\leq \eps$. Using the mollifier estimate used previously combined with Lemmas \ref{potest} and \ref{fieldbds}, Sobolev embedding, and the uniform bounds gives
\begin{align*}
    \|E^\eps-E^\eta\|_{L^\infty} &\leq \big\|\nabla (V^\eps-V^\eta)*|\phi^\eps|^2\big\|_{L^\infty} + \big\|\nabla V^\eta * \big(|\phi^\eps|^2 - |\phi^\eta|^2\big)\big\|_{L^\infty}\\
    &\les \eps \big\|\nabla^2 V *|\phi^\eps|^2\big\|_{L^\infty} + \big\||\phi^\eps|^2 - |\phi^\eta|^2\big\|_{L^1}^{1/3}\big\||\phi^\eps|^2 - |\phi^\eta|^2\big\|_{L^\infty}^{2/3}\\
    &\les \eps \|\phi^\eps\|_{H^s}^2 + \|\phi^\eps-\phi^\eta\|_{L^2}^{1/3}\|\phi^\eps+\phi^\eta\|_{L^2}^{1/3} \big( \|\phi^\eps\|_{H^s}^2+\|\phi^\eta\|_{H^s}^2 \big)^{2/3}\\
    &\les \eps + \|\phi^\eps-\phi^\eta\|_{L^2}^{1/3}.
\end{align*}

This converges to zero as $\eps\to 0$ in view of the previous estimates. To handle $\nabla E$, we use a similar approach, except we need to use small amount of of room between the Sobolev embedding threshold $3/2$ and $s$, and the mollifier estimate $\|(\chi^\eps-\chi^\eta)*u\|_{L^\infty}\les \eps^\alpha [u]_{C^\alpha}$. To this end, let $\delta=\frac12 (s-\frac32)>0$ and $s'=\frac32+\delta$, so that $\frac32 < s' < s$. Then we use elliptic regularity, Lemma \ref{fieldbds} with $s$ replaced by $s'$, the Sobolev embeddings $H^s\hookrightarrow C^\delta$ and $H^{s'}\hookrightarrow L^\infty$, and the uniform bounds to obtain
\begin{align*}
    \|\nabla_x(E^\eps-E^\eta)\|_{L^\infty} &\leq \big\|\nabla^2 (V^\eps-V^\eta)*|\phi^\eps|^2\big\|_{L^\infty} + \big\|\nabla^2 V^\eta * \big(|\phi^\eps|^2 - |\phi^\eta|^2\big)\big\|_{L^\infty}\\
    &\les \eps^\delta \big[\nabla^2 V * |\phi^\eps|^2\big]_{C^\delta} + \big\||\phi^\eps|^2 - |\phi^\eta|^2 \big\|_{H^{s'}}\\
    &\les \eps^\delta [\phi^\eps]^2_{C^\delta} + \|\phi^\eps-\phi^\eta\|_{H^{s'}}\|\phi^\eps+\phi^\eta\|_{H^{s}}\\
    &\les \eps^\delta \|\phi^\eps\|_{H^s} + \|\phi^\eps-\phi^\eta\|_{H^{s'}}.
\end{align*}
This converges to zero in view of the uniform bounds on $\phi^\eps$ in $H^s$ and the convergence of $\phi^\eps$ in $H^{s'}$.\\

\textbf{d. Convergence of the flow.} In what follows, we abbreviate $(X,V)(s):= (X,V)(s,t,x,v)$. Integrating the characteristic equations yields
\begin{align*}
    |X^\eps(s)-X(s)| &\leq \int_s^t |V^\eps(\tau)-V(\tau)|d\tau,\\
    |V^\eps(s)-V(s)|&\leq \int_s^t \|\nabla_xE^\eps(\tau)\|_{L^\infty}|X^\eps(\tau)-X(\tau)| + \|E^\eps(\tau)-E(\tau)\|_{L^\infty} d\tau.
\end{align*}
Therefore, adding the two inequalities and using Gronwall and the uniform bound on $\|\nabla_xE^\eps\|_{L^\infty}$, we see that 

$$|X^\eps(s)-X(s)|+|V^\eps(s)-V(s)| \les_T \|E^\eps-E\|_{L^1([0,T];L^\infty)},$$
which converges to zero as $\eps\to 0$. To handle derivatives, we observe that, with $\p$ denoting either an $x$ or a $v$ gradient,
\begin{align*}
    |\p X^\eps(s)-\p X(s)| &\leq \int_s^t |\p V^\eps(\tau)-\p V(\tau)|d\tau\\
    |\p V^\eps(s)-\p V(s)|&\leq \int_s^t \|\nabla_x E^\eps(\tau)\|_{L^\infty}|\p X^\eps(\tau)-\p X(\tau)| + \|\nabla_x (E^\eps - E)(\tau)\|_{L^\infty} |\p X(\tau)| d\tau.
\end{align*}

Adding the two inequalities and using Gronwall and the uniform bound on $\|\nabla_xE^\eps\|_{L^\infty}$  gives 
$$|X^\eps(s)-X(s)|+|V^\eps(s)-V(s)| \les_T \|\nabla_x(E^\eps-E)\|_{L^1([0,T];L^\infty)}\|\p X\|_{L^\infty([0,T];L^\infty_{x,v})}.$$
This converges to zero using the convergence of $\nabla_x E^\eps$ and noting that $(x,v)\mapsto (X,V)$ is uniformly Lipschitz on $[0,T]$ since $E$ is uniformly Lipschitz on $[0,T]$.

\end{proof}

\subsection{Passing to the limit} Next we show that the obtained convergence is enough for the limits to solve each equation in an appropriate sense, and for the conservation laws to hold.\\

\textbf{a. $f$ solves Vlasov.} Given the characteristics $(X,V)$ constructed in the previous section, we will show that 
$$f(t,x,v) := f_0(X(0,t,x,v),V(0,t,x,v))$$

is a distributional solution to the Vlasov equation. Given a test function $g(t,x,v)\in C_c^\infty$, let 
$$T^\eps g = \p_t g + v\cdot\nabla_x g + E^\eps\cdot\nabla_v g,$$
$$T g = \p_t g + v\cdot\nabla_x g + E\cdot\nabla_v g.$$

Similarly, let $\Phi_t^\eps, \Phi_t:\R^6 \to \R^6$ be the flow maps, defined by
\begin{gather*}
    \Phi_t^\eps(x,v) = (X^\eps(t,0,x,v),V^\eps(t,0,x,v)),\\
    \Phi_t(x,v) = (X(t,0,x,v),V(t,0,x,v)).
\end{gather*}

Then by changing variables and using the measure preserving property of the flow, we wish to show that for all $g\in C_c^\infty$, as $\eps\to 0$,
$$\int_0^T \iint f_0^\eps(x,v) ((T^\eps g)\circ \Phi_t^\eps)(t,x,v)dxdv \to \int_0^T \iint f_0(x,v) ((Tg)\circ \Phi_t)(t,x,v)dxdv. $$

To this end, observe that since $E^\eps$ and $\Phi_t$ are uniformly Lipschitz on $[0,T]$, we have
\begin{align*}
    \|T^\eps g\|_{W^{1,\infty}_{x,v}} \les \|\p_t g\|_{W^{1,\infty}_{x,v}} + \|V(t,0)\|_{W^{1,\infty}_{x,v}}\|\nabla_x g\|_{W^{1,\infty}_{x,v}} + \|E^\eps\|_{W^{1,\infty}_{x}} \|\nabla_v g\|_{W^{1,\infty}_{x,v}} \les_T 1.
\end{align*}

By Proposition \ref{strlim}, this gives strong convergence:
\begin{align*}
    \|T^\eps g \circ \Phi^\eps_t-Tg \circ \Phi_t\|_{L^\infty_{t,x,v}} \les \|T^\eps g\|_{W^{1,\infty}_{x,v}} \|\Phi^\eps_t - \Phi_t\|_{L^\infty_{t,x,v}} + \|\Phi_t\|_{W^{1,\infty}_{x,v}}\|T^\eps g - Tg\|_{L^\infty_{t,x,v}}.
\end{align*}

\textbf{b. $\phi$ is a mild solution.} Now we show the limit $\phi$ solves the Hartree equation in the mild sense. Let
$$u(t) = e^{it\Delta/2}\phi_0 + \int_0^t e^{i(t-s)\Delta/2}[(V*\rho)\phi](s)ds.$$

Then 
\begin{align*}
    \|\phi(t)-u(t)\|_{L^2} \leq \|\phi(t)-\phi^\eps(t)\|_{L^2} + \|\phi^\eps_0-\phi_0\|_{L^2} + \int_0^t \big\|(V*\rho^\eps)\phi^\eps - (V*\rho)\phi \big\|_{L^2}ds.
\end{align*}

The first two terms converge to zero as $\eps\to 0$ by standard properties of mollifiers. The third term can be estimated as follows:
\begin{align*}
    \int_0^t\big\|(V*\rho^\eps)\phi^\eps - (V*\rho)\phi \big\|_{L^2} ds &\leq \int_0^t \|V*\rho^\eps\|_{L^\infty}\|\phi^\eps-\phi\|_{L^2} + \|V*(\rho^\eps-\rho)\|_{L^\infty}\|\phi\|_{L^2} ds\\
    &\les_T \|\phi^\eps-\phi\|_{L^\infty([0,T];L^2)} + \|V*(\rho^\eps-\rho)\|_{L^1([0,T];L^\infty)}.
\end{align*}

This converges to zero by Proposition \ref{strlim}. Hence, $\phi=u$ and so $\phi$ is a mild solution.\\

\textbf{c. Conservation of energy.} Next we show that the limits $f,\phi$ conserve energy, namely

\begin{lem}\label{limconsenergy}
    The limiting solution $(f,\phi)$ conserves energy: for any $T<\infty$ and almost every $t\in [0,T]$, we have $\mathcal{E}(t) = \mathcal{E}(0)$.
\end{lem}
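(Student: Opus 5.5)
The plan is to pass to the limit $\eps\to0$ in the identity $\mathcal{E}^\eps(t)=\mathcal{E}^\eps(0)$ from Lemma \ref{prelimunifbds}. I will treat each of the three pieces of the energy separately at times $t$ where the convergences of Proposition \ref{strlim} hold pointwise in time. For $\phi$ these convergences hold for every $t$; for $\rho$ they hold for a.e.\ $t$ after extracting a further subsequence, which is where the ``almost every $t$'' enters. At $t=0$ the convergence of the energy is direct: $\phi^\eps_0\to\phi_0$ in $H^1$, and $f_0^\eps\to f_0$ in $L^1$ with supports in a fixed ball, so $K_f^\eps(0)\to K_f(0)$. For $P^\eps(0)$ I will write
\[
P^\eps(0)-P(0)=\int \big((V^\eps-V)*\rho_0^\eps\big)|\phi_0^\eps|^2+\int (V*(\rho^\eps_0-\rho_0))|\phi^\eps_0|^2+\int (V*\rho_0)\big(|\phi_0^\eps|^2-|\phi_0|^2\big).
\]
The first term is $O(\eps)$ by the mollifier estimate together with Lemma \ref{HLS}, exactly as for $I_1$ in Proposition \ref{strlim}. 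The other two terms vanish by Lemma \ref{fieldbds} and $L^1\cap L^\infty$ convergence.

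For $t>0$, the boson kinetic energy converges: $K^\eps_\phi(t)\to K_\phi(t)$ for every $t$, since $\phi^\eps\to\phi$ in $L^\infty([0,T];H^{s'})$ with $s'\ge1$. The potential energy uses the same three-term splitting. The only new ingredient is that $V*\rho^\eps(t)\to V*\rho(t)$ in $L^\infty$ for a.e.\ $t$ along a subsequence, which follows from the $L^1([0,T];L^\infty)$ convergence. Combined with $|\phi^\eps|^2\to|\phi|^2$ in $L^1$, this gives $P^\eps(t)\to P(t)$ for a.e.\ $t$. The term $\int((V^\eps-V)*\rho^\eps)|\phi^\eps|^2$ is again $O(\eps)$ uniformly in $t$.

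The fermion kinetic energy $K_f^\eps(t)$ is the main obstacle, because weak-$*$ convergence of $f^\eps$ only gives lower semicontinuity. Here I will exploit the Lagrangian structure. Since the flow preserves measure,
\[
K_f^\eps(t)=\tfrac12\iint |V^\eps(t,0,x,v)|^2 f_0^\eps(x,v)\,dxdv,
\]
and similarly $K_f(t)=\tfrac12\iint |V(t,0,x,v)|^2 f_0\,dxdv$, using the representation $f(t)=f_0\circ\Phi_t^{-1}$ established in part a. On the common compact support, where $|x|,|v|\le 2k$, we have $V^\eps(t,0,\cdot)\to V(t,0,\cdot)$ uniformly by Proposition \ref{strlim}. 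The functions $V^\eps$ are also uniformly bounded there, thanks to the support bound on $Q^\eps$ (equivalently the bound on $\|E^\eps\|_{L^\infty}$). Together with $f_0^\eps\to f_0$ in $L^1$, this gives $K^\eps_f(t)\to K_f(t)$ for every $t$.

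One point needs checking: the weak-$*$ limit $f$ must coincide with $f_0\circ\Phi_t^{-1}$. This holds because $f^\eps(t)=f^\eps_0\circ(\Phi^\eps_t)^{-1}$ converges strongly in $L^1$ to $f_0\circ\Phi_t^{-1}$, by uniform convergence of the inverse flows and the density of continuous functions in $L^1$, and weak limits are unique. Assembling the three limits gives $\mathcal{E}(t)=\lim\mathcal{E}^\eps(t)=\lim\mathcal{E}^\eps(0)=\mathcal{E}(0)$ for a.e.\ $t\in[0,T]$.
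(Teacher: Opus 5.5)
Your proposal is correct. For $K_\phi$, for $P$ (the same three-term splitting of $P^\eps-P$, with ``almost every $t$'' coming from a subsequence along which $\|V*(\rho^\eps-\rho)(t)\|_{L^\infty}\to 0$), and for the convergence $\mathcal{E}^\eps(0)\to\mathcal{E}(0)$, you follow the paper.

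The genuine difference is the fermion kinetic energy. The paper uses the uniform support bound to write $K_f^\eps(t)$ as the pairing of $f^\eps(t)$ with the bounded, compactly supported weight $|v|^2\mathbf{1}_{\{|x|,|v|\le R\}}$. It then invokes weak $L^2$ convergence $f^{\eps}(t)\wkly f(t)$ along a $t$-dependent subsequence. You instead pull $K_f^\eps(t)$ back to time zero by measure preservation. You then use the uniform convergence of the characteristics from Proposition \ref{strlim} together with $f_0^\eps\to f_0$ in $L^1$ on a fixed ball.

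The paper's route is shorter and needs only the support bound and weak compactness. Yours uses the flow convergence, but in return gives convergence of $K_f^\eps(t)$ at every $t$ along the fixed sequence, with no $t$-dependent extraction. More importantly, it supplies a step the paper leaves implicit. Weak-$*$ convergence in $L^\infty([0,T];L^p_{x,v})$ does not by itself identify the weak limit of $f^\eps(t)$ at a fixed time with $f(t)$. That requires either time-equicontinuity coming from the Vlasov equation, or exactly your argument that $f^\eps(t)=f_0^\eps\circ(\Phi_t^\eps)^{-1}\to f_0\circ\Phi_t^{-1}$ strongly in $L^1$. Your identification holds for a.e.\ $t$, which is all the lemma requires.
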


\begin{proof}
    
The convergence of $K^\eps_\phi(t) \to K_\phi(t)$ as $\eps\to 0$ is a direct consequence of the strong convergence of $\phi^\eps$ in $L^\infty([0,T];H^1)$. In view of the uniform bounds on the support of $f^\eps$, there is $R=R(T)$ such that 
$$K_f^\eps(t) = \iint_{|x|,|v|\leq R} |v|^2 f^\eps(t) \ dxdv.$$
In view of the uniform bounds, for each $t$ there is a subsequence $\eps_{n^t}$ such that $f^{\eps_{n^t}}(t)\to f(t)$ weakly in $L^2$, so $K_f^{\eps_{n^t}}(t)\to K_f(t)$. Finally, to handle $P^\eps(t)$, in view of the uniform bounds we have
\begin{align*}
    |P^\eps(t)-P(t)| &\leq  \|V^\eps*\rho^\eps(t)\|_{L^\infty_x}\big\||\phi^\eps(t)|^2 - |\phi(t)|^2\big\|_{L^1_x} + \|V^\eps*\rho^\eps(t) - V*\rho(t)\|_{L^\infty_x} \|\phi(t)\|_{L^2_x}^2\\
    &\les \|\phi^\eps(t)-\phi(t)\|_{L^2_x} + \|(V^\eps-V)*\rho^\eps(t)\|_{L^\infty_x} + \|V*(\rho^\eps-\rho)(t)\|_{L^\infty_x}.
\end{align*}

The first term converges to zero using the uniform bounds and Proposition \ref{strlim}. The second term converges to zero by using standard properties of mollifiers, Lemma \ref{potest}, and the bound $\|\rho^\eps(t)\|_{L^\infty}\les Q^\eps(t)^3 \les_T 1$ from Proposition \ref{unifphibds}:
$$\|(V^\eps-V)*\rho^\eps(t)\|_{L^\infty_x} \les \eps \|\nabla V*\rho^\eps\|_{L^\infty([0,T];L^\infty)} \les \eps\|\rho^\eps\|_{L^1\cap L^\infty} \les_T \eps.$$

To handle the third term, recall that $\|V*(\rho^\eps-\rho)(t)\|_{L^\infty_x} \to 0$ in $L^1([0,T])$ by Proposition \ref{strlim}, so there exists a subsequence $\eps_n$ such that $\|V*(\rho^{\eps_n}-\rho)(t)\|_{L^\infty_x} \to 0$ for almost every $t\in[0,T]$.\\

Finally, we may conclude the argument by showing that $\mathcal{E}^\eps(0)\to \mathcal{E}(0)$, which follows immediately from the convergence properties of mollifiers in $H^1$ and $L^1\cap L^\infty$. 

\end{proof}

\subsection{Uniqueness}\label{uniqueness}

Given two solutions $(f_1,\phi_1)$ and $(f_2,\phi_2)$ emanating from the same initial data $(f_0,\phi_0)$ with associated quantities $E_i,\rho_i,X_i,V_i, \ i=1,2$ defined in the obvious way, define

$$D_X(t) = \sup_{(x,v)\in \supp f_0} |X_1(t,0,x,v)-X_2(t,0,x,v)|,$$
$$D_V(t) = \sup_{(x,v)\in \supp f_0} |V_1(t,0,x,v)-V_2(t,0,x,v)|,$$
$$D(t) = D_X(t) + D_V(t) + \|\phi_1(t)-\phi_2(t)\|_{H^1}.$$

Since $f_i(t,X_i(t,0,x,v),V_i(t,0,x,v)) = f_0(x,v)$, we are justified in only considering $(x,v)$ in the support of $f_0$. Furthermore, $X_i,V_i$ enjoy the same (time-dependent) bounds that we proved for the support of $f$. From the characteristic equations, we have 
\begin{align*}
    D_X(t) &\leq \int_0^t D_V(s)ds,\\
    D_V(t) &\leq \sup_{(x,v)\in K_t}\int_0^t |E_1(s,X_1(s,0,x,v)) - E_2(s,X_2(s,0,x,v))| ds\\
    &\les \int_0^t \|\nabla E_1(s)\|_{L^\infty} D_X(s) + \|E_1(s)-E_2(s)\|_{L^\infty} ds.
\end{align*}
Adding these two inequalities and using the uniform bound on $\|\nabla E_1\|_{L^\infty}$, we obtain
\begin{equation}\label{chardiffbd}
    D_X(t)+D_V(t) \les_T \int_0^t \|E_1(s)-E_2(s)\|_{L^\infty} + D_X(s)+D_V(s) ds.
\end{equation}

For ease of notation let $w = |\phi_1|^2 - |\phi_2|^2$. Lemma \ref{HLS} followed by Lemma \ref{loginterp} gives
\begin{align*}
    \|E_1-E_2\|_{L^\infty} &\les \|w\|_{L^3} \ln \ip{e+\frac{\|w\|_{L^1} + \|\nabla w\|_{L^3}}{\|w\|_{L^3}}}.
\end{align*}
Observe that by H\"older, Sobolev embedding, and the uniform bounds on $\phi_i$,
\begin{gather*}
    \|w\|_{L^1} \leq \|\phi_1\|_{L^2}^2+\|\phi_2\|_{L^2}^2 \les 1,\\
    \|w\|_{L^3} \leq (\|\phi_1\|_{L^6}+\|\phi_2\|_{L^6})\|\phi_1-\phi_2\|_{L^6} \les \|\phi_1-\phi_2\|_{\dot{H}^1},\\
    \|\nabla w\|_{L^3} \leq \sum_{i=1}^2 \|\phi_i\|_{L^{\frac{6}{2s-3}}}\|\nabla\phi_i\|_{L^{\frac{6}{3-2(s-1)}}} \les \sum_{i=1}^2 \|\phi_i\|_{H^{3-s}}\|\phi_i\|_{H^s} \les C_T.
\end{gather*}
Of course, in the previous line we crucially used $s>3/2$. Therefore, since $r\mapsto r\ln(e+Cr^{-1})$ is increasing,
\begin{equation}\label{Ediffbd}
    \|E_1-E_2\|_{L^\infty} \les \|\phi_1-\phi_2\|_{\dot{H}^1}\ln\Big(e + \frac{C_T}{\|\phi_1-\phi_2\|_{\dot{H}^1}} \Big).
\end{equation}

Next, using the mild formulation we employ familiar estimates (namely, Lemma \ref{fieldbds}, and the uniform bounds on $\|\phi_i\|_{L^2}$ and $\|\rho_i\|_{L^1\cap L^{5/3}}$) to obtain
\begin{align*}
    \|(\phi_1-\phi_2)(t)\|_{L^2} &\les \int_0^t \|V*(\rho_1-\rho_2)\|_{L^\infty}\|\phi_1\|_{L^2} + \|V*\rho_2\|_{L^\infty}\|\phi_1-\phi_2\|_{L^2}ds\\
    &\les \int_0^t \|V*(\rho_1-\rho_2)\|_{L^\infty} + \|\phi_1-\phi_2\|_{L^2} ds.
\end{align*}
Using the same estimates plus Sobolev embedding, the uniform bounds on $\|\phi_i\|_{H^1}$, and Lemma \ref{HLS}, we get
\begin{align*}
    \|(\phi_1-\phi_2)(t)\|_{\dot{H}^1} &\les \int_0^t \|\nabla V*(\rho_1-\rho_2)\|_{L^\infty}\|\phi_1\|_{L^2} + \|\nabla V*\rho_2\|_{L^3}\|\phi_1-\phi_2\|_{L^6} \\
    &\qquad + \|V*(\rho_1-\rho_2)\|_{L^\infty}\|\nabla\phi_1\|_{L^2} + \|V*\rho_2\|_{L^\infty}\|\nabla (\phi_1-\phi_2)\|_{L^2} ds\\
    &\leq \int_0^t \|V*(\rho_1-\rho_2)\|_{W^{1,\infty}} + \|\phi_1-\phi_2\|_{\dot{H}^1} ds.
\end{align*}
As a consequence, combining these two estimates and applying Gronwall gives
\begin{equation}\label{phidiffbd}
    \|(\phi_1-\phi_2)(t)\|_{H^1}\les_T \int_0^t \|V*(\rho_1-\rho_2)(s)\|_{W^{1,\infty}} ds.
\end{equation}

The estimates will close once we obtain Lipschitz or log-Lipschitz bounds on the above in terms of $D(t)$. To this end, we state the following lemma. Most of the ideas of its proof are well-known and are contained in (for instance) \cite{yudovich_non-stationary_1963} for 2D Euler and \cite{loeper_uniqueness_2006}, \cite{miot_uniqueness_2016} for Vlasov-Poisson, but we provide a proof below for completeness.

\begin{lem}\label{potlipbd}
    Let $\rho_1,\rho_2$ be two densities associated to solutions to the Vlasov equation with corresponding characteristic flows $X_i,V_i$. Then 
    \begin{gather*}
        \|V*(\rho_1-\rho_2)(t)\|_{L^\infty_x} \leq C_T D_X(t),\\
        \|\nabla V*(\rho_1-\rho_2)(t)\|_{L^\infty_x} \leq C_T D_X(t)\ln\Big(e+\frac{C_T}{D_X(t)}\Big).
    \end{gather*}
    Here, $C_T$ is a constant depending on the support diameter of $f_i$ up to time $T$.
\end{lem}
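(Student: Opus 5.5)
We represent each density through its flow map and write the difference as a difference of kernel values at the two transported positions. Since $\rho_i(t) = \int f_0\circ\Phi_{i,t}^{-1}\,dv$ and each flow is measure preserving, for any kernel $K$ we have
$$K*(\rho_1-\rho_2)(t,x) = \iint f_0(y,w)\big[K(x-X_1(t,0,y,w)) - K(x-X_2(t,0,y,w))\big]\,dy\,dw .$$
The integration runs over $\supp f_0$, which is compact. The support bounds from the previous section (valid for both solutions, with constants depending on $T$) put both $X_1(t,0,y,w)$ and $X_2(t,0,y,w)$ in a fixed ball $B_{R_T}$. By definition, $|X_1-X_2|\le D_X(t)$ on $\supp f_0$.

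\textbf{Potential bound.} Take $K=V$. Write
$$V(x-X_1)-V(x-X_2)=\int_0^1 \nabla V\big(x-X_\theta\big)\cdot(X_2-X_1)\,d\theta, \qquad X_\theta=\theta X_1+(1-\theta)X_2 .$$
This gives $|V*(\rho_1-\rho_2)(x)|\le D_X(t)\int_0^1\iint f_0\,|x-X_\theta|^{-2}\,dy\,dw\,d\theta$. The key point is that $\theta\mapsto(y,w)\mapsto X_\theta(t,0,y,w)$ pushes $f_0\,dy\,dw$ forward to a density bounded in $L^1\cap L^\infty$ in $x$, uniformly in $\theta$. The interpolated map $(y,w)\mapsto(X_\theta,\theta V_1+(1-\theta)V_2)$ is a perturbation of a measure-preserving map, but its Jacobian is not obviously nondegenerate. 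I would therefore avoid that route. Instead, I would use the Loeper-type trick of working in the $X_1$ variables: change variables via $\Phi_{1,t}$ and view $X_2\circ\Phi_{1,t}^{-1}$ as a map displacing points by at most $D_X$. It is cleaner still to bound $\iint f_0|x-X_\theta|^{-2}$ directly. For each fixed $w$-slice, use the uniform Lipschitz bounds on the flows from the previous section (the flows are bi-Lipschitz with constants $C_T$) to show that the pushforward under $X_\theta$ has density $\les_T$ times that of $f_0$, at least when $D_X$ is small. When $D_X\ge c_T$, the bound is trivial, since each term is bounded by Lemma~\ref{fieldbds} using $\|\rho_i\|_{L^1\cap L^\infty}\les_T 1$. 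Given such a density bound $\bar\rho_\theta\in L^1\cap L^\infty$, $\int \bar\rho_\theta(z)|x-z|^{-2}dz\les \|\bar\rho_\theta\|_{L^1\cap L^\infty}$ by Lemma~\ref{potest}(i). This gives the first estimate.

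\textbf{Field bound.} Take $K=\nabla V$, which requires a Yudovich-style splitting. Fix $x$ and set $d=D_X(t)$. Split the $(y,w)$ integral according to whether $\min_i|x-X_i|\le 2d$ or not.

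\emph{Near region.} Bound each term separately by $\iint_{|x-X_i|\le 3d} f_0|x-X_i|^{-2} = \int_{|x-z|\le 3d}\rho_i(z)|x-z|^{-2}dz\les \|\rho_i\|_{L^\infty}d$, using $\|\rho_i\|_{L^\infty}\les_T1$.

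\emph{Far region.} Here $|x-X_\theta|\ge \tfrac12|x-X_1|$ along the whole segment. The mean value theorem with $|\nabla^2V(z)|\les|z|^{-3}$ gives the bound
$$d\iint_{|x-X_1|>2d} f_0\,|x-X_1|^{-3} = d\int_{2d<|x-z|}\rho_1(z)|x-z|^{-3}dz .$$
Since $\rho_1$ is supported in $B_{R_T}$ and bounded, this is $\les_T d\,(1+\ln(R_T/d))$ for $d$ small, i.e. $\les C_T\,d\ln(e+C_T/d)$. For $d$ large the bound follows trivially from $\|\nabla V*\rho_i\|_{L^\infty}\les_T1$.

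The second estimate needs no interpolation at all: the far-region estimate uses only the $\rho_1$ pushforward, which is measure preserving. In hindsight the first estimate can be handled the same way, since it is even easier. Split into the near region, where each term is $\les \|\rho_i\|_\infty d^2$, and the far region, where the integrand is controlled by $d\int\rho_1|x-z|^{-2}\les_T d$. So the interpolated-density issue disappears in both cases. The main obstacle is the careful comparison $|x-X_\theta|\gtrsim|x-X_1|$ in the far region and the bookkeeping of the support radius $R_T$ in the logarithm. Both are routine once the splitting at scale $2D_X$ is adopted.
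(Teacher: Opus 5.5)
Your final argument is correct and is essentially the paper's proof. It splits at scale $2D_X(t)$, bounds each kernel separately near the singularity, and in the far region uses the mean value theorem with $|x-X_\theta|\ge\frac12|x-X_1|$ followed by the measure-preserving change of variables back to $\rho_1$. The only differences are cosmetic: the paper uses $\big||a|^{-1}-|b|^{-1}\big|\le |a-b|/(|a||b|)$ instead of the mean value theorem for the potential, and it peels off a third region $|x-X_1|\ge 2C_T$ for the field rather than bounding the tail by $\|\rho_1\|_{L^1}$.

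You should delete the abandoned $X_\theta$-pushforward detour. A bounded pushforward density for the fixed-$w$ slices $y\mapsto X_\theta(t,0,y,w)$ does not follow from the flows being bi-Lipschitz on $\R^6$, and it can genuinely fail, since such slices of even a single flow can focus.
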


Granting the lemma for now, let $\omega(r) = r\ln(e+C_T/r)$, then applying the Lemma to \ref{phidiffbd}, combining \ref{Ediffbd} with \ref{chardiffbd}, and adding the resulting two inequalities and using the fact that $\ln(e+\cdot)\geq 1$ yields
\begin{align*}
    D(t) &\les_T \int_0^t \omega\big(\|\phi_1(s)-\phi_2(s)\|_{H^1}\big) + \omega\big( D_X(s)\big) + D_X(s)+D_V(s) ds\\
    &\les \int_0^t \omega\big( D(s)\big) ds.
\end{align*}

Since $\omega$ is an Osgood modulus of continuity, the Osgood lemma gives $D(t)=0$ and hence uniqueness.

\begin{proof}[Proof of Lemma \ref{potlipbd}]
    For the first claim, we change variables and use the fact that the flow is measure-preserving, which yields 
    $$V*(\rho_1-\rho_2)(t,x) = \iint \Big(\frac{1}{|x-X_1(t,0,y,v)|}-\frac{1}{|x-X_2(t,0,y,v)|}\Big) f_0(y,v)dvdy.$$
    Let 
    $$S = \{y,v:|x-X_1(t,0,y,v)| > 2D_X(t)\}.$$
    To motivate this choice, note that on $S$, 
    $$|x-X_2| \geq |x-X_1|-|X_1-X_2|> |x-X_1|-D_X(t) > \frac12|x-X_1|,$$
    where to simplify notation we wrote $X_i = X_i(t,0,y,v)$. This implies 
    \begin{align*}
        \iint_{S} \Big|\frac{1}{|x-X_1|}-\frac{1}{|x-X_2|}\Big|f_0(y,v)dvdy &\leq \iint_{S} \frac{|x-X_1|-|x-X_2|}{|x-X_1||x-X_2|}f_0(y,v)dvdy \\
        &\leq 2\|f_0\|_{L^\infty}D_X(t)\iint_{\supp(f_0)\cap S} \frac{1}{|x-X_1|^2} dvdy.
    \end{align*}
    Undoing the change of variables (again using the measure-preserving property) and recalling the bounds on the support of $f_1$, we get
    \begin{align*}
        \iint_{\supp(f_0)\cap S} \frac{1}{|x-X_1|^2} dvdy &= \iint_{\supp(f_1)\cap |x-y|> 2D_X(t)}\frac{1}{|x-y|^2}dvdy\\ 
        &\les  \iint_{\supp(f_1)}\frac{1}{|x-y|^2}dvdy \les_T 1.
    \end{align*}
    This gives the desired contribution on $S$. On $S^c$, we crudely bound and then undo the change of variables again, resulting in 
    \begin{align*}
        \iint_{S^c} \Big|\frac{1}{|x-X_1|}-\frac{1}{|x-X_2|}\Big|f_0(y,v)dvdy &\leq \|f_0\|_{L^\infty}\sum_{i=1}^2\iint_{\supp(f_0)\cap S^c} \frac{1}{|x-X_i|} dvdy \\
        &\leq \|f_0\|_{L^\infty} \sum_{i=1}^2\iint_{\supp(f_i)\cap |x-y|\leq 3D_X(t)} \frac{1}{|x-y|} dvdy \\
        &\les_T D_X(t)^2.
    \end{align*}
    In the second inequality we used the fact that on $S^c$ there holds 
    $$|x-X_2|\leq |x-X_1|+|X_1-X_2| \leq \frac32 |x-X_1| \leq 3D_X(t).$$
    Since $D_X(t)\les_T 1$, this proves the first claim.\\
    
    The second claim is very similar so we merely sketch the proof. The key difference is that, due to a logarithmic divergence, we split into three regions:
    \begin{align*}
        S_1 &= \{y,v: |x-X_1|\leq 2D_X(t)\},\\
        S_2 &= \{y,v: 2D_X(t)<|x-X_1|< 2C_T\},\\
        S_3 &= \{y,v: |x-X_1|\geq 2C_T\},
    \end{align*}
    where $C_T$ is a constant such that $D_X(t)\leq C_T$ for $t\in [0,T]$ to ensure the region is well-defined. Using the same argument used to handle the contribution from $S^c$ in the proof of the first claim, we see that the contribution from $S_1$ is $\les_T D_X(t)$, since $|\nabla V(x)|\les |x|^{-2}$. To handle the contribution from $S_2$ and $S_3$, note that by the mean value theorem, there exists $z$ on the segment between $X_1$ and $X_2$ such that
    $$\Big|\frac{x-X_1}{|x-X_1|^3}-\frac{x-X_2}{|x-X_2|^3} \Big| \les \frac{\big||x-X_1|-|x-X_2|\big|}{|x-z|^3} \les \frac{D_X(t)}{|x-X_1|^3},$$
    where in the last inequality we used
    $$|x-z|\geq |x-X_1|-|X_1-z|\geq |x-X_1|-|X_1-X_2|\geq \frac12 |x-X_1|.$$
    Therefore, we bound 
    \begin{align*}
        \iint_{S_2} \Big|\frac{x-X_1}{|x-X_1|^3}-\frac{x-X_2}{|x-X_2|^3}\Big|f_0(y,v)dvdy &\les \|f_0\|_{L^\infty}\iint_{S_2\cap \supp(f_0)} \frac{D_X(t)}{|x-X_1|^3}dvdy\\
        &\les_T D_X(t) \int_{2D_X(t)}^{2C_T} r^{-1}dr \les_T D_X(t)\ln(C_T/D_X(t)).
    \end{align*}
    Meanwhile, over $S_3$ we can more simply bound
    \begin{align*}
        \iint_{S_3} \Big|\frac{x-X_1}{|x-X_1|^3}-\frac{x-X_2}{|x-X_2|^3}\Big|f_0(y,v)dvdy &\les \|f_0\|_{L^\infty}\iint_{\supp(f_1) \cap |x-y|>2C_T} \frac{D_X(t)}{|x-y|^3}dvdy \\
        &\les_T D_X(t).
    \end{align*}
    Putting everything together and using $\ln(e+\cdot)\geq 1$, we arrive at the final bound. 
\end{proof}

\section{Dispersive behavior}

\subsection{Decay estimates}

The next lemma establishes a conservation law that will be used later to establish decay estimates in the repulsive case.

\begin{lem}\label{weightedconslaw}
    The following identity holds: 
    $$\left(\frac12\int |(x+it\nabla_x)\phi(t)|^2 dx + \frac12\iint |x-vt|^2 f(t) \ dxdv + t^2 P(t)\right) = C_1 + \int_1^t sP(s) ds,$$
    where $C_1$ is the value of the left hand side at $t=1$.
\end{lem}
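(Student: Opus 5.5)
We prove the differential form of the identity for smooth solutions of the regularized system (\ref{VHreg}), or directly for smooth solutions as in Lemma \ref{consenergy}:
$$\frac{d}{dt}\Big(\tfrac12\|J\phi\|_{L^2}^2 + \tfrac12\iint |x-vt|^2 f\,dxdv + t^2P(t)\Big) = tP(t), \qquad J = x+it\nabla.$$
Integrating from $1$ to $t$ then gives the statement.

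\textbf{Expansion.} Expand each weighted quantity:
$$\tfrac12\|J\phi\|^2 = \tfrac12\int|x|^2|\phi|^2 + t\int x\cdot\mathcal{J}\,dx + t^2K_\phi,$$
$$\tfrac12\iint|x-vt|^2f = \tfrac12\int|x|^2\rho - t\int x\cdot j\,dx + t^2K_f.$$
The sign of the middle term must be tracked carefully: $\Real(\overline{x\phi}\, it\nabla\phi) = -t\,x\cdot\mathcal{J}$ with $\mathcal{J}=\Imag(\bar\phi\nabla\phi)$. Since the Vlasov weight is $x-vt$, the free-transport analogue is $x-it\nabla$, but the conventions only need to match each equation's own free flow. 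We will fix signs by checking that the free flows ($V=0$) give constant quantities.

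\textbf{Differentiating the fermion part.} Use the continuity equations $\p_t\rho+\nabla\cdot j=0$ and $\p_t|\phi|^2+\nabla\cdot\mathcal{J}=0$, and the virial-type identities
$$\frac{d}{dt}\int x\cdot j = 2K_f + \int x\cdot E\,\rho,$$
$$\frac{d}{dt}\int x\cdot\mathcal{J} = 2K_\phi - \int |\phi|^2\, x\cdot\nabla(V*\rho).$$
Together with the energy derivatives from Lemma \ref{consenergy}, the free terms cancel exactly as in the linear cases. For the fermions:
$$\frac{d}{dt}\tfrac12\iint|x-vt|^2 f = -t\int x\cdot E\rho + t^2\frac{dK_f}{dt}.$$

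\textbf{Differentiating the boson part and collecting.} For the bosons, with the appropriate sign convention, we get
$$\frac{d}{dt}\tfrac12\|J\phi\|^2 = -t\int|\phi|^2\, x\cdot\nabla(V*\rho) + t^2\frac{dK_\phi}{dt}.$$
Adding $\frac{d}{dt}(t^2P) = 2tP + t^2P'$ and using $K_\phi'+K_f'+P'=0$ leaves
$$2tP - t\Big(\int \rho\, x\cdot E + \int|\phi|^2\, x\cdot\nabla(V*\rho)\Big).$$
Writing $E=-\nabla V*|\phi|^2$ and symmetrizing in $(x,y)$, the bracket becomes
$$-\iint \rho(x)|\phi(y)|^2\,(x-y)\cdot\nabla V(x-y)\,dxdy.$$
Homogeneity of degree $-1$ gives $z\cdot\nabla V(z)=-V(z)$, so the bracket equals $P$. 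The total is therefore $2tP - tP = tP$.

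\textbf{Main obstacle.} The main obstacle is justification rather than algebra. For the limiting Lagrangian weak-mild solutions, we need $x\phi\in L^2$ propagated and the moments finite. The plan is to run the computation for the regularized system, where solutions are smooth and $f^\eps$ is compactly supported, and $x\phi^\eps$ is controlled via a standard $J$-commutator/Gronwall argument. Two points need care there. First, the regularized potential $V^\eps=\chi^\eps*V$ is not homogeneous, so the identity holds only up to an error
$$t\iint \rho|\phi|^2\,\big(z\cdot\nabla V^\eps + V^\eps\big).$$
Second, this error must tend to zero, using $z\cdot\nabla V^\eps + V^\eps = \chi^\eps*(\ldots)$-type estimates and the uniform bounds. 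We then pass $\eps\to0$ using Proposition \ref{strlim} and lower semicontinuity/strong convergence of the weighted moments, which is plausible given the compact support bounds and the $H^{s'}$ convergence.
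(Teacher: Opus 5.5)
Your strategy is the paper's: derive the differential identity for the regularized system (\ref{VHreg}), where the non-homogeneity of $V^\eps$ leaves the error $t\int (C^\eps*\rho^\eps)|\phi^\eps|^2\,dx$ with $C^\eps=z\cdot\nabla V^\eps+V^\eps$, show that error vanishes, and pass to the limit term by term. Your algebra, however, contains a sign slip that the symmetrization step silently undoes. With $J=x+it\nabla$, the cross term in $\tfrac12\|J\phi\|_{L^2}^2$ is $-t\int x\cdot\mathcal{J}\,dx$, as you noticed. Carrying that sign through gives
\[
\frac{d}{dt}\tfrac12\|J\phi\|_{L^2}^2 = +\,t\int|\phi|^2\,x\cdot\nabla(V*\rho)\,dx + t^2\frac{dK_\phi}{dt},
\]
not $-t\int\cdots$. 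With your sign, the bracket $\int\rho\,x\cdot E+\int|\phi|^2x\cdot\nabla(V*\rho)$ symmetrizes to $-\iint\rho(x)|\phi(y)|^2(x+y)\cdot\nabla V(x-y)\,dxdy$, where homogeneity gives nothing. The $(x-y)$ kernel you wrote comes from the correct bracket $\int\rho\,x\cdot E-\int|\phi|^2x\cdot\nabla(V*\rho)$, so the final $tP$ is right, but only because two slips cancel. Also, under $v\leftrightarrow -i\nabla$ the weight $x-vt$ corresponds to $x+it\nabla$, not $x-it\nabla$. The two weights are really the same object, which is why the cross terms pair up into $(x-y)\cdot\nabla V(x-y)$.

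The substantive gap is the limit $\eps\to0$ in $\tfrac12\|J\phi^\eps(t)\|_{L^2}^2$. Since the statement is an identity, you need genuine convergence of this term at each fixed $t$, including $t=1$, which defines $C_1$; lower semicontinuity does not suffice. The facts you cite do not give this convergence. The compact-support bounds concern $f^\eps$ only, and $H^{s'}$ convergence of $\phi^\eps$ gives no spatial tightness. So a uniform bound on $\|J\phi^\eps\|_{L^2}$ yields only weak convergence $J\phi^\eps\rightharpoonup J\phi$.

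The paper supplies the missing compactness by bounding $\||J|^s\phi^\eps\|_{L^2}$, $s>1$, uniformly in $\eps$. It does this via $|J|^s=t^sM(t)|\nabla|^sM(-t)$, Kato--Ponce and Gronwall, then interpolates against the strong $L^2$ convergence of Proposition \ref{strlim}. Note that this route uses $|x|^s\phi_0\in L^2$, more than the stated hypothesis $x\phi_0\in L^2$.

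A route needing only $x\phi_0\in L^2$ also works. Since $J$ commutes with the free flow, $J\phi^\eps$ solves $i\partial_t J\phi^\eps+\tfrac12\Delta J\phi^\eps=(V^\eps*\rho^\eps)J\phi^\eps+it\,\nabla(V^\eps*\rho^\eps)\,\phi^\eps$. An $L^2$ energy estimate for $J\phi^\eps-J\phi^\eta$, as in the Cauchy estimate for $\phi^\eps$ in the proof of Proposition \ref{strlim}, then shows $J\phi^\eps$ is Cauchy in $L^\infty([0,T];L^2)$. In that estimate the real potential term drops out. This requires $\nabla V^\eps*\rho^\eps\to\nabla V*\rho$ in $L^1([0,T];L^\infty)$, which follows from the $L^1_{t,x}$ convergence of $\rho^\eps$ by the same $L^1$--$L^\infty$ interpolation used there for $V*\rho^\eps$. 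One of these two arguments must be added for your proof to be complete.
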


The proof of the lemma will consist of two steps: first, proving the corresponding version for the regularized system, and second, showing the convergence of each term in the regularized identity.

{
\renewcommand{\thelem}{\ref*{weightedconslaw}$^\eps$}
\begin{lem}
    The following identity holds for the regularized system:
    $$\frac{d}{dt}\left(\frac12\int |(x+it\nabla_x)\phi^\eps|^2 dx + \frac12\iint |x-vt|^2 f^\eps \ dxdv + t^2 P^\eps(t)\right) =  tP^\eps(t) + Err^\eps,$$
    where 
    $$Err^\eps = t\int (C^\eps * \rho^\eps)|\phi^\eps|^2 dx, \quad C^\eps(x) = x\cdot\nabla V^\eps(x) + V^\eps(x).$$
\end{lem}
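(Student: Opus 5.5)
The plan is a direct differentiation of each of the three terms, using the regularized equations (\ref{VHreg}). I write $W^\eps = V^\eps*\rho^\eps$, $J = x+it\nabla_x$ and $j^\eps = \int v f^\eps\,dv$. The aim is to show that, after the two "transport-type" contributions cancel, what remains is $tP^\eps$ plus a symmetric virial term that recombines into $Err^\eps$.

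\textbf{Step 1 (Vlasov part).} Using $\p_t f^\eps = -v\cdot\nabla_x f^\eps - E^\eps\cdot\nabla_v f^\eps$ and integrating by parts in $x$ and $v$, the explicit time derivative of the weight and the $v\cdot\nabla_x$ term cancel. For the force term, $\nabla_v\tfrac12|x-vt|^2 = -t(x-vt)$, so
$$\frac{d}{dt}\frac12\iint |x-vt|^2 f^\eps = -t\int E^\eps\cdot (x\rho^\eps - t j^\eps)\,dx .$$

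\textbf{Step 2 (Hartree part).} I use that $J$ commutes with $i\p_t+\tfrac12\Delta$. Hence
$$(i\p_t+\tfrac12\Delta)J\phi^\eps = W^\eps J\phi^\eps + it(\nabla W^\eps)\phi^\eps .$$
The real potential term $W^\eps J\phi^\eps$ and the Laplacian contribute nothing to $\Real\langle \p_t J\phi^\eps, J\phi^\eps\rangle$. This leaves
$$\frac{d}{dt}\frac12\|J\phi^\eps\|_{L^2}^2 = t\int x\cdot\nabla W^\eps\,|\phi^\eps|^2\,dx - t^2\int \nabla W^\eps\cdot \mathcal{J}^\eps\,dx,$$
where $\mathcal J^\eps = \Imag(\overline{\phi^\eps}\nabla\phi^\eps)$. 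I will check the sign carefully against the corresponding step in Lemma \ref{consenergy}.

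\textbf{Step 3 ($t^2P^\eps$ and cancellations).} Differentiating gives $2tP^\eps + t^2\frac{d}{dt}P^\eps$. Using the continuity equations and the evenness of $V^\eps$ exactly as in Lemma \ref{consenergy},
$$\frac{d}{dt}P^\eps = \int \nabla W^\eps\cdot\mathcal J^\eps\,dx - \int E^\eps\cdot j^\eps\,dx .$$
The first piece cancels the $t^2$ term of Step 2, and the second cancels the $t^2$ term of Step 1. What remains is
$$2tP^\eps + t\int x\cdot\nabla(V^\eps*\rho^\eps)|\phi^\eps|^2\,dx + t\int x\cdot\nabla(V^\eps*|\phi^\eps|^2)\,\rho^\eps\,dx .$$
I write both integrals as double integrals and swap $x\leftrightarrow y$ in the second, using that $\nabla V^\eps$ is odd. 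Their sum becomes
$$t\iint (x-y)\cdot\nabla V^\eps(x-y)\,\rho^\eps(y)|\phi^\eps(x)|^2\,dx\,dy = t\int ((x\cdot\nabla V^\eps)*\rho^\eps)|\phi^\eps|^2\,dx .$$
Splitting $2tP^\eps = tP^\eps + t\int (V^\eps*\rho^\eps)|\phi^\eps|^2$ then produces exactly $tP^\eps + Err^\eps$ with $C^\eps = x\cdot\nabla V^\eps + V^\eps$.

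\textbf{Main obstacle: justification.} The computation is formal, so the integrations by parts and the differentiation under the integral must be justified.

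On the Vlasov side this is harmless: $f^\eps$ is smooth with compact support in $(x,v)$ uniformly on $[0,T]$, by the support bound following Proposition \ref{unifphibds}.

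On the Hartree side, I must show that $x\phi^\eps\in L^2$ propagates and that $\|J\phi^\eps\|_{L^2}$ is differentiable. I would first establish the identity for $\phi^\eps_0$ replaced by Schwartz data, or with a cutoff weight $x e^{-\delta|x|^2}$. This is justified because the potential $W^\eps$ is smooth with all derivatives bounded on $[0,T]$, since $\rho^\eps$ is bounded and compactly supported. I would then integrate in time and remove the regularization ($\delta\to0$, or an approximation of the data in the weighted $H^1$ space) using a Gronwall bound on $\|x\phi^\eps\|_{L^2}$. That bound follows from the same commutator computation, with $\|\nabla W^\eps\|_{L^\infty}$ finite.
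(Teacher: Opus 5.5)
Your proposal is correct and follows essentially the paper's argument: commute $J$ with $i\p_t+\tfrac12\Delta$, integrate the Vlasov equation by parts against $\tfrac12|x-vt|^2$, and symmetrize the two virial terms using the oddness of $\nabla V^\eps$ to obtain $C^\eps = x\cdot\nabla V^\eps + V^\eps$. The differences are minor. You cancel the $t^2$ terms by computing $\frac{d}{dt}P^\eps$ directly from the continuity equations, whereas the paper invokes energy conservation, $t^2\frac{d}{dt}(K^\eps_\phi+K^\eps_f) = -t^2\frac{d}{dt}P^\eps$; this is the same computation. Your justification of the propagation of $x\phi^\eps\in L^2$ is an addition that the paper omits.
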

}
\begin{proof}
For the first term, we note that $J:=x+it\nabla_x$ commutes with the linear Schr\"odinger flow, so that
     \begin{align*}
         \frac12 \frac{d}{dt}\int |J\phi^\eps|^2dx &= \Imag \int J[(V^\eps*\rho^\eps)\phi^\eps]\cdot\overline{J\phi^\eps} \ dx\\
         &= t\Real\int \nabla(V^\eps*\rho^\eps)\phi^\eps \cdot \overline{J\phi^\eps} \ dx\\
         &= t\int x\cdot\nabla(V^\eps*\rho^\eps)|\phi^\eps|^2 -\Real( it\nabla(V^\eps*\rho^\eps) \cdot \phi^\eps\overline{\nabla \phi^\eps}) \ dx\\
         &= t\int x\cdot\nabla (V^\eps*\rho^\eps)|\phi^\eps|^2 \ dx + t^2 \frac{d}{dt}K_\phi^\eps(t).
     \end{align*}
     On the other hand, using the identities (which are derived by using the Vlasov equation and integrating by parts)
     $$\frac12 \frac{d}{dt} \iint |x|^2 f^\eps \ dxdv = \iint (x\cdot v) f^\eps \ dxdv,$$
     $$\frac{d}{dt}\iint (x\cdot v)f^\eps \ dx dv = 2K^\eps_f(t)+\int x\cdot E^\eps\rho^\eps \ dx,$$
     we get
     \begin{align*}
         \frac12 \frac{d}{dt}\iint |x-vt|^2 f^\eps \ dxdv &= \frac12 \frac{d}{dt}\iint (|x|^2 - 2tx\cdot v + t^2|v|^2) f^\eps \ dxdv \\
         &= -t\frac{d}{dt}\iint (x\cdot v) f^\eps \ dxdv + \frac{d}{dt}(t^2K^\eps_f(t))\\
         &= - t\int x\cdot E^\eps\rho^\eps \ dx + t^2\frac{d}{dt}K^\eps_f(t).
\end{align*}
Using $(\nabla V^\eps)(-z) = -(\nabla V^\eps)(z) $, we get
\begin{align*}
    \int x\cdot \nabla(V^\eps*\rho^\eps)|\phi^\eps|^2 dx &- \int x\cdot E^\eps\rho^\eps \ dx \\
    &= \iint x\cdot\nabla V^\eps(x-y)\rho^\eps(y)|\phi^\eps(x)|^2 dxdy + \iint y\cdot\nabla V^\eps(y-x)|\phi^\eps(x)|^2 \rho^\eps(y) dy dx\\
    &=\iint (x-y)\cdot\nabla V^\eps(x-y)\rho^\eps(y)|\phi^\eps(x)|^2 dxdv\\
    &= \int (C^\eps*\rho^\eps)|\phi^\eps|^2 dx - \iint V(x-y)\rho^\eps(y)|\phi^\eps(x)|^2dxdy\\
    &= t^{-1}Err^\eps - P^\eps(t).
\end{align*}
Therefore, using energy conservation, 
\begin{align*}
    \frac12 \frac{d}{dt}\int |J\phi^\eps|^2dx+\frac12 \frac{d}{dt}\iint |x-vt|^2 f^\eps \ dxdv &= Err^\eps -tP(t) + t^2\frac{d}{dt}(K_\phi^\eps(t)+K^\eps_f(t)) \\
    &= Err^\eps - tP^\eps(t) - t^2\frac{d}{dt}P^\eps(t) \\
    &= Err^\eps + tP^\eps(t) - \frac{d}{dt}(t^2P^\eps(t)).
\end{align*}
Moving the last term to the left hand side gives the result. 
\end{proof}

\begin{proof}[Proof of Lemma \ref{weightedconslaw}] First we will show that the error term from the previous lemma converges to zero. As a preliminary, observe that 
\begin{gather*}
    C^\eps(x) = -x\cdot\int \chi^\eps(x-y)\frac{y}{|y|^3}dy + \int y\cdot\chi^\eps(x-y)\frac{y}{|y|^3}dy = -\overline{\chi}^\eps * \nabla V,
\end{gather*}

where $\overline{\chi}^\eps(z) = z\chi^\eps(z)$. An explicit computation reveals that $\|\overline{\chi}^\eps\|_{L^p} = \eps^{-1+3/p}\|z\chi(z)\|_{L^p}$, so H\"older, Hardy-Littlewood Sobolev, Sobolev embedding, and the uniform bounds from Lemma \ref{prelimunifbds} yield 
\begin{align*}
    |\text{Err}^\eps(t)| &\les \|C^\eps*\rho^\eps\|_{L^\infty}\|\phi^\eps\|_{L^2}^2 \les \|C^\eps\|_{L^{5/2}}\|\rho^\eps\|_{L^{5/3}} \les \|\overline{\chi}^\eps\|_{L^{15/11}} \les \eps^{6/5}.
\end{align*}
Therefore, Err$^\eps(t)$ converges to zero as $\eps\to 0$.\\

Next we prove that $J\phi^\eps$ converges strongly in $L^2$ to $J\phi$. Note that $$\nabla (e^{-i|x|^2/2t}u) = -it^{-1}e^{-i|x|^2/2t}Ju$$
and more generally, for $s\geq 0$ we can define the operator $|J|^s$ via
\begin{equation}\label{Jconj}
    |J|^s = t^{s}M(t)|\nabla|^s M(-t),\quad M(t) = e^{i|x|^2/2t},
\end{equation}

so by interpolation, for some exponents $\theta_1,\theta_2>0$ we have 
    \begin{align*}
        \|J\phi^\eps - J\phi^\eta\|_{L^2} &= t\big\|\nabla [e^{-i|x|^2/2t}(\phi^\eps-\phi^\eta)] \big\|_{L^2}\\
        &\les_T \|e^{-i|x|^2/2t}(\phi^\eps-\phi^\eta)\|_{L^2}^{\theta_1} \big\||\nabla|^s \big(e^{-i|x|^2/2t}(\phi^\eps-\phi^\eta)\big)\big\|_{L^2}^{\theta_2}\\
        &\les \|\phi^\eps-\phi^\eta\|_{L^2}^{\theta_1} \big(\||J|^s\phi^\eps\|_{L^2}+\||J|^s\phi^\eta\|_{L^2}\big)^{\theta_2}.
    \end{align*}

In view of Proposition \ref{strlim}, we obtain the desired convergence of the term $\|J\phi^\eps(t)\|_{L^2}^2 \to \|J\phi(t)\|_{L^2}^2$ once we show $\||J|^s\phi^\eps\|_{L^2}$ is bounded uniformly in $\eps$. The proof of this is not much different from the proof of the uniform boundedness of $\phi^\eps$ in $H^s$, so we quickly sketch the proof. Using the mild form of the Schr\"odinger equation, (\ref{Jconj}), and the Kato-Ponce product estimate, 
\begin{align*}
    \||J|^s\phi^\eps(t)\|_{L^2} &\les \||x|^s\phi_0\|_{L^2} + \int_0^t \big\| |\nabla|^s (M(-t)\phi^\eps) \big\|_{L^2} \|V*\rho^\eps\|_{L^\infty} + \big\| |\nabla|^s V^\eps*\rho^\eps \big\|_{L^2}\|M(-t)\phi^\eps\|_{L^\infty} d\tau\\
    &\les \||x|^s\phi_0\|_{L^2} + \int_0^t \big\| |J|^s \phi^\eps \big\|_{L^2} \|\rho^\eps\|_{L^1\cap L^{5/3}} + \big\| |\nabla|^s V^\eps*\rho^\eps \big\|_{L^2}\|\phi^\eps\|_{H^s} d\tau.
\end{align*}
The second term under the integral is bounded (by a constant depending on $T$ but not $\eps$), so by using the same arguments as in the proof of the $H^s$ boundedness of $\phi$ (Proposition \ref{unifphibds}), Gronwall gives the desired bound on $|J|^s\phi^\eps$ and therefore the convergence of $J\phi^\eps$.\\

Finally, convergence of the other two terms
$$\iint |x-vt|^2 f^\eps \ dxdv \to \iint |x-vt|^2 f \ dxdv, \quad P^\eps(t)\to P(t)$$ 
follow from identical arguments used to prove convergence of the kinetic energy in the proof of Lemma \ref{limconsenergy}. 

\end{proof}

Now we use Lemma \ref{weightedconslaw} to conclude the following decay estimate in the repulsive case.

\begin{prop}\label{repdecay}
    In the repulsive case, we have
    $$P(t) \les \jbrac{t}^{-1}, \quad \int |(x+it\nabla_x)\phi|^2 dx + \iint |x-vt|^2 f \ dxdv \les \jbrac{t}$$
    and as a consequence,
    \begin{align*}
        \|\phi(t)\|_{L^6} \les \jbrac{t}^{-1/2}, \quad \|\rho(t)\|_{L^{5/3}} \les \jbrac{t}^{-3/5}.
    \end{align*}
\end{prop}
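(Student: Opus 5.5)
The plan is to exploit Lemma \ref{weightedconslaw} in the repulsive case $\gamma=1$, where every term on the left-hand side is nonnegative. Write
$$W(t) := \frac12\int |J\phi(t)|^2 dx + \frac12\iint |x-vt|^2 f(t)\,dxdv, \qquad J = x+it\nabla_x,$$
so that the identity reads $W(t) + t^2P(t) = C_1 + \int_1^t sP(s)\,ds$ for $t\geq 1$. The constant $C_1$ is finite because $x\phi_0\in L^2$, $f_0$ has compact support, and the solution has the regularity from Theorem \ref{globalex}. For $t\in[0,1]$ all the claimed bounds follow directly from energy conservation, the uniform $H^1$ bound, and the support bounds, so only $t\geq1$ needs attention.

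\textbf{Step 1: decay of $P$.} Set $y(t) := t^2P(t)$. Since $W\geq 0$ and $P\geq 0$, we get $y(t)\leq C_1 + \int_1^t s^{-1}y(s)\,ds$. Gronwall then gives only $y(t)\les t$, i.e. $P\les 1$, which is no better than energy conservation. To obtain $P\les t^{-1}$, I would instead use the $W$ term, which is also nonnegative and grows. The key input is an inequality bounding $P(t)$ by negative powers of $t$ times powers of $W(t)$. Via the pseudo-conformal structure, $\|\nabla(M(-t)\phi)\|_{L^2} = t^{-1}\|J\phi\|_{L^2}$, so Sobolev gives $\|\phi\|_{L^6}\les t^{-1}\|J\phi\|_{L^2}$, hence $\|\phi\|_{L^6}^2\les t^{-2}W$. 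On the Vlasov side, repeating the $R$-optimization of Proposition \ref{atrbd} with $|x-vt|^2/t^2$ in place of $|v|^2$ gives $\|\rho\|_{L^{5/3}}\les (t^{-2}W)^{3/5}$. Combining H\"older and HLS as in Proposition \ref{atrbd}, with interpolation against the conserved $L^1$ and $L^2$ masses, yields $P\les t^{-a}W^{b}$ with $b<1$. Feeding this into $W(t)\leq C_1+\int_1^t sP\,ds$ gives an integral inequality for $W$ whose solution should be $W\les t$.

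A cleaner alternative for Step 1 is the Illner–Rein trick: show that $\frac{d}{dt}W\leq 0$ up to a term $tP$, which is then absorbed using $\int_1^t sP\,ds\leq \int_1^t s\,\mathcal{E}(0)\,ds$. I will need to pick whichever closes. Concretely, I would first try differentiating in $t$ to get $\frac{d}{dt}(t^2P) + W' = tP$, which gives $\frac{d}{dt}(t\cdot tP)\leq tP$, i.e. $t\,(tP)'\leq -tP + (\text{terms from }W'\text{ of good sign})$. This is the step I expect to be the main obstacle, because $W'$ is not sign-definite. Failing that, I would fall back on the interpolation route above.

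\textbf{Step 2: the weighted bound.} Once $P\les\jbrac{t}^{-1}$, the identity gives $W(t)\leq C_1+\int_1^t s\cdot s^{-1}\,ds\les\jbrac{t}$, which is the claimed estimate.

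\textbf{Step 3: the decay consequences.} Using $\|\phi\|_{L^6}\les t^{-1}\|J\phi\|_{L^2}\les t^{-1}t^{1/2}$ gives $\|\phi\|_{L^6}\les\jbrac{t}^{-1/2}$. For the density, split $\rho(t,x)\leq \|f_0\|_\infty R^3 + (R t)^{-2}\int|x-vt|^2f\,dv$ and optimize in $R$ to get $\rho\les t^{-6/5}\big(\int|x-vt|^2f\,dv\big)^{3/5}$. Taking the $L^{5/3}$ norm gives $\|\rho\|_{L^{5/3}}\les t^{-6/5}W^{3/5}\les t^{-6/5+3/5}=t^{-3/5}$.
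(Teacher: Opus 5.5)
Your Steps 2 and 3 match the paper's argument, but Step 1 is left unresolved. The cause is an arithmetic slip, not a real obstacle. From the identity and $W\geq 0$ you correctly get $y(t)\leq C_1+\int_1^t s^{-1}y(s)\,ds$ for $y=t^2P$, and Gronwall then gives $y(t)\leq C_1 t$. But $t^2P(t)\leq C_1t$ says $P(t)\leq C_1t^{-1}$, not $P\les 1$ (that would correspond to $y\les t^2$). This is exactly the paper's proof. Gronwall on $I(t)=t^2P(t)$ gives $P\les\jbrac{t}^{-1}$. Then, dropping $t^2P(t)\geq 0$ from the left-hand side (this is where $\gamma=1$ is used), you get $W(t)\leq C_1+\int_1^t sP(s)\,ds\les\jbrac{t}$. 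There is no need to differentiate or to control the sign of $W'$.

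The replacement routes you propose would not close. You have $\|\phi\|_{L^6}\les t^{-1}W^{1/2}$ and $\|\rho\|_{L^{5/3}}\les t^{-6/5}W^{3/5}$. Hence any H\"older--HLS bound that interpolates $\rho$ between $L^1$ and $L^{5/3}$ and $\phi$ between $L^2$ and $L^6$ has total degree exactly $1/2$ in $t^{-2}W$; for example $P\les\|\rho\|_{L^{6/5}}\|\phi\|_{L^{12/5}}^2\les t^{-1}W^{1/2}$. Feeding this into $W(t)\leq C_1+\int_1^t sP\,ds$ gives $W\leq C_1+C\int_1^t W^{1/2}\,ds$. Its solutions grow like $t^2$, which only returns $P\les 1$. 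Bounding $\int_1^t sP\,ds$ by $\mathcal{E}(0)t^2$ likewise gives only $W\les t^2$. Linear growth comes from writing the source as $tP=t^{-1}(t^2P)$ and closing Gronwall on $t^2P$ itself, which is the step you discarded. Restore it and your proof is complete.

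A minor point: on $[0,1]$ the bound on $\|x\phi(t)\|_{L^2}$ does not follow from energy conservation or the $H^1$ bound. It follows from the identity itself, which gives $W(t)\leq C_1$ there when $P\geq 0$.
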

\begin{proof}
    Recalling Lemma \ref{weightedconslaw} and setting $I(t) = t^2 P(t)$, we see that
    \begin{align}\label{integratedID}
        \frac12\int |J\phi|^2 dx + \frac12\iint |x-vt|^2 f \ dxdv + I(t) = C + \int_1^t s^{-1} I(s)ds.
    \end{align}
    Using the nonnegativity of the first two terms and Gronwall, we obtain
    $$I(t) \leq C_1 e^{\int_1^t s^{-1}ds} = C_1 t \implies P(t) \les \jbrac{t}^{-1},$$
    Therefore, inserting this into (\ref{integratedID}) and crucially using the nonnegativity of $P(t)$ in the repulsive case, we get
    $$\int |J\phi|^2 dx + \iint |x-vt|^2 f \ dxdv \les C_1+t \les \jbrac{t}.$$
    To obtain the decay estimate for $\phi$, recall that
    $$\nabla_x (e^{-i|x|^2/2t}\phi) = -it^{-1}e^{-i|x|^2/2t}J\phi.$$
    Therefore, by Sobolev embedding and the growth estimate for $\int |J\phi|^2dx$,
    $$\|\phi\|_{L^6} = \|e^{-i|x|^2/2t}\phi\|_{L^6} \les \|\nabla_x(e^{-i|x|^2/2t}\phi)\|_{L^2} \leq t^{-1}\|J\phi\|_{L^2}\les \jbrac{t}^{-1/2}.$$
    
    To prove the decay estimate for $\rho$, we split:
    \begin{align*}
    	\rho(t,x)  &= \int_{|x-vt|\leq R} f \ dv + \int_{|x-vt|>R} f \ dv\\
	&\leq  \int_{|x/t-v|\leq R/t} f \ dv + R^{-2} \int_{|x-vt|>R} |x-vt|^2 f \ dv\\
	&\les (R/t)^{3}\|f_0\|_{L^\infty_{x,v}} + R^{-2} \int |x-vt|^2 f \ dv.
    \end{align*}
    Therefore, optimizing in $R$ by choosing $R=\big(t^3\|f_0\|_{L^\infty_{x,v}}^{-1}\int |x-vt|^2 f\ dv\big)^{1/5}$ gives
    $$\rho(t,x) \les t^{-6/5}\|f_0\|_{L^\infty_{x,v}}^2 \left( \int |x-vt|^2 f\ dv\right)^{3/5}.$$
    Therefore, raising both sides to the power $5/3$, integrating in $x$, and using the linear growth of $\iint |x-vt| f \ dxdv$, we get
    $$ \|\rho(t)\|_{L^{5/3}} \les \jbrac{t}^{-3/5}.$$
\end{proof}

\subsection{Growth bounds on top-order norms of $\phi$}

In this section we prove the growth bounds on $\|\phi(t)\|_{H^s}$ in terms of $Q(t)$, enabling us to close the estimates on $Q(t)$ and in turn the decay estimates for $\rho(t)$ and $E(t)$. Then, using similar arguments we also prove growth bounds for $\||J|^s\phi(t)\|_{L^2}$, enabling us to prove the decay estimate for $\phi(t)$ in $L^\infty$.

\begin{prop}\label{Hsbd}
    There holds 
    $$(i) \quad \|\phi(t)\|_{H^s}\les Q(t)^{\frac{5s}{3}-2}\jbrac{t}^{\frac{s}{3}} \quad \text{ in the repulsive case }\gamma=1,$$
    $$(ii) \quad \|\phi(t)\|_{H^s}\les Q(t)^{\frac{5s}{3}-2}\jbrac{t} \quad \text{ in the attractive case }\gamma=-1.$$
    Also, in the repulsive case, we have
    $$\||J|^s\phi(t)\|_{L^2}\les \jbrac{t}^{\frac{4s}{3}-\frac12}Q(t)^{\frac{5s}{3}-2}.$$
\end{prop}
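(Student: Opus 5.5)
The plan is to estimate $\frac{d}{dt}\|\phi(t)\|_{\dot H^s}^2$ directly and use a commutator to remove the top-order term, instead of the Gronwall-type argument used in Proposition \ref{unifphibds}. With $\Lambda=|\nabla|$ and $g=V*\rho$, the Hartree equation gives
$$\tfrac12\tfrac{d}{dt}\|\Lambda^s\phi\|_{L^2}^2=\Imag\int \Lambda^s(g\phi)\,\overline{\Lambda^s\phi}\,dx=\Imag\int [\Lambda^s,g]\phi\,\overline{\Lambda^s\phi}\,dx,$$
because $\int g|\Lambda^s\phi|^2\,dx$ is real. Hence
$$\frac{d}{dt}\|\phi\|_{\dot H^s}\les\|[\Lambda^s,g]\phi\|_{L^2}.$$
I will carry out this computation for the regularized system, where everything is smooth, with bounds uniform in $\eps$. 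I will then pass to the limit using the lower semicontinuity of the $H^s$ norm under the convergence in Proposition \ref{strlim}, together with the convergence of the velocity supports.

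\textbf{Commutator estimate.} I will use the Kato--Ponce commutator estimate, but with exponents chosen so that $\phi$ only appears through norms controlled by $H^1$:
$$\|[\Lambda^s,g]\phi\|_{L^2}\les \|\nabla g\|_{L^{\frac{3}{2-s}}}\|\Lambda^{s-1}\phi\|_{L^{\frac{6}{2s-1}}}+\|\Lambda^s g\|_{L^3}\|\phi\|_{L^6}.$$
Since $s<2$, Sobolev embedding gives $\|\Lambda^{s-1}\phi\|_{L^{6/(2s-1)}}\les\|\nabla\phi\|_{L^2}$ and $\|\phi\|_{L^6}\les\|\nabla\phi\|_{L^2}$. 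By the Hardy--Littlewood--Sobolev inequality (Lemma \ref{HLS}), both $\|\nabla V*\rho\|_{L^{3/(2-s)}}$ and $\|\Lambda^{s}V*\rho\|_{L^3}=\|\Lambda^{s-2}\rho\|_{L^3}$ are bounded by $\|\rho\|_{L^{3/(3-s)}}$. Interpolating $L^{3/(3-s)}$ between $L^{5/3}$ and $L^\infty$, and using $\|\rho\|_{L^\infty}\les Q^3$, gives
$$\|\rho\|_{L^{3/(3-s)}}\les \|\rho\|_{L^{5/3}}^{\frac{5(3-s)}{9}}\,Q^{\frac{5s}{3}-2}.$$
Combining these,
$$\frac{d}{dt}\|\phi\|_{\dot H^s}\les \|\nabla\phi\|_{L^2}\,\|\rho\|_{L^{5/3}}^{\frac{5(3-s)}{9}}\,Q(t)^{\frac{5s}{3}-2}.$$

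\textbf{Integrating in time.} $Q$ is nondecreasing, so I can pull $Q(t)^{\frac{5s}{3}-2}$ out of the time integral.
\begin{itemize}
\item In the attractive case, Proposition \ref{atrbd} bounds $\|\nabla\phi\|_{L^2}$ and $\|\rho\|_{L^{5/3}}$, so the integrand is $O(1)$ and integrating gives (ii).
\item In the repulsive case, Proposition \ref{repdecay} gives $\|\rho\|_{L^{5/3}}^{5(3-s)/9}\les\jbrac{\tau}^{-(3-s)/3}$ while $\|\nabla\phi\|_{L^2}$ stays bounded. Integrating $\jbrac{\tau}^{-(3-s)/3}$ produces the factor $\jbrac{t}^{s/3}$, which gives (i).
\end{itemize}

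\textbf{The $|J|^s$ bound.} I will use the conjugation identity (\ref{Jconj}), $|J|^s=t^sM(t)\Lambda^sM(-t)$, and apply the same commutator argument to $\psi=M(-t)\phi$, using that $|J|^s$ commutes with the free flow as in the proof of Lemma \ref{weightedconslaw}. The multiplier $g$ is unchanged, and the $\phi$-norms become $\|\nabla\psi\|_{L^2}=t^{-1}\|J\phi\|_{L^2}\les t^{-1/2}$ by Proposition \ref{repdecay}. This gives
$$\frac{d}{dt}\||J|^s\phi\|_{L^2}\les t^{s}\,t^{-1/2}\,t^{-(3-s)/3}\,Q^{\frac{5s}{3}-2}+(\text{lower order}).$$
The lower-order contribution comes from the time derivative of the $t^sM(t)$ factors; that derivative is exactly what the Schr\"odinger commutation absorbs. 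Integrating yields $\jbrac{t}^{\frac{4s}{3}-\frac12}Q^{\frac{5s}{3}-2}$.

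\textbf{Main obstacle.} The step I expect to be hardest is justifying the fractional commutator estimate with these particular off-endpoint exponents, so that no $\|\phi\|_{H^s}$ appears on the right-hand side. If the clean form fails, I would first try a Littlewood--Paley paraproduct decomposition of $[\Lambda^s,g]\phi$. In that decomposition the high-frequency part of $g$ paired with the low-frequency part of $\phi$ is the one that produces the $\|\Lambda^sg\|_{L^3}$ term. I would also need to handle the $|J|^s$ version carefully near $t=0$, where I would simply use the local-in-time bound from the previous section.
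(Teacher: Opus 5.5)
Your proposal is correct and follows the same strategy as the paper. Both arguments run an $\dot H^s$ energy estimate (and, after conjugating by $M(t)$, the analogous $|J|^s$ estimate) in which the top-order term $\Imag\int (V*\rho)|\Lambda^s\phi|^2\,dx$ vanishes, then use a commutator bound that sees $\phi$ only at the $H^1$ level, HLS down to $\|\rho\|_{L^{3/(3-s)}}$, interpolation against $\|\rho\|_{L^\infty}\les Q^3$, and time integration via Propositions \ref{atrbd} and \ref{repdecay}, arriving at identical exponents. The only difference is technical: you apply the classical Kato--Ponce commutator estimate for $[\Lambda^s,V*\rho]$ directly, whereas the paper factors $|\nabla|^s=R\cdot|\nabla|^{\delta}\nabla$ and uses a $\delta$-order commutator, which produces an extra endpoint term $\||\nabla|^{\delta}V*\rho\|_{L^\infty}\|\nabla\phi\|_{L^2}$ that it handles with Lemma \ref{potest}(i). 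Since all your exponents lie strictly in $(1,\infty)$, the step you flagged as the main obstacle is standard, and there is in fact no ``lower order'' remainder in the $|J|^s$ identity, because $|J|^s$ commutes exactly with the free flow.
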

\begin{proof}
The key idea in obtaining the improved growth rate is to remove the effect from the top order term in the energy estimate from Proposition \ref{unifphibds}, which requires use of commutator estimates. To this end, let $s=1+\delta$ where $\delta\in (1/2,1)$, and write $|\nabla|^s = R\cdot|\nabla|^\delta\nabla$, where $R$ is the vector-valued Riesz transform. Then
\begin{align}\label{commid}
    |\nabla|^s (uv) &= R\cdot |\nabla|^\delta(u\nabla v + v\nabla u)\notag \\
    &= R\cdot \big( u|\nabla|^\delta\nabla v + v|\nabla|^\delta \nabla u + C^\delta(u,\nabla v) + C^\delta(v,\nabla u)\big),
\end{align}

where
$$C^\delta(w_1,w_2) = |\nabla|^\delta (w_1w_2) - w_1|\nabla|^\delta w_2.$$

Recall the following commutator estimate from Theorem 5.1 of \cite{li_kato-ponce_2019}:
 \begin{equation}\label{commest}
     \big\|C^\delta(w_1,w_2)\big\|_{L^p} \les \big\||\nabla|^{\delta}w_1\big\|_{L^q}\|w_2\|_{L^r},
 \end{equation}
 for all $1<p<\infty, 1<q,r\leq \infty$ and $\delta>0$ satisfying $1/p = 1/q + 1/r$. By the Schr\"odinger energy estimate,
\begin{align*}
    \|\phi(t)\|_{\dot{H^s}}^2 = \|\phi_0\|_{\dot{H}^s}^2 + 2\int_0^t \Imag \jbrac{ |\nabla|^s [(V*\rho)\phi](\tau), |\nabla|^s \phi(\tau) }d\tau.
\end{align*}

We use (\ref{commid}) with $u=V*\rho$ and $v=\phi$. When all derivatives fall on $\phi$, the fact that $R$ is $L^2$-self adjoint with $R\cdot R = \text{Id}$ and the identity $|\nabla|^s = R\cdot|\nabla|^\delta\nabla$ gives
\begin{align*}
    \Imag \jbrac{ R\cdot((V*\rho)|\nabla|^\delta\nabla\phi), |\nabla|^s\phi } = \Imag\jbrac{ (V*\rho)|\nabla|^\delta\nabla\phi, |\nabla|^\delta\nabla\phi } = 0.
\end{align*}
Therefore, the top order term in the energy estimate vanishes. Using the $L^2$ boundedness of $R$, (\ref{commid}), H\"older, (\ref{commest}), Sobolev embedding, and Lemmas \ref{HLS} and \ref{potest}, and interpolation,
\begin{align*}
    \|\phi(t)\|_{\dot{H^s}}^2 &\les \|\phi_0\|_{\dot{H}^s}^2 + \int_0^t \Big(\big\||\nabla|^\delta \nabla V*\rho\big\|_{L^3} \|\phi\|_{L^6} + \big\||\nabla|^\delta V*\rho\big\|_{L^\infty} \|\nabla \phi\|_{L^2} +  \\
    & \qquad\qquad\qquad\quad +\|\nabla V*\rho\|_{L^{\frac{3}{1-\delta}}} \big\||\nabla|^\delta\phi\|_{L^{\frac{6}{1+2\delta}}}\Big) \|\phi\|_{\dot{H}^s} d\tau\\
    &\les \|\phi_0\|_{\dot{H}^s}^2 + \int_0^t \Big(\big\||\nabla|^\delta \nabla V*\rho\big\|_{L^3} + \big\||\nabla|^\delta V*\rho\big\|_{L^\infty} + \|\nabla V*\rho\|_{L^\frac{3}{1-\delta}}\Big) \|\phi\|_{H^1}\|\phi\|_{\dot{H}^s} d\tau\\
    &\les \|\phi_0\|_{\dot{H}^s}^2 + \int_0^t  \Big(\|\rho\|_{L^\frac{3}{2-\delta}} +\|\rho\|_{L^{5/3}}^{\frac{10-5\delta}{9}}\|\rho\|_{L^\infty}^{\frac{5\delta-1}{9}} + \|\rho\|_{L^\frac{3}{2-\delta}} \Big)\|\phi\|_{H^1}\|\phi\|_{\dot{H}^s} d\tau\\
    &\les \|\phi_0\|_{\dot{H}^s}^2 + \int_0^t  \|\rho\|_{L^{5/3}}^{\frac{10-5\delta}{9}}\|\rho\|_{L^\infty}^{\frac{5\delta-1}{9}} \|\phi\|_{H^1}\|\phi\|_{\dot{H}^s} d\tau.
\end{align*}
Using Gronwall, the bounds $\|\phi(t)\|_{H^1}\les 1$ and $\|\rho(t)\|_{L^{5/3}}\les \jbrac{t}^{-3/5}$, and the trivial bound $\|\rho(t)\|_{L^\infty}\les Q(t)^3$ yields 
\begin{align*}
    \|\phi(t)\|_{\dot{H}^s} &\les \|\phi_0\|_{\dot{H}^s} + \int_0^t \|\rho(\tau)\|_{L^{5/3}}^{\frac{10-5\delta}{9}} Q(\tau)^{\frac{5\delta-1}{3}} d\tau \\
    &\les \begin{cases}
        Q(t)^{\frac{5\delta-1}{3}}\jbrac{t}^{\frac{1+\delta}{3}} & \text{in the repulsive case } \gamma=1,\\
        Q(t)^{\frac{5\delta-1}{3}}\jbrac{t} & \text{in the attractive case } \gamma=-1.
    \end{cases}
\end{align*}
Now we prove the estimate on $|J|^s\phi$. We have, with $w = M(-t)\phi$,
\begin{align*}
    \big\||J|^s\phi(t)\big\|_{L^2}^2 &= \big\||x|^s\phi_0\big\|_{L^2}^2+2\int_0^t\Imag\jbrac{ |J|^s[(V*\rho)\phi](\tau),|J|^s\phi(\tau)} d\tau\\
    &= \big\||x|^s\phi_0\big\|_{L^2}^2+2\int_0^t t^{2s}\Imag\jbrac{ |\nabla|^s[(V*\rho)w](\tau),|\nabla|^s w(\tau)} d\tau.
\end{align*}

Then, estimating in the nearly the same manner as before, and using (\ref{Jconj}) and Proposition \ref{repdecay} yields 
\begin{align*}
    \||J|^s\phi(t)\|_{L^2}^2 &\les \||x|^s\phi_0\|_{L^2}^2 + \int_0^t \tau^{2s}\Big(\big\||\nabla|^\delta \nabla V*\rho\big\|_{L^3} \|w\|_{L^6} + \big\||\nabla|^\delta V*\rho\big\|_{L^\infty} \|\nabla w\|_{L^2} +  \\
    & \qquad\qquad\qquad\quad +\|\nabla V*\rho\|_{L^{\frac{3}{1-\delta}}} \big\||\nabla|^\delta w\|_{L^{\frac{6}{1+2\delta}}}\Big) \||\nabla|^s w\|_{L^2} d\tau\\
    &\les 1 + \int_0^t \tau^s \Big(\big\||\nabla|^\delta \nabla V*\rho\big\|_{L^3} + \big\||\nabla|^\delta V*\rho\big\|_{L^\infty} + \|\nabla V*\rho\|_{L^\frac{3}{1-\delta}}\Big) \||\nabla|w\|_{L^2}\||J|^s \phi\|_{L^2} d\tau\\
    &\les 1 + \int_0^t  \tau^{s-1} \Big(\|\rho\|_{L^\frac{3}{2-\delta}} +\|\rho\|_{L^{5/3}}^{\frac{10-5\delta}{9}}\|\rho\|_{L^\infty}^{\frac{5\delta-1}{9}} + \|\rho\|_{L^\frac{3}{2-\delta}} \Big)\|J\phi\|_{L^2}\||J|^s\phi\|_{L^2} d\tau\\
    &\les 1 + \int_0^t  \tau^{s-\frac12} \|\rho\|_{L^{5/3}}^{\frac{10-5\delta}{9}}\|\rho\|_{L^\infty}^{\frac{5\delta-1}{9}} \||J|^s\phi\|_{L^2} d\tau\\
    &\les 1 + \int_0^t \tau^{s-\frac12}\jbrac{\tau}^{-1+\frac{s}{3}}Q(\tau)^{\frac{5s}{3}-2}\||J|^s\phi\|_{L^2} d\tau.
\end{align*}
Therefore, Gronwall yields 
$$\||J|^s\phi(t)\|_{L^2} \les \jbrac{t}^{\frac{4s}{3}-\frac12}Q(t)^{\frac{5s}{3}-2}.$$

\end{proof}

\subsection{Bounds on the velocity support}

Using the improved growth bounds on $\phi$ in $H^s$ from Proposition \ref{Hsbd}, we can obtain more refined growth estimates on $Q(t)$ via a nonlinear Gronwall inequality.

\begin{prop}\label{Qbd}
There holds
    $$(i) \quad Q(t) \les \ln^2\jbrac{t} \quad \text{in the repulsive case},$$
    $$(ii) \quad Q(t) \les \jbrac{t}\ln\jbrac{t} \quad \text{in the attractive case}.$$
\end{prop}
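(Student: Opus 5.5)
The plan is to combine the characteristic bound (\ref{Qepsbd}), now for the limiting solution, with the refined field estimate in Lemma \ref{fieldbds}, and then feed in Proposition \ref{Hsbd} to obtain a nonlinear Gronwall-type inequality for $Q$. Since $f(t)=f_0\circ\Phi_t^{-1}$, the velocity support satisfies
$$Q(t)\le Q(0)+\int_0^t\|E(\tau)\|_{L^\infty}\,d\tau .$$
Lemma \ref{fieldbds} gives
$$\|E\|_{L^\infty}\les \|\phi\|_{L^6}^2\ln\big(e+\|\phi\|_{H^s}^2/\|\phi\|_{L^6}^2\big).$$
The map $r\mapsto r\ln(e+A/r)$ is increasing, so in each case we may replace $\|\phi\|_{L^6}^2$ by any available upper bound $a(\tau)$.

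\textbf{Repulsive case.} Here Proposition \ref{repdecay} gives $a(\tau)=\jbrac{\tau}^{-1}$. Proposition \ref{Hsbd}(i) gives $\|\phi\|_{H^s}^2\les Q^{\frac{10s}{3}-4}\jbrac{\tau}^{2s/3}$. Therefore
$$\|E(\tau)\|_{L^\infty}\les \jbrac{\tau}^{-1}\big(\ln\jbrac{\tau}+\ln(e+Q(\tau))\big),$$
and hence
$$Q(t)\les 1+\ln^2\jbrac{t}+\int_0^t\jbrac{\tau}^{-1}\ln(e+Q(\tau))\,d\tau .$$
Since $Q$ is nondecreasing, the integral is at most $\ln\jbrac{t}\,\ln(e+Q(t))$. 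This yields the implicit bound
$$Q(t)\le C\ln^2\jbrac{t}+C\ln\jbrac{t}\,\ln(e+Q(t)).$$
If $Q\ge \ln^2\jbrac{t}$, the right-hand side is $O\big(\ln^2\jbrac{t}+\ln\jbrac{t}\ln\ln\jbrac{t}+\ln\jbrac{t}\ln Q\big)$. The term $\ln\jbrac{t}\ln Q$ is $o(Q)$ once $Q\gg \ln^2\jbrac{t}$, so it can be absorbed, and $Q\les\ln^2\jbrac{t}$ follows. Equivalently, one can run a nonlinear Gronwall argument on $y=Q$ with $y'\les \jbrac{t}^{-1}(\ln\jbrac{t}+\ln(e+y))$; its solutions are $O(\ln^2)$, because $\ln y$ contributes only $\ln\ln$.

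\textbf{Attractive case.} Here $\|\phi\|_{L^6}^2\les\|\phi\|_{H^1}^2\les1$ by Proposition \ref{atrbd}, so $a\equiv C$. Proposition \ref{Hsbd}(ii) gives $\|\phi\|_{H^s}^2\les Q^{\frac{10s}{3}-4}\jbrac{\tau}^2$, so
$$\|E(\tau)\|_{L^\infty}\les \ln\jbrac{\tau}+\ln(e+Q(\tau)).$$
This yields
$$Q(t)\les 1+t\ln\jbrac{t}+t\ln(e+Q(t)).$$
Testing the candidate $Q\sim t\ln t$, the last term is $t(\ln t+\ln\ln t)$, which is consistent. The same absorption argument then gives $Q\les \jbrac{t}\ln\jbrac{t}$: either $Q\le Mt\ln\jbrac{t}$, or else $\ln(e+Q)\le 2\ln Q$ and $t\ln Q\ll Q$ for $M$ large.

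\textbf{Main obstacle.} The delicate point is rigor in applying Proposition \ref{Hsbd}, whose bounds are stated in terms of $Q(t)$ itself, so the argument is circular only in a harmless way. Because every dependence on $Q$ enters through a logarithm, the inequality closes, but this must be done carefully with the monotonicity of $Q$, which holds by its definition as a supremum over $\tau\in[0,t]$. One also needs the a priori finiteness of $Q(t)$ from Proposition \ref{unifphibds} so that the absorption step is legitimate. A further check is that all the bounds used (Lemma \ref{fieldbds}, Propositions \ref{repdecay} and \ref{Hsbd}) hold for the limiting solution, which they do by the passage to the limit already established. For small $t$, everything is bounded by Proposition \ref{unifphibds}, so only large $t$ matters.
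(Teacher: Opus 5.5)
Your proof is correct, and it reaches the same field bounds as the paper by the same route: the characteristic bound $Q(t)\le Q(0)+\int_0^t\|E(\tau)\|_{L^\infty}d\tau$, Lemma \ref{fieldbds}, monotonicity of $r\mapsto r\ln(e+A/r)$, and Propositions \ref{repdecay} and \ref{Hsbd}. These give $\|E(\tau)\|_{L^\infty}\les \jbrac{\tau}^{-1}\big(\ln\jbrac{\tau}+\ln(e+Q(\tau))\big)$ in the repulsive case and $\|E(\tau)\|_{L^\infty}\les\ln\jbrac{\tau}+\ln(e+Q(\tau))$ in the attractive case.

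You differ from the paper only in how you close the estimate. The paper keeps the integral form, calls the right-hand side $y(t)$, and solves a differential inequality. In the repulsive case it subtracts $C(\ln^2(e+t)-1)$ to reach $w'\le C\ln(e+w)/(e+t)$, then runs a Bihari-type argument with $F(u)=\int_0^u dr/\ln(e+r)$ and the inversion bound $F^{-1}(u)\le 3u\ln(e+u)$. In the attractive case it rescales $w=y/\ln(e+t)$ and uses $\ln u\le u$ to show $w'$ is bounded.

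You instead use that $Q$ is nondecreasing to pull $\ln(e+Q(t))$ out of the time integral. This turns both cases into one pointwise implicit inequality $Q\le A(t)+B(t)\ln(e+Q)$, with $A=C\ln^2\jbrac{t}$, $B=C\ln\jbrac{t}$ in the repulsive case and $A=C\jbrac{t}\ln\jbrac{t}$, $B=Ct$ in the attractive case. That inequality closes by absorption once $Q(t)<\infty$ is known from Proposition \ref{unifphibds}.

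Your route is shorter, treats both cases uniformly, and avoids the auxiliary comparison inequalities the paper needs in its homogenization steps. It also loses nothing in the rate: at the claimed growth, the extracted term $B\ln(e+Q)$ is of order $\ln\jbrac{t}\ln\ln\jbrac{t}$, resp. $t\ln\jbrac{t}$. The one thing you give up is not needing to freeze $Q$ at the endpoint, but that costs nothing here, since Proposition \ref{Hsbd} already uses $Q(\tau)\le Q(t)$.

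When you write it up, make the absorption quantitative and uniform in $t\ge 0$ rather than arguing ``for large $t$''. Set $M=Q/\ln^2\jbrac{t}$ (resp. $M=Q/(\jbrac{t}\ln\jbrac{t})$), and use $\ln(e+ab)\le\ln(e+a)+\ln(e+b)$ together with $\ln\jbrac{t}\ge\ln\sqrt2$. This gives $M\le C+C\ln(e+M)$, hence $M\les 1$.
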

\begin{proof}
    First, recall from Lemma \ref{fieldbds} that
    $$\|E\|_{L^\infty} \les \|\phi\|_{L^6}^2\ln\Big(e+\frac{\|\phi\|_{H^s}^2}{\|\phi\|_{L^6}^2}\Big).$$
    In the repulsive case, we use Proposition \ref{repdecay}, Proposition \ref{Hsbd}, and the fact that $x\mapsto x\ln(e+1/x)$ is increasing to obtain 
    \begin{equation}\label{repEbd}
        \|E(t)\|_{L^\infty}\les \jbrac{t}^{-1}\ln(e+CQ(t)^{\frac{5s-6}{3}}\jbrac{t}^{\frac{s}{3}})\les (e+t)^{-1}\big(\ln(e+t) + \ln(e+Q(t))\big).
    \end{equation}
    The attractive case can be handled similarly after using Sobolev embedding $H^1\hookrightarrow L^6$ and Lemma \ref{atrbd} and Proposition \ref{Hsbd}:
    \begin{equation}\label{atrEbd}
        \|E(t)\|_{L^\infty}\les \ln(e+CQ(t)^{\frac{5s-6}{3}}\jbrac{t})\les \ln(e+t) + \ln(e+Q(t)).
    \end{equation}
    
Now we bound $Q(t)$.\\ 

\emph{(i) The repulsive case.} Recalling from the support estimate in the proof of the existence theorem that $|v|>k + |\int_0^t E(s,X(s))ds|$ implies $|V(0,t,x,v)|>k$, the velocity support is bounded by  
\begin{align*}
    Q(t) &\leq k + \Big|\int_0^t E(s,X(s,t,x,v))  ds\Big| \leq k + C\int_0^t  \frac{\ln(e+s)+\ln(e+Q(s))}{e+s}ds.
\end{align*}
Let $y(t)$ be the rightmost side of the inequality above, so that $Q(t)\leq y(t)$. Then 
$$y'(t) = C(e+t)^{-1}\big(\ln(e+t) + \ln(e+Q(t))\big) \leq C(e+t)^{-1}\big(\ln(e+t) + \ln(e+y(t))\big).$$

We first ``homogenize" the differential inequality above in order to recast it in the form $w'(t)\leq a(t)\ln(e+w(t))$, which is more amenable to Gronwall-type estimates. To this end let
$$w(t) = y(t)-2C\int_0^t\frac{\ln(e+s)}{e+s}ds = y(t) - C(\ln^2(e+s)-1),$$
so that, using the sublinearity of $x\mapsto \ln(e+x)$,
\begin{align*}
    w'(t) &= y'(t) -\frac{2C\ln(e+t)}{e+t}\\
    &\leq \frac{C}{e+t}\left( -\ln(e+t) + \ln\big(e+w(t) + C(\ln^2(e+t)-1)\big)\right)\\
    &\leq \frac{C}{e+t}\left(-\ln(e+t) + \ln\left(e+C(\ln^2(e+t)-1)\right)
     + \ln(e+w(t))\right)\\
     &\leq C\frac{\ln(e+w(t))}{e+t}.
\end{align*}
In the last line we used $\ln\left(e+C(\ln^2(e+t)-1)\right) \leq \ln(e+t)$, which can be seen by noting that the two functions agree at $t=0$ and are monotone increasing but the former grows slower. Dividing this inequality by $\ln(e+w(t))$ and integrating, we obtain
$$F(w(t)) \leq F(w(0)) + C\int_0^t \frac{ds}{e+s} \leq F(k) + C\ln(e+t),$$
where
$$F(u) = \int_0^u \frac{dr}{\ln(e+r)}.$$
To bound $F^{-1}$, observe that $F(u) \geq \frac{u}{\ln(e+u)}$, and as a consequence,
\begin{align*}
    F(3u\ln(e+u)) &\geq \frac{3u\ln(e+u)}{\ln(e+3u\ln(e+u))}\\
    &\geq \frac{3u\ln(e+u)}{\ln(e+u)+\ln(e+3\ln(e+u))} \geq u.
\end{align*}

In the final inequality used the fact that $\ln(e+3\ln(e+u)) \leq 2\ln(e+u)$, which holds at $u=0$ in view of $\ln(e+3) \approx 1.74 \leq 2$ and the two functions are monotone increasing with the former growing slower, so it holds for all $t$. Since $F$ and $F^{-1}$ are monotone increasing, this implies $F^{-1}(u)\leq 3u\ln(e+u)$. Therefore,

\begin{align*}
    w(t) &\leq F^{-1}\ip{ F(k) + C\ln(e+t) }\\
    &\leq 3(F(k) + C\ln(e+t))\ln(e+F(k) + C\ln(e+t))\\
    &\les \ln\jbrac{t}\ln\jbrac{\ln\jbrac{t}}.
\end{align*}
Hence $y(t) \les \ln^2\jbrac{t} +  \ln\jbrac{t}\ln\jbrac{\ln\jbrac{t}} \les \ln^2\jbrac{t}$, as desired.\\

\emph{(ii) The attractive case.} Here, our differential inequality takes the form 
$$y'(t) \leq C\big(\ln(e+t) + \ln(e+y(t)) \big),$$
where 
$$y(t) = k + C\int_0^t  \ln(e+s)+\ln(e+Q(s)) ds.$$
We ``homogenize" in a slightly different manner by choosing 
$$w(t) = \frac{y(t)}{\ln(e+t)}.$$
This leads to
$$w'(t) = \frac{y'(t)}{\ln(e+t)
} - \frac{y(t)}{(e+t)\ln^2(e+t)}\leq C+C\frac{\ln(e+y(t))}{\ln(e+t)}-\frac{w(t)}{(e+t)\ln(e+t)}.$$

Note that $\ln(u)\leq u$, so with $u=\frac{e+y(t)}{C(e+t)\ln(e+t)}$, we get
\begin{align*}
    \ln(e+y(t)) &\leq \frac{e+y(t)}{C(e+t)\ln(e+t)} + \ln(C(e+t)\ln(e+t)) \\
    &= \frac{e}{C(e+t)} + \frac{w(t)}{C(e+t)} + \ln(C(e+t)\ln(e+t)).
\end{align*}
Crucially, this cancels the term involving $w(t)$ on the right hand side:
\begin{align*}
    w'(t) \leq C + \frac{e}{(e+t)\ln(e+t)} + C\frac{\ln(C(e+t)\ln(e+t))}{\ln(e+t)} \leq C'.
\end{align*}
Therefore, integrating gives
$$w(t) \leq w(0) + C't \implies Q(t)\leq y(t) \les \jbrac{t}\ln\jbrac{t}.$$

\end{proof}

\begin{remark}
    It might be possible to reduce the power of the logarithmic growth by refining the estimate of the high frequency part in Lemma \ref{loginterp}, which is currently not optimal.
\end{remark}

\subsection{Proof of Corollary \ref{moregrowthdecaybds}}

\emph{Proof of Corollary \ref{moregrowthdecaybds}.} The estimates for $E$ follow from combining the estimate (\ref{repEbd}) in the repulsive case and (\ref{atrEbd}) in the attractive case with the bounds on $Q(t)$ from Proposition \ref{Qbd}. Similarly, the estimates for $V*\rho$ follow from Lemma \ref{potest} with $q=5/3$ and $p=1$ combined with Proposition \ref{repdecay} in the repulsive case and Proposition \ref{atrbd} in the attractive case.\\

Next we prove the $L^\infty$ decay estimate for $\phi(t)$. Using the Gagliardo-Nirenberg interpolation inequality, (\ref{Jconj}), and Propositions \ref{repdecay} and \ref{Hsbd},
\begin{align*}
    \|\phi(t)\|_{L^\infty}=\|M(-t)\phi(t)\|_{L^\infty} &\les \|M(-t)\phi(t)\|_{\dot{H}^s}^\theta \|M(-t)\phi(t)\|_{L^6}^{1-\theta}\\
    &\les t^{-s\theta-\frac12(1-\theta)}\||J|^s\phi(t)\|_{L^2}^\theta\\ 
    &\les \jbrac{t}^{\theta(\frac{s}{3}+\frac12)-\frac12}\ln^{2\theta(\frac{5s}{3}-2)}\jbrac{t},
\end{align*}
where
$$0=\theta\Big(\frac12-\frac{s}{3}\Big)+\frac{1-\theta}{6} \implies \theta = \frac{1}{2(s-1)}.$$
As a consequence,
$$\|\phi(t)\|_{L^\infty}\les \jbrac{t}^{\frac{3-2s}{6(s-1)}}\ln^{\frac{5s-6}{s-1}}\jbrac{t}.$$

Finally, the decay to zero of $L^p$ norms of $\rho$ and $\phi$ in the repulsive case follow from interpolation and conservation of mass/probability. Indeed, for the decay estimate of $\rho$, if $p<5/3$ we interpolate between $\|\rho\|_{L^1}$ and $\|\rho(t)\|_{L^{5/3}}$; the former is conserved and the latter decays. If $p\in (5/3,\infty)$ then for some $\theta\in (0,1)$,
\begin{align*}
    \|\rho(t)\|_{L^p} \les \|\rho(t)\|_{L^{5/3}}^{\theta}\|\rho(t)\|_{L^\infty}^{1-\theta}\les \jbrac{t}^{-3\theta/5}\ln^{6(1-\theta)}\jbrac{t}.
\end{align*}
This converges to zero since $\theta>0$. For $\phi$, if $q<6$ we interpolate between $\|\phi\|_{L^2}$ and $\|\phi(t)\|_{L^6}$, and if $q>6$ we interpolate between $\|\phi(t)\|_{L^6}$ and $\|\phi(t)\|_{L^\infty}$ which both decay as $t\to\infty$.

\printbibliography

\end{document}